\newtheorem{cor}{Corollary}[section]
\newtheorem{lem}{Lemma}[section]
\newtheorem{prop}{Proposition}[section]
\newtheorem{defn}{Definition}[section]
\newtheorem*{thm1}{Theorem 1}
\newtheorem*{thm1'}{Theorem 1'}
\newtheorem*{thm2'}{Theorem 2'}
\newtheorem*{thmB}{Theorem B}
\newtheorem*{thmA}{Theorem A}
\newtheorem*{thmC}{Theorem C}
\newtheorem*{thmD}{Theorem D}
\newtheorem*{thmE}{Theorem E}
\newtheorem*{thmB'}{Theorem B$^\prime$}
\newtheorem*{thmC'}{Theorem C$^\prime$}
\newtheorem*{propnA'}{Proposition A$^\prime$}
\newtheorem*{propnB'}{Proposition B$^\prime$}
\theoremstyle{remark}
\theoremstyle{definition}
\theoremstyle{remark}
\newcommand{\R}{\mathbb{R}}
\newcommand{\Z}{\mathbb{Z}}
\newcommand{\N}{\mathbb{N}}
\newcommand{\PP}{\mathbb{P}}
\newcommand{\EE}{\mathbb{E}}
\newcommand{\RR}{\mathcal{R}}
\newcommand{\FF}{\mathcal{F}}
\newcommand{\mP}{\mathcal{P}}
\newcommand{\mA}{\mathcal{A}}
\newcommand{\mF}{\mathcal{F}}
\newcommand{\QQ}{\underline{\mathcal{Q}}}
\newcommand{\bmR}{\overline{\mathcal{R}}}
\newcommand{\fW}{\mathfrak{W}}
\newcommand{\fw}{\mathfrak{w}}
\newcommand{\fh}{\mathfrak{h}}
\newcommand{\uu}{\underline{u}}
\newcommand{\vv}{\underline{v}}
\newcommand{\ux}{\underline{x}}
\newcommand{\uy}{\underline{y}}
\newcommand{\ub}{\underline{b}}
\newcommand{\uc}{\underline{c}}
\newcommand{\ue}{\underline{e}}
\newcommand{\uh}{\underline{h}}
\newcommand{\of}{\overrightarrow{f}}
\newcommand{\uom}{\underline{\om}}
\newcommand{\uOm}{\underline{\Om}}
\newcommand{\uQ}{\underline{Q}}
\newcommand{\uR}{\underline{R}}
\newcommand{\al}{\alpha}
\newcommand{\be}{\beta}
\newcommand{\la}{\lambda}
\newcommand{\La}{\Lambda}
\newcommand{\de}{\delta}
\newcommand{\De}{\Delta}
\newcommand{\ga}{\gamma}
\newcommand{\om}{\omega}
\newcommand{\Om}{\Omega}
\newcommand{\eps}{\varepsilon}
\newcommand{\si}{\sigma}
\newcommand{\1}{\mathbf{1}}
\newcommand{\subs}{\subseteq}
\newcommand{\ls}{\lesssim}
\newcommand{\prd}{\prod^{\ast}}
\newcommand{\nud}{\nu^{(d)}}
\newcommand{\zero}{\underline{0}}
\newcommand{\yyo}{\uy^{(0)}}
\newcommand{\yyv}{\uy^{(1)}}
\newcommand{\yomv}{y_1^{(\om_1)}}
\newcommand{\yoms}{y_s^{(\om_s)}}
\numberwithin{equation}{section}
\newcommand{\eq}{\begin{equation}
\newcommand{\ee}{\end{equation}}}
\title{Polynomial configurations in dense subsets\\ of the prime lattice}
\author{Andrew Lott, \'Akos Magyar, Giorgis Petridis and J\'anos Pintz}
\begin{document}

\maketitle

\begin{abstract} We provide a multidimensional extension of  previous results on the existence of polynomial progressions in dense subsets of the primes. Let $A$ be a subset of the prime lattice lattice $\PP^d$ of positive relative upper density. We show that $A$ contains all polynomial configurations of the form $\ux+P_0(y)\vv_0,\ldots,\ux+P_l(y)\vv_l$ satisfying a non-degeneracy condition, for some $\ux\in\Z^d$ and $y\in\N$. Moreover, if $A\subs \PP_N^d$ is of upper density $\de>0$ then one  may have that $0<y<\log^L N$ as long as $N$ is sufficiently large, where $L$ is a constant depending on the configuration but is independent of $N$ and $\de$. 

\footnote{The second author was supported by Grant {\#}854813 by the Simons Foundation. The third author was supported by Grant {\#  1723016} by the National Science Foundation. 
The fourth  author was supported by the Hungarian National Research Developments and Innovation Office, NKFIH, 147153 and KKP133819, Excellence 151341}

\footnote{Key words and phrases: primes, prime lattice, polynomial configurations, pseudo-random measure.}

\footnote{2020 {Mathematical Subject Classification: Primary 11N05, Secondary 11N36.}}
\end{abstract}

\section{Introduction} A celebrated result in analytic number theory, due to Green and Tao \cite{GT06}, states that the primes contain arbitrary long arithmetic progressions. In fact they have proved the following. Let $\PP_N$ denote the set of primes up to $N$.

\begin{thmA}\label{thmA} Let $\de>0$, $V=\{v_0,\ldots,v_l\}\subs\Z$. If $N\geq N(\de,V)$ and $A\subs \PP_N$ with $|A|\geq \de |\PP_N|$ then $A$ contains an affine image of $V$ i.e. a set $V'=\{x+yv_0,\ldots,x+yv_l\}$. 
\end{thmA}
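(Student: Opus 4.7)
My plan is to adopt the Green--Tao transference strategy: rather than working directly with the sparse set $\PP_N$, construct a pseudo-random majorant $\nu:[N]\to\R_{\geq 0}$ that dominates a normalization of the prime indicator, and then invoke a \emph{relative Szemer\'edi theorem} locating arithmetic progressions of length $|V|$ inside any subset of positive $\nu$-density. The point is that dense subsets of pseudo-random sets behave, from the viewpoint of linear configurations, as if they were dense subsets of $[N]$.

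The first step is a $W$-trick to remove local obstructions. Let $w=w(N)$ grow slowly, set $W=\prod_{p\leq w}p$, and split $A$ according to residue classes $b\in(\Z/W\Z)^\times$. By pigeonhole at least one class $A_b$ retains relative density $\gtrsim\delta$, and after the rescaling $n\mapsto Wn+b$ the preimage of $A_b$ in $[N/W]$ is no longer biased by small primes. On this rescaled interval I would take the Goldston--Y{\i}ld{\i}r{\i}m truncated divisor sum
\[
\nu(n)=\frac{\phi(W)}{W}\log R\cdot\Bigl(\sum_{d\mid Wn+b}\mu(d)\chi\bigl(\tfrac{\log d}{\log R}\bigr)\Bigr)^2,
\]
with $R=N^{\eta}$ for small $\eta>0$ and $\chi$ a smooth cutoff. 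This nonnegative function dominates a constant multiple of the modified prime indicator and, crucially, has higher moments that can be evaluated precisely.

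The core analytic task is then to verify the two pseudo-randomness hypotheses required by the relative Szemer\'edi theorem, namely a \emph{linear forms condition} and a \emph{correlation condition} adapted to $V$. Expanding the square and Fourier-inverting $\chi$ reduces both to singular-series estimates for products of shifted copies of $Wn+b$; the $W$-trick tames the local factors at small primes, and standard sieve identities handle the rest. Once these conditions are established, the relative Szemer\'edi theorem gives, for any $f:[N/W]\to[0,1]$ with $f\leq C\nu$ and mean at least $\delta'$,
\[
\EE_{x,y}\prod_{i=0}^{l}f(x+yv_i)\geq c(\delta',V)-o(1),
\]
and applying it with $f$ proportional to $\mathbf{1}_{A_b}$ yields a nontrivial affine copy of $V$ once $N$ is large.

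The main obstacle I expect is the relative Szemer\'edi theorem itself, which demands a dense-model/transference decomposition $f=g+h$ with $g:[N/W]\to[0,1]$ of the same mean and $h$ negligible in the appropriate Gowers uniformity norm, so that Szemer\'edi's theorem applied to $g$ transfers to the configuration count for $f$. Verifying the linear forms and correlation conditions for $\nu$ is technically demanding but largely structural: the precise combinatorics of the computation is dictated by $V$ and amounts to bookkeeping of Euler products. The genuinely conceptual step, and the place where I anticipate the most delicate work, is the transference.
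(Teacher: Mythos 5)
Theorem A is not proved in this paper: it is cited as the main result of Green--Tao \cite{GT06}, and your sketch is a faithful high-level outline of exactly that argument ($W$-trick, Goldston--Y{\i}ld{\i}r{\i}m majorant, linear-forms and correlation conditions, dense-model transference, then Szemer\'edi). It also matches in structure the transference framework the paper itself extends to the polynomial, multidimensional setting for Theorem~1, where the linear-forms condition is replaced by the polynomial forms condition of Proposition~\ref{prop2.1} and the Gowers norms by local polynomial box norms.
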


It is easy to see that this implies that any subset of the primes of positive relative upper density contains an affine image of any finite set of integers. In \cite{Tao06} Tao has obtained an analogue of Theorem A for the Gaussian primes and asked if similar results hold for subsets of the prime lattice $\PP^d$. The first result in this direction was provided by B. Cook and the second author \cite{CM12} for finite sets $V\subs\Z^d$ which are in \emph{general position} in the sense that $|\pi_j(V)|=|V|$ for every $1\leq j\leq d$, where $\pi_j:\Z^d\to \Z$ denotes the orthogonal projection to the $j^{th}$-axis. 

\begin{thmB}\label{thmB} Let $d\geq 1,\ \de>0$ and let $V=\{\vv_0,\ldots,\vv_l\}\subs\Z^d$ be a set in general position. If $N\geq N(\de,V)$ and $A\subs \PP_N^d$ with $|A|\geq \de |\PP_N|^d$ then $A$ contains an affine image of $V$ i.e. a set $V'=\{\ux+y\vv_0,\ldots,\ux+y\vv_l\}$.
\end{thmB}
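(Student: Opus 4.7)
The plan is to follow the Green--Tao transference strategy, now adapted to the multidimensional setting of $\PP^d$, and to reduce matters to a density statement on $\Z^d$ that can be handled by the Furstenberg--Katznelson multidimensional Szemer\'edi theorem.

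First I would apply a $W$-trick: fix a slowly growing parameter $w=w(N)$, set $W=\prod_{p\leq w}p$, and partition $\PP_N^d$ into residue classes $\ub+W\Z^d$ with $\gcd(b_j,W)=1$ for every coordinate $j$. A pigeonhole step produces a class in which $A$ retains density $\gs\de$, and after translating and rescaling, the normalised indicator of this pulled-back set $A'\subs [N']^d$ (with $N'\asymp N/W$) takes the form $f:=(\phi(W)/W)^d(\log N)^d\,\1_{A'}$, whose mean is bounded below in terms of $\de$. The next step is to construct a \emph{pseudorandom majorant} $\nu:\Z_{N'}^d\to\R_{\geq 0}$, built as a product over coordinates of Goldston--Yildirim truncated divisor sums, majorising $f$ pointwise and satisfying a \emph{linear forms condition} adapted to the configuration $\{\ux+y\vv_i\}_{i=0}^{l}$.

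The main obstacle is verifying the linear forms condition. Concretely, one needs to show that averages of the form $\EE_{\ux,y}\prod_{i=0}^{l}\nu(\ux+y\vv_i)$, together with all the variants involving subsets of the $\vv_i$ with replicated or shifted variables that enter into the transference argument, equal $1+o(1)$. Because $\nu$ has a tensor-product structure in the coordinates, this reduces to a coordinatewise estimate on the one-dimensional forms $x_j+y\,\pi_j(\vv_i)$, $i=0,\ldots,l$; here the general position hypothesis $|\pi_j(V)|=|V|$ is essential, since it guarantees that these $l+1$ one-dimensional linear forms are pairwise distinct and hence jointly non-degenerate, so that the Goldston--Yildirim local computation yields the required asymptotic.

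Once the linear forms condition is in hand, a now standard dense-model / Koopman--von Neumann decomposition $f=f_1+f_2$ produces a bounded function $f_1$ of density $\gs\de$ together with a Gowers-uniform remainder $f_2$. A quantitative version of Furstenberg--Katznelson applied to $f_1$ yields $\gs (N')^{d+1}$ affine images of $V$ inside the support of $f_1$, while the contribution of $f_2$ to the relevant multilinear average is negligible by a generalised von Neumann inequality controlled by $\nu$. Undoing the $W$-trick then produces an affine image of $V$ in the original set $A\subs\PP_N^d$.
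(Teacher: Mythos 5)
Your outline is essentially the Cook--Magyar transference argument \cite{CM12} that the paper cites as the source of Theorem B, and it is also what the paper's proof of the more general Theorem 1 collapses to in the linear case (where the polynomial forms condition, PET induction and Bergelson--Leibman specialise to the linear forms condition, repeated Cauchy--Schwarz, and Furstenberg--Katznelson, respectively). You correctly isolate the role of the general position hypothesis: $|\pi_j(V)|=|V|$ is precisely what makes the coordinate projections $x_j+y\,\pi_j(\vv_i)$ pairwise non-proportional, so the tensor-product Goldston--Yildirim majorant $\nu=\prod_j\nu_j$ satisfies the needed correlation estimates coordinate-by-coordinate, both in the forms condition and in the shifted systems that arise in the generalized von Neumann step.
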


The full multidimensional extension of Theorem A were obtained independently in \cite{TZ15,CMT18} and \cite{FZ15} where it was shown that the conclusion of Theorem B holds for any finite set $V\subs \Z^d$. 

In \cite{TZ08} Tao and Ziegler has extended Theorem A in another direction, from arithmetic to polynomial progressions which are sets of the form $x+P_0(y),\ldots,x+P_l(y)$, where $\mP=(P_0,\ldots,P_l)$ is a family of integral polynomials. In \cite{TZ14,TZ18} they have provided quantitative bounds on the size of the parameter $y$ measuring the ``size" of the polynomial progression. 

\begin{thmC}\label{thmC} Let $\de>0$, $l\in\N$ and let $\mP=(P_0,\ldots,P_l)$, $P_i\in\Z[x], P_i(0)=0$. If $N\geq N(\de,l,\mP)$ and $A\subs \PP_N$ with $|A|\geq \de |\PP_N|$, then $A$ contains a polynomial progression $\{x+P_0(y),\ldots,x+P_l(y)\}$ for some $x,y\in\N$.

Moreover, one may have $0<y<\log^L N$, where $L=L(l,\mP)$ is a constant depends only the family of polynomials $\mP=(P_0,\ldots,P_l)$.
\end{thmC}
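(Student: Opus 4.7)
The plan is to proceed by a transference principle argument in the spirit of Green--Tao, adapted to polynomial configurations. First, I would perform the standard $W$-trick: set $W=W(N)=\prod_{p\leq w(N)}p$ with $w(N)\to\infty$ slowly (e.g., $w(N)=\log\log N$), and by pigeonholing over invertible residue classes mod $W$, replace $A$ by a set $A'\subs\{n\leq N/W:\ Wn+b\in\PP\}$ of relative density $\ge\de/2$ inside the primes congruent to some fixed $b$ coprime to $W$. This smoothing step kills the bias coming from small prime divisors and makes the primes close to equidistributed at scale $N'\approx N/W$.

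Next, I would build a pseudorandom majorant $\nu:\Z_{N''}\to\R_{\ge 0}$ (with $N''$ a prime slightly larger than $N'$) of a normalised indicator of $\{n\le N':\ Wn+b\in\PP\}$. The natural choice is a truncated Goldston--Yildirim/Selberg sieve weight
\[
\nu(n)\;=\;\frac{\phi(W)}{W}\log R\,\Bigl(\sum_{d\mid Wn+b,\,d\le R}\mu(d)\,\chi\!\bigl(\tfrac{\log d}{\log R}\bigr)\Bigr)^2,
\]
for a smooth cutoff $\chi$ and truncation $R=N^{\eta}$ with $\eta>0$ small. The key technical content is to verify the polynomial linear forms and correlation conditions associated with $\mP$; because the $P_i$ have bounded degree and $y$ ranges over a very short window $M=\log^L N\ll N^{\eta}$, these reduce to singular series and singular integral computations, with error terms $O(w(N)^{-c})$.

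Having $\nu$ in hand, I would invoke a dense model theorem to produce a bounded function $\tilde f:\Z_{N''}\to[0,1]$ of mean $\gs\de$ whose correlations against any bounded family of dual functions agree, up to $o(1)$, with those of $f=(\phi(W)/W)\log N\cdot\1_{A'}$. It then suffices to prove a quantitative polynomial Varnavides-type theorem on short intervals: for every $\tilde f:\Z_{N''}\to[0,1]$ of mean $\ge\de$,
\[
\frac{1}{N''\,M}\sum_{x\in\Z_{N''}}\sum_{1\le y\le M}\prod_{i=0}^{l}\tilde f\!\bigl(x+P_i(y)\bigr)\;\ge\;c(\de,l,\mP)>0,
\]
with $M=\log^L N$. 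This would be attacked via a Koopman--von Neumann decomposition $\tilde f=f_s+f_u$: the structured part $f_s$, essentially a nilsequence of bounded complexity, yields the main term through equidistribution of the polynomial orbit $y\mapsto(P_0(y),\ldots,P_l(y))$ on a nilmanifold, while the uniform part $f_u$ is killed by a polynomial generalised von Neumann inequality controlled by a suitable Gowers $U^{s+1}$-norm. Transferring back through $\nu$ then produces the desired configuration inside $A$ itself.

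The main obstacle is precisely this last step at scale $M=\log^L N$. The sieve estimates for $\nu$, while intricate, are by now standard; the delicate point is the short-interval version of the polynomial Szemer\'edi phenomenon. It requires a short-window inverse theorem for the Gowers norms: if $\tilde f$ has non-negligible $U^{s+1}$-norm along the window $y\le\log^L N$, then $\tilde f$ correlates with a polynomial nilsequence of bounded complexity. Pairing this with a quantitative equidistribution statement for $(P_0(y),\ldots,P_l(y))$ on nilmanifolds at scale $\log^L N$ is what pins down the admissible exponent $L=L(l,\mP)$; essentially all the quantitative difficulty of the theorem is concentrated here, whereas once these two ingredients are available the transference machinery closes the argument in a routine fashion.
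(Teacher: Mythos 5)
Your framework for the transference is essentially the right one and matches the paper's (and Tao--Ziegler's): the $W$-trick, the truncated Goldston--Y\i ld\i r\i m majorant $\nu$ (your version with the square is the correct one; the paper's display \eqref{2.4} appears to have dropped it), the verification of the polynomial forms condition, and the Hahn--Banach dense model theorem producing a bounded $\tilde f$ with matching correlations. Up to that point the two arguments coincide.

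Where you depart is the key final step, and the route you propose is genuinely different from, and substantially heavier than, what the paper does. You want to prove the quantitative short-interval Bergelson--Leibman statement by a Koopman--von Neumann decomposition $\tilde f = f_s + f_u$, a short-window inverse theorem for Gowers norms, and quantitative equidistribution of $(P_0(y),\ldots,P_l(y))$ on nilmanifolds. The paper does none of this. Instead it (i) runs a PET induction (Bergelson--Leibman's scheme extended to $\nu$-bounded functions, Proposition \ref{prop3.1} and Corollary \ref{cor3.1}) to control $\La_{\mP,V}(f_0,\ldots,f_l)$ by a single polynomial average of \emph{local box norms} $\|\cdot\|_{\Box_{H,M}(\uQ_1,\ldots,\uQ_s)}$; (ii) proves the dense-model decomposition $f = g + \fh$ with $0\leq g\leq 2$ and $\|\fh\|_{\Box_{H,M}}\leq\eps$; and (iii) invokes the short-interval Bergelson--Leibman theorem for bounded functions (Theorem E) as a black box. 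That last input is obtained in \cite{TZ08} by a soft counting/pigeonhole argument from the ordinary multidimensional polynomial Szemer\'edi theorem; no inverse theory or nilmanifolds enter at all. Your route would, in principle, yield explicit bounds on $c(\de)$ --- which the paper explicitly leaves as an open ``Further directions'' item --- but it requires quantitative polynomial inverse theorems that are far from routine; the paper's argument deliberately trades that away.

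One concrete misattribution worth correcting: you write that pairing the inverse theorem with nilmanifold equidistribution ``is what pins down the admissible exponent $L$.'' In the paper's argument $L$ has nothing to do with equidistribution; it is determined entirely by the PET induction / van der Corput step, where one needs $M=\log^L N$ and $H=\log^{\sqrt L}N$ with $L$ large enough (depending only on the degrees and the number of polynomials in $\mP$) so that the shift errors $O((\log N)^{C}\,H/M)$ and the Schwarz--Zippel losses from degenerate $\uh$ are $o(1)$. This is why $L=L(l,\mP)$ comes out independent of $\de$ with no need for the concatenation theorems of \cite{TZ18} in the paper's formulation; all the $\de$-dependence is absorbed in the non-effective constant $c(\de)$ of Theorem~E.
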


Note that the first statement in Theorem C was proved in \cite{TZ08} while the bound  $0<y<\log^L N$ with $L=L(l,P,\de)$ depending also on the density $\de$ was given in \cite{TZ14}. The dependence on $\de$ was removed in \cite{TZ18} by estimating polynomial averages of local Gowers norms by global ones via so-called concatenation theorems.\\

Corresponding lower bounds were also discussed in \cite{TZ14}, extending a construction in \cite{GPY09}, showing that $y\gg \log^l N$ is necessary, even for linear case, when $P_i(y)=iy$ for $0\leq i\leq l$.\\

Our aim in this note is twofold. On the one hand we show that Theorem C can be extended to the multi-dimensional setting for polynomial configurations that satisfy a similar non-degeneracy condition as given in Theorem B for affine linear configurations. On the other hand we'd like to present this result in an essentially self contained manner, relying only on a sieve theoretic estimate (see \eqref{2.6} below) and a quantitative version of a theorem of Bergelson-Leibman \cite{BL96} on polynomial configurations in dense subsets of the integer lattice. 

\begin{defn}\label{defn1.1} Let $l\geq 1$, $V=\{\vv_0,\ldots,\vv_l\}\subs\Z^d$ be a finite set and let $\mP=(P_1,\ldots,P_l)$ be a family of integral polynomials. We say that the polynomial configuration $\mP_V=(P_0(y)\vv_0,\ldots,P_l(y)\vv_l)$ is in general position, if for all $0\leq j< j'\leq l$ and $1\leq i\leq d$, 
\eq\label{1.1}
\deg\,\big(\pi_i(P_j(y)\,\vv_j - P_{j'}(y)\,\vv_{j'})\big) = 
\deg\,(P_j(y)\,\vv_j - P_{j'}(y)\,\vv_{j'}),
\ee
as a polynomial in $y$, $\pi_i$ being the natural projection to the $i^{th}$ coordinate axis.
\end{defn}

Our main result is the following. 

\begin{thm1}\label{thm1}
Let $\de>0$, $d,l\in\N$ and let $\mP_V=(P_0(y)\vv_0,\ldots,P_l(y)\vv_l)$ with $P_j\in\Z[y], P_j(0)=0$ be a polynomial configuration in general position. If $N\geq N(\de,\mP,V)$ and $A\subs \PP_N^d$ with $|A|\geq \de |\PP_N^d|$, then there exist $x,y \in\N$ with  $0<y<\log^L N$, such that
\eq\label{1.2}
\{\ux+P_0(y)\vv_0,\ldots,\ux+P_l(y)\vv_l\}\subs A,
\ee
where $L=L(\mP,V)$ is a constant depending only on initial data $\mP,V$.
\end{thm1}

\underline{Remarks.}

\begin{itemize}
 \item In dimension $d=1$, taking $v_0=\ldots =v_l=1$, Theorem 1 reduces to Theorem C.\\

\item If the set $V=\{\vv_0,\ldots,\vv_l\}$ is in general position then for any polynomial $P\in\Z[y]$, $P(0)=0$ the polynomial configuration $\mP_V:=(P(y)\vv_0,\ldots,P(y)\vv_l)$ is also in general position, hence Theorem 1 extends Theorem B.\\

\item If the polynomials $P_0,\ldots,P_l$ have distinct degrees then the pattern $\mP_V$ is in general position, as long as $\pi_i(\vv_j)\neq 0$ for all $1\leq j\leq l$ and $1\leq i\leq d$.\\


\item If $A=A_1^d$ where $A_1\subs \PP$ is of positive relative upper density, then Theorem 1 one follows from the 1-dimensional result of Tao-Ziegler \cite{TZ14} as the polynomial pattern $\mP_V(y)=\{P_0(y)\vv_0,\ldots,P_l(y)\vv_l\}$ is contained in the $d$-fold direct product of the polynomial progression $\mP(y):=\{\pi_i(P_j(y)\,\vv_j):\ 1\leq j\leq l,\ 1\leq i\leq d\}$.\\

\item In general, condition \eqref{1.1} is equivalent the following. If the leading term of $P_j(y)\,\vv_j - P_{j'}(y)\,\vv_{j'}$ is of the form $\uc_{jj'}\, y^{d_{jj'}}$ then $\pi_i(\uc_{jj'})\neq 0$ for all $1\leq i\leq d$ and $0\leq j<j'\leq l$.
\end{itemize}

\bigskip

Our approach follows that of \cite{TZ08,TZ14} and roughly consists of two parts. The starting point is to control polynomial averages of the form:
\eq\label{1.3}
\La_{\mP,V}(f_0,\ldots,f_l):=\EE_{x\in X}\EE_{y\in [M]} 
f_0(\ux+P_0(y)\vv_0)\ldots f_l(\ux+P_l(y)\vv_l),
\ee
where $X=(\Z/N\Z)^d$, $M:=\log^L N$, and we used the averaging notation $\EE_{a\in A} f(a):=\frac{1}{|A|} \sum_{a\in A} f(a)$.
The first part is to bound these averages in terms of polynomial averages of certain Gowers type local box norms. Recall that given an $s$-tuple of vectors $\uu_1,\ldots,\uu_s\in\Z^d$, we define the corresponding local box norm at scale $1\leq M\leq N$, as 
\eq\label{1.4}
\|f\|^{2^s}_{\Box_M(\uu_1,\ldots,\uu_s)}:= \EE_{\ux\in X}
\EE_{y_1^{(0)},\ldots,y_s^{(0)},y_1^{(1)},\ldots,y_s^{(1)}\in [M]} \prod_{\uom\in\{0,1\}^s} f(\ux+y_1^{(\om_1)}\uu_1+\ldots +y_s^{(\om_s)}\uu_s)
\ee
For bounded functions, when say $|f_i|\leq 1$, the so-called PET induction scheme of Bergelson-Leibman \cite{BL96} estimates the averages in \eqref{1.3} via an polynomial averages of box norms, namely one has
\eq\label{1.5}
|\La_{\mP,V}(f_0,\ldots,f_l)|\ls \min_{1\leq j\leq s}
\big(\EE_{\underline{h}\in [H]^t} \|f_j\|^{2^s}_{\Box_M(\uQ_1(\uh),\ldots,\uQ_s(\uh)}\big)^{\frac{1}{2^s}}+o(1),
\ee
where $\uQ_1,\ldots,\uQ_s:\Z^t\to\Z^d$ is a family of integral non-constant polynomial maps, and $H:=\log^{\sqrt{L}} N$ is a parameter, also referred to as the ``fine scale" \cite{TZ14}. By $o(1)$ we will denote a quantity that goes to $0$ as $N\to\infty$, that may depend on the initial configuration $\mP,V$ and parameters $d,l$.\\

In one dimension it was shown in \cite{TZ08} that such estimates remain true when the functions $f_i$ are not bounded uniformly in $N$, but bounded by a function $\nu_N$, referred to a pseudo-random measure \cite{GT06, TZ08}, that behaves very nicely with respect to polynomial type averages satisfying the so-called polynomial forms condition \cite[Definition 3.6]{TZ08}, described later. In our multidimensional setting it will be crucial to assume that the initial polynomial configuration $\mP_V$ is in general position in order to verify estimate \eqref{1.5} for a family of polynomials $\QQ=(\uQ_1,\ldots,\uQ_s)$ which also remain in general position.\\

The second part of the argument, is a transference argument of \cite{TZ14} adjusted to the multi-dimensional setting to obtain a decomposition of a function $f\geq 0$ bounded by a pseudo-random measure $\nud=\nud_N$ with respect to a given small quantity $\eps>0$. More precisely, to show that given a function $f:\Z^n\to\R$ satisfying $0\leq f\leq \nud$, one has the decomposition
\eq\label{1.6}
f=g+h,\quad0\leq g\leq 2\quad\textit{and}\quad \|h\|_{\Box_{H,M}(\uQ_1,\ldots,\uQ_s)}\leq\eps,
\ee
where we have used the notation,
\eq\label{1.7}
\|h\|_{\Box_{H,M}(\uQ_1,\ldots,\uQ_s)} := 
\big(\EE_{\uh\in [H]^t} \|f_j\|^{2^s}_{\Box_M(\uQ_1(\uh),\ldots,\uQ_s(\uh)}\big)
^{\frac{1}{2^s}}.
\ee
Note that both $\|\ \|_{\Box_M(\uu_1,\ldots,\uu_s)}$ and $\|\ \|_{\Box_{H,M}(\uQ_1,\ldots,\uQ_s)}$ are indeed norms.\\

The idea to use estimates like \eqref{1.5}-\eqref{1.6} to prove the existence of arithmetic progressions in the primes  originated and was crucial in \cite{GT06}, and later also in \cite{TZ08,TZ14} for polynomial progressions. In our case starting from a set $A\subs[N]^d$ of size $|A|\geq \de\, |\PP_N|^d$ one construct a function $f_A:[N]^d\to\R_{\geq 0}$ so that $\EE_{\ux\in [N]^d} f_A(\ux)\geq c_0\,\de$ (for some absolute constant $c_0>0$) and $0\leq f_A(\ux)\leq \log^{(d)}(\ux)$ with $\log^{(d)}(\ux)=\prod_{i=1}^d \log\,x_i$, simply by writing $f_A(\ux)=\1_A(\ux) \log^{(d)}(\ux)$. Then one may truncate the support of $f_A$ to $[\sqrt{N},N-\sqrt{N}]^d$ which changes its average only by a negligible amount, and then one may identifies $[N]^d$ with $X=(\Z/N\Z)^d$ in estimating the averages $\La_{\mP,V}(f_A,\ldots,f_A)$ as the total size of the polynomial configuration is $O(\log^{CL} N)$
which is much less that $\sqrt{N}$ thus there no``wraparound" issues.\\

If one can find a pseudo-random measure $\nu^{(d)}_N(\ux)\gg \log^{(d)}(\ux)$ for which \eqref{1.5} and \eqref{1.6} holds for functions $|f_i|\leq \nu^{(d)}_N$ and $0\leq f\leq \nu^{(d)}_N$, then by multi-linearity one has
\eq\label{1.8}
\La_{\mP,V}(f_A,\ldots,f_A)=\La_{\mP,V}(g,\ldots,g)+O(\eps).
\ee
Invoking the multi-dimensional polynomial extension of Szemer\'edi's theorem by Bergelson-Liebman \cite{BL96}, more precisely its more quantitative version given in \cite{TZ08} one has that 
\eq\label{1.9}
\La_{\mP,V}(g,\ldots,g)\geq c(\de),
\ee
for some constant $c(\de)>0$ for sufficiently large $N$, that may also depend on the initial data $\mP,V$. Choosing $\eps>0$ much smaller than $c(\de)$ it follows that
\eq\label{1.10}
\La_{\mP,V}(f_A,\ldots,f_A)>0,
\ee
which implies the existence of configurations $\ux+P_0(y),\ldots,\ux+P_l(y)$ in $A$ with $0<y<\log^L N$.\\

This cannot be done due to the irregularity of distribution of primes in small residue classes, however is possible after restricting $A$, and hence the support of $f_A$, onto a residue class $\ux\equiv\underline{b}\pmod{W}$, where $W=\prod_{p\leq w} p$ is the product of small primes $p\leq w$, with $w=w(N)$ is a function that is a sufficiently slowly growing to infinity with $N$, i.e. after applying the so-called $W$-trick \cite{GT06}.

\subsection*{Further directions.} If the polynomials $P_0,\ldots,P_l$ have distinct degrees it is expected that there are good explicit lower bounds for the constants $c(\de)$ in estimate \eqref{1.9}. In fact, in light of the recent quantitative breakthrough results the in one dimension by Peluse \cite{Pel20}, by Peluse-Prendiville-Shao \cite{PPS24},  Kravitz-Kuca-Leng \cite{KKL24a,KKL24b} and by Kosz-Mirek-Peluse-Wright \cite{KMPW24} it is plausible that one may take $c(\de) =\exp\exp\,(-\de^{-C})$ or equivalently $\de= (\log\log\,N)^{-c}$. The various $o(1)=o_{N\to\infty}(1)$ error terms in our arguments arise from the error term in Proposition \ref{prop2.1}, the so-called polynomial forms condition, are obtained via purely on number theoretic considerations are of $O((\log\log\log\log\,N)^{-1})$, see Section 2. Thus it is expected that similar poly-logarithmic bounds can be obtained on the density $\de$ of sets $A\subs\PP_N^d$.\\

We expect that our main result holds in full generality, without the assumption that the polynomial configuration $\mathcal{P}_V(y)$ being in general position, at least without the restriction $y \leq \log^L N$ on the "size" of the configuration. If the polynomials $P_1(y),\ldots,P_l(y)$
have the same degree $d$ but different main terms then such results may be obtained using the very recent quantitative result of Matthiesen-Ter\"{a}v\"{a}vainen-Wang \cite{MTW24} on the asymptotic distribution of polynomial progressions $\{x+P_1(y),\ldots,x+P_l(y)\}$ in $\PP_N$, combined with the sampling argument of Fox-Zhao \cite{FZ15}. Indeed, then one one may allow patterns whose projections to the coordinate axis forms such polynomial progressions, for example configurations of the form 
$\mathcal{P}_V(y)=\{P_{11}(y)\ue_1,\ldots,P_{l_1 1}(y)\ue_1,\ldots,P_{1d}(y)\ue_d,\ldots,P_{l_d d}(y)\ue_d\}$ where $\ue_1,\ldots,\ue_d$ being the standard basis vectors of $\R^d$. However, to extend this approach to polynomials whose degrees may not be the same, or to "small'' configurations one would need to prove some instances of the Bateman-Horn conjectures \cite{BH62} with are currently out of reach.

\section{The pseudo-random majorant and the polynomial forms condition.}

We follow \cite{TZ14} to define the the weighted indicator function of the set $A\subs \PP_{N'}$ truncated and restricted to an appropriate reduced residue class $\ux\equiv \ub \pmod{W}$. Let $w=w(N')=\frac{1}{10}\log\log\log\,N'$ and let $W=\prod_{p\leq w}$ ($p$ prime) and note that $W\ll (\log\log\,N')^{1/10}$ by the prime number theorem. We will set $N:=[N/W]$.\\ 

If $A\subs\PP_{N'}$ such that $|A|\geq \de |\PP_{N'}|$ then by pigeonhole principle, we can choose $\ub=(b_1,\ldots,b_d)$ so that $(b_i,W)=1$ for all $1\leq i\leq d$ and 
\eq\label{2.1}
|\{\ux\in[\sqrt{N},N-\sqrt{N}]^d,\ W\ux+\ub\in A\}| \geq 
\frac{\de}{2} \left(\frac{NW}{\phi(W)\log\,N}\right)^d,
\ee
for sufficiently large $N'$, using again the prime number theorem in arithmetic progressions. Define the function $f_A:[N]^d\to\R_{\geq 0}$,
\eq\label{2.2}
f_A (\ux) :=  \left(c_0\,\frac{\phi(W)\log\,N}{W}\right)^d\,\1_A(W\ux+\ub),
\ee
if $\ux\in [\sqrt{N},N-\sqrt{N}]^d$ and is equal to zero otherwise. Then by \eqref{2.1} we have that 
\eq\label{2.3}
\EE_{\ux\in [N]^d} f_A(\ux) = N^{-d}\sum_{\ux\in [N]^d} f_A(\ux) \geq \frac{c_0\de}{4},
\ee
where we used the averaging notation $\EE_{b\in B} f(b):=\frac{1}{|B|}\sum_{b\in B} f(b)$. Here $c_0>0$ is an absolute constant needed to be able to majorize the function $f_A(\ux)$ by the function $\nu^{(d)}(\ux)=\prod_{i=1}^d \nu_i(x_i)$, where is the $\nu$ pseudo-random measure used originally in \cite{TZ08,TZ14}, namely 
\eq\label{2.4}
\nu_{i}(x) := \frac{\phi(W)\log\,R}{W}\left(\sum_{d|Wx+b_i} \mu(d)\chi \left(\frac{\log\,d}{\log\,R}\right)\right),
\ee
for $x\in[N]$, where $R=N^{\eps_0}$, $\mu$ is the M\"{o}bius function and $\chi\in C^\infty(-1,1)$ is a smooth even function satisfying $\int_0^1 |\chi'(t)|^2 dt=1$ and $\chi(0)\geq 1/2$. It is easy to see that if we choose $c_0$ small enough, say $c_0=\frac{\eps_0}{10}$, then we have
\eq\label{2.5}
0\leq f_A(\ux)\leq \nu^{(d)}(\ux)\quad\textit{for all}\quad \ux\in X.
\ee
the pseudo-randomness property of the product measure $\nu^{(d)}$ we need is a multi-dimensional analogue of the so-called polynomial forms condition of Tao-Ziegler \cite[Proposition 3]{TZ14}, based on the following correlation estimate proved for the measure $\nu=\nu_b$ (uniformly for $b$ satisfying $(b,W)=1$). Let $h_1,\ldots,h_J$ be integers (not necessarily distinct) of size $h_j=O(\sqrt{N})$, then 
\eq\label{2.6}
\EE_{x\in [N]}\ \prod_{i=1}^J \nu(x+h_i) = 1+ o(1) +
O\bigg(\,Exp\,\bigg(\,O\,\bigg(\sum_{1\leq i<i'\leq I}\  
\sum_{\substack{w\leq p\leq R^{\log\,R}\\ p|h_i-h_{i'}}}\frac{1}{p}\bigg)\bigg)\bigg),
\ee
where $Exp (x)=e^x-1$, as long as $\eps_0$ is sufficiently small with respect to $J$.\\

\begin{defn} Let $\QQ=(\uQ_1,\ldots,\uQ_J):\Z^t\to\Z^d$ be a polynomial map. We say that the map $\QQ$ is non-degenerate if 
\eq\label{2.7}
\deg\,(\pi_i(\uQ_j-\uQ_{j'}))\geq 1,
\ee
for all $1\leq j<j'\leq J$, $1\leq i\leq d$, where $\pi_i$ is the natural projection the $i^{th}$ coordinate axis.
\end{defn}

Note in the special case $\QQ=\mP_V$ the notion of being non-degenerate is weaker then the notion of being in general position given in Definition \ref{defn1.1}. However this weaker notion is enough to derive the following key property of polynomial maps.

\begin{prop}\label{prop2.1} Let $t,d,D,J$ be fixed natural numbers, let $\eps_0$ sufficiently small and $L$ sufficiently large depending on $t,d,D,I$. Let $\QQ=(\uQ_1,\ldots,\uQ_J):\Z^t\to\Z^d$ be a non-degenerate integral polynomial map of degree at most $D$, with coefficients of size $O(W^C)$. Then
\eq\label{2.8}
\EE_{\uh\in [H]^t}\ \EE_{\ux\in X} \prod_{j=1}^J \nu^{(d)}(\ux+\uQ_j(\uh)) 
= 1+o(1),
\ee
where $H=\log^{\sqrt{L}} N$ and $X=(\Z/N\Z)^d\simeq [N]^d$.
\end{prop}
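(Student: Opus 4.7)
The plan is to reduce the $d$-dimensional estimate \eqref{2.8} to the one-dimensional correlation bound \eqref{2.6} by exploiting the tensor-product structure $\nu^{(d)}(\ux)=\prod_{i=1}^{d}\nu_i(x_i)$, and then to show that the resulting error is negligible on average over $\uh\in[H]^t$ thanks to the non-degeneracy of $\QQ$. For fixed $\uh$, Fubini factorizes the inner average,
\begin{equation*}
\EE_{\ux\in X}\prod_{j=1}^{J}\nu^{(d)}(\ux+\uQ_j(\uh))=\prod_{i=1}^{d}\EE_{x_i\in[N]}\prod_{j=1}^{J}\nu_i\bigl(x_i+\pi_i(\uQ_j(\uh))\bigr),
\end{equation*}
and the shifts obey $|\pi_i(\uQ_j(\uh))|\ll W^{C}H^{D}\ll(\log N)^{D\sqrt{L}}\ll\sqrt{N}$, so each inner expectation equals $1+o(1)+E_i(\uh)$ by \eqref{2.6}, where
\begin{equation*}
E_i(\uh)=O\!\left(\mathrm{Exp}\!\left(O\!\left(\sum_{1\le j<j'\le J}\;\sum_{\substack{w\le p\le R^{\log R}\\ p\mid\pi_i(\uQ_j(\uh)-\uQ_{j'}(\uh))}}\frac{1}{p}\right)\right)\right).
\end{equation*}

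Multiplying over $i$ and expanding gives $\prod_i(1+o(1)+E_i(\uh))=1+o(1)+R(\uh)$, where every summand of $R(\uh)$ contains at least one factor $E_i(\uh)$. Combining Cauchy--Schwarz with a crude pointwise bound $\nu_i\ll(\log N)^{O(1)}$ (which holds for Selberg-type sieve majorants) reduces $\EE_\uh R(\uh)=o(1)$ to proving that $\EE_\uh|E_i(\uh)|=o(1)$ for each $i$. Non-degeneracy enters here: for each $i$ and each pair $j\neq j'$, the polynomial $P_{ijj'}(\uh):=\pi_i(\uQ_j-\uQ_{j'})(\uh)$ is non-constant, of degree $\le D$, with coefficients of size $O(W^{C})$. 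Since $W^{C}\ll(\log\log N)^{C/10}<w\le p$, the reduction $P_{ijj'}\bmod p$ is a non-zero polynomial over $\F_p$, and a standard Schwartz--Zippel count gives $\PP_{\uh\in[H]^t}(p\mid P_{ijj'}(\uh))\ll_{D,t}p^{-1}+H^{-1}$, so that
\begin{equation*}
\EE_{\uh}\sum_{w\le p\le R^{\log R}}\frac{\1_{p\mid P_{ijj'}(\uh)}}{p}\ll\sum_{p\ge w}\frac{1}{p^{2}}+\frac{\log\log R}{H}=o(1).
\end{equation*}

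The main obstacle is upgrading this first-moment bound on the inner sum $S_i(\uh)$ to the exponential-moment bound $\EE_\uh\mathrm{Exp}(O(S_i(\uh)))=o(1)$ needed to estimate $E_i(\uh)$, since $S_i$ can occasionally be as large as $O(\log\log R)$, ruling out a direct application of Jensen's inequality. The remedy, following \cite{TZ08,TZ14}, is to split $[H]^t$ into a typical set on which $S_i(\uh)=O(1)$ uniformly (so $\mathrm{Exp}(O(S_i))\ll S_i$ and the first-moment bound suffices) and an exceptional set of negligible density, the latter controlled by higher moments of $S_i$ obtained by iterating the divisibility count together with the Chinese Remainder Theorem, which provides approximate independence of the events $\{p\mid P_{ijj'}(\uh)\}$ across distinct primes. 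Choosing $L$ sufficiently large ensures that $H$ is big enough for these counts to be effective while still keeping the shifts below $\sqrt{N}$.
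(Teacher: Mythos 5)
You follow essentially the same route as the paper: tensorize the inner average into coordinate-wise factors, apply the one-dimensional correlation estimate \eqref{2.6}, and then average the resulting error over $\uh\in[H]^t$, using non-degeneracy together with a dyadic decomposition in $p$ (Schwartz--Zippel for moderate primes, a divisor count for large primes) to bound the first moment $\EE_\uh\sum_{p\mid\pi_i(\uQ_j(\uh)-\uQ_{j'}(\uh))}1/p$. One difference in bookkeeping: you propose handling the product over coordinates $i$ via Cauchy--Schwarz plus the crude pointwise bound $\nu_i\ll(\log N)^{O(1)}$; this is unnecessarily lossy (it amplifies the error by $\log^{O(1)}N$, which is far larger than the $O(1/\log w)$ first-moment estimate you obtain), whereas the paper's Lemma 2.1 exploits the exact identity $1+Exp(x)=e^x$ to get $\prod_i(1+o(1)+C\,Exp(\al_i))\le 1+o(1)+C^d\sum_i Exp(2^d\al_i)$, which reduces to a single coordinate essentially without loss.

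The more substantive point you raise is correct and worth keeping: the quantity $\al_i(\uh)$ is not pointwise bounded (it can be as large as $\log\log\log H-\log\log w$ when $\De Q(\uh)$ has many small prime factors), so the first-moment bound $\EE_\uh\al_{j,j',i}(\uh)=o(1)$ does not by itself yield $\EE_\uh Exp(O(\al_i(\uh)))=o(1)$ --- a Markov-plus-integration argument diverges. The paper's written proof stops at the first-moment estimate \eqref{2.12}, so it implicitly relies on the standard upgrade (controlling higher moments via the approximate multiplicativity of divisibility by distinct primes, coming from the Chinese Remainder Theorem and Schwartz--Zippel, or equivalently the typical/exceptional-set split you describe, as in \cite{TZ08,TZ14}). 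Your proposal is thus, if anything, more explicit than the paper at precisely the one delicate step of the argument.
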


The following simple estimates involving the $Exp$ function will be useful in proving Proposition \ref{prop1}.

\begin{lem} Let $C_i>1$ and $\al_i>0$ for $1\leq i\leq d$. Then one has for sufficiently large $N$,

\eq\label{2.9} 
\prod_{j=1}^d (1+o(1)+C_i Exp\,(\al_i)\leq 1+o(1)+ \prod_{j=1}^d C_i\,Exp\,(\al_1+\ldots +\al_d)
\ee
\eq\label{2.10}
Exp\,(\al_1+\ldots +\al_d)\leq Exp\, (2^d\al_1)+\ldots +Exp\,(2^d\al_d).
\ee
\eq\label{2.11}
\prod_{j=1}^d (1+o(1)+C\,Exp\,(\al_i))\leq 1+o(1)+C^d \sum_{j=1}^d Exp\, (2^d\al_j)\quad (C>1).
\ee
\end{lem}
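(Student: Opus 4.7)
The plan is to prove the three estimates in the order (2.10) $\to$ (2.11) $\to$ (2.9), since (2.10) is the core analytic fact and the other two follow from it by a binomial expansion of the product on the left.

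For (2.10), set $\al_{*} := \max_{1 \le j \le d} \al_j \ge 0$. Since $d \le 2^d$ one has $\al_1 + \cdots + \al_d \le d\,\al_{*} \le 2^d \al_{*}$, hence
\[
Exp(\al_1 + \cdots + \al_d) \;=\; e^{\al_1 + \cdots + \al_d} - 1 \;\le\; e^{2^d \al_{*}} - 1 \;=\; Exp(2^d \al_{*}).
\]
Since the remaining $d-1$ summands on the right of (2.10) are non-negative, $\sum_{j=1}^d Exp(2^d \al_j) \ge Exp(2^d \al_{*})$, completing (2.10).

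For (2.9) and (2.11), I binomially expand the $d$-fold product as
\[
\prod_{j=1}^d \bigl(1 + o(1) + C_j\, Exp(\al_j)\bigr) \;=\; \sum_{S \subseteq \{1,\ldots,d\}} (1+o(1))^{d-|S|} \Bigl(\prod_{j \in S} C_j\Bigr) \prod_{j \in S} Exp(\al_j).
\]
The $S = \emptyset$ term equals $(1+o(1))^d = 1 + o(1)$, which supplies the $1 + o(1)$ on the right. For the non-empty subsets the central ingredient is the submultiplicative inequality
\[
\prod_{j \in S} Exp(\al_j) \;\le\; Exp\Bigl(\sum_{j \in S} \al_j\Bigr) \;\le\; Exp\Bigl(\sum_{j=1}^d \al_j\Bigr),
\]
which follows by induction on $|S|$ from the two-variable identity $(e^x - 1)(e^y - 1) = e^{x+y} - e^x - e^y + 1 \le e^{x+y} - 1$, valid for $x,y \ge 0$. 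Equivalently, $\sum_{S \ne \emptyset} \prod_{j \in S} Exp(\al_j) = \prod_j (1 + Exp(\al_j)) - 1 = Exp(\al_1 + \cdots + \al_d)$. Combined with $\prod_{j \in S} C_j \le \prod_{j=1}^d C_j$ (for (2.9)) or $C^{|S|} \le C^d$ (for (2.11)), this immediately yields (2.9), and (2.11) then follows by invoking (2.10) to pass from $Exp(\sum_j \al_j)$ to $\sum_j Exp(2^d \al_j)$.

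The only real point of friction is the bookkeeping for the $2^d - 1$ non-empty subsets of $\{1,\ldots,d\}$ together with the factors $(1+o(1))^{d - |S|}$; since $d$ is a fixed integer and every term in sight is non-negative, these harmless constants can be absorbed into either the $1 + o(1)$ or the $C^d$ on the right-hand side without creating any genuine difficulty.
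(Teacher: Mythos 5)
Your proof is correct and takes a genuinely different route from the paper. For \eqref{2.10} the paper sets $u_i = Exp(\al_i)$, verifies the case $d=2$ by the elementary inequality $u_1+u_2+u_1u_2 \le 2u_1+2u_2+u_1^2+u_2^2$, and then appeals to induction; your argument via $\al_1+\cdots+\al_d \le d\,\max_j\al_j \le 2^d\max_j\al_j$ and monotonicity of $Exp$ is more direct and avoids the induction entirely. For \eqref{2.9} the paper again does an explicit $d=2$ calculation, using the identity $Exp(\al_1)Exp(\al_2)=Exp(\al_1+\al_2)-Exp(\al_1)-Exp(\al_2)$ together with $C_i>1$ to discard the resulting negative terms, and then inducts; your subset expansion combined with the exact identity $\sum_{S\ne\emptyset}\prod_{j\in S}Exp(\al_j)=\prod_j e^{\al_j}-1=Exp(\al_1+\cdots+\al_d)$ reorganizes the same algebra in a way that makes the full-$d$ statement transparent at once. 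Both proofs ultimately hinge on $C_i>1$ to absorb the loss. The one imprecise spot is your closing paragraph: the stray $(1+o(1))^{d-|S|}$ factors on the nonempty-$S$ terms cannot be pushed into the additive $1+o(1)$ on the right (the $Exp$ factors are unbounded), so you really do need the strict slack $\prod_{j\in S}C_j < \prod_{j=1}^d C_j$ for $|S|<d$ (available precisely because $C_j>1$) to swallow those factors once $N$ is large; making that one line explicit would close the gap, and it is the same use of $C_i>1$ that the paper makes when it drops the terms $C_1(1+o(1)-C_2)Exp(\al_1)$ and $C_2(1+o(1)-C_1)Exp(\al_2)$.
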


\begin{proof} For $d=2$, one has
\begin{align*}
&\ (1+o(1)+C_1 Exp\,(\al_1))\,(1+o(1)+C_2 Exp\,(\al_2))\\
&= 
1+o(1)+C_1(1+o(1))\,Exp\,(\al_1)+C_2(1+o(1))\,Exp\,(\al_2)+
C_1C_2\, Exp\,(\al_1)Exp\,(\al_2)\\
&= 1+o(1)+C_1C_2\,Exp\,(\al_1+\al_2)+C_1(1+o(1)-C_2)\,Exp\,(\al_1)
+C_2(1+o(1)-C_1)\,Exp\,(\al_2)\\
&\leq 1+o(1)+C_1C_2\, Exp\,(\al_1+\al_2),
\end{align*}
using $ Exp\,(\al_1)Exp\,(\al_2)=Exp\,(\al_1+\al_2)-Exp\,(\al_1)-Exp\,(\al_2)$. Then \eqref{2.9} follows by induction.

Writing $Exp\,(\al_i)=u_i$ for $i=1,2$, so $e^{\al_i}=1+u_i$, one has 
\[
Exp\,(\al_1+\al_2)=u_1+u_2+u_1u_2\leq 2u_1+2u_2+u_1^2+u_2^2=Exp\,(2\al_1)+Exp\,(2\al_2).
\]
Then \eqref{2.10} follows by induction on $d$ and \eqref{2.11} follows immediately from \eqref{2.9} and \eqref{2.10}.
\end{proof}

\begin{proof} [Proof of Proposition \ref{prop1}] We have by \eqref{2.6} and \eqref{2.11}
\begin{align*}
&\EE_{\ux\in X} \prod_{j=1}^J \nu^{(d)}(\ux+\uQ_j(\uh)) =
\prod_{i=1}^d \bigg(\EE_{x_i} \prod_{j=1}^J \nu_i(x_i+\pi_i(\uQ_j(\uh)))\,\bigg)\\
&\leq \prod_{i=1}^d \big(1+o(1)+C\,Exp\,(C\,\al_i(\uh)\big)
\leq 1+o(1) +C^d\,\sum_{i=1}^d Exp\,(C\,2^d\,\al_i(\uh)),
\end{align*}
where
\[\al_i(\uh)=\sum_{1\leq j<j'\leq I}\al_{j,j',i},\quad\textit{with}
\quad \al_{j,j',i}(\uh)=
\sum_{\substack{w\leq p\leq R^{\log\,R},\\\\ p|\pi_i(\uQ_j(\uh)-\uQ_{j'}(\uh))}}\,\frac{1}{p}.
\]

\medskip

Thus to prove \eqref{2.7} it is enough to show that for fixed $1\leq j<j'\leq I$ and $1\leq i\leq d$, one has

\eq\label{2.12}
\EE_{\uh\in [H]^t} \sum_{\substack{w\leq p\leq R^{\log\,R},\\\\ p|\pi_j(\uQ_i(\uh)-\uQ_{i'}(\uh))}}\,\frac{1}{p} = o(1).
\ee

\medskip

Note that by our assumptions the polynomial $\De Q(\uh):=\pi_i(\uQ_j(\uh)-\uQ_{j'}(\uh))$ has degree at least 1 and at most D.\\ 

To estimate the above sum first consider set $B$ of the primes $p\geq w$ for which the polynomial $\De Q$ identically vanishes$\pmod{p}$. Since all such primes $p$ must divide the coefficients of $\De Q$ which are of size $O(W^C)$ we have
\[w^{|B|}\leq \prod_{p\in B} p \leq W^C \ll e^{C\,w},\]
by the prime number theorem, thus 
\[\sum_{p\in B}\ \frac{1}{p}\, \leq\, \frac{|B|}{w}\, \ll \frac{1}{\log\,w}=o(1).\]

Next, we partition the sum in $p\notin B$ into dyadic intervals $2^k\leq p<2^{k+1}$, and estimate the contribution of each part depending the size of the parameter $k$.\\

Assume first that $2^{k/2}\leq \log\,H = \sqrt{L}\,\log\log\,N$. Since the polynomial $\De Q$ has degree at least 1 $\pmod{p}$, and $p$ is much smaller than $H$, the number of $\uh\in [H]^t$ satisfying $p|\De Q(\uh)$ is at most $H^t/p+O(1)$, thus 
\eq\label{2.13}
\EE_{\uh\in [H]^t} \sum_{2^k\leq p < 2^{k+1},\, p|\De Q(\uh)}\ 
\frac{1}{p}\ \leq\ \frac{1}{2^k} + O(H^{-1}).
\ee
Summing this for $2^k\geq w$ is at most $O(1/w)=o(1)$.\\

Assume now $2^{k/2} \geq \log H$. Then left side of \eqref{2.13} is estimated by,

\[\frac{1}{2^k}\ \EE_{\uh\in [H]^t}\,|\{2^k\leq p <2^{k+1};
\ p|\De Q(\uh)\}| \leq \frac{D\log\,H}{k\,2^k}+O(H^{-1})\ll \frac{1}{k2^{k/2}}+o(1).
\]

\medskip

Here we used the well-known fact that $|\{\uh\in [H]^t, \De Q(\uh)=0\}|=O(H^{t-1})$ and if $1\leq |\De Q(\uh)| \leq H^D$ then 
$\ \prod_{p|\De Q(\uh)} p\leq H^D\ $ thus the number of primes $p\geq 2^k$, $p|\De Q(\uh)$ is $O(D\log H/k)$.\\ 
This proves \eqref{2.12} and Proposition \ref{prop2.1} follows.
\end{proof}

Note that the $o(1)$ error term we obtained is $O(1/\log\,w)$ which is $O((\log\log\log\log\,N)^{-1})$ with the implicit constant may depending on the initial parameters $d,D,J$ but is independent of $t$.

\section{PET induction and the generalized von Neumann inequality.}

We describe the PET induction scheme originally devised by Bergelson and Leibman \cite{BL96} for bounded function and later extended by Tao and Ziegler to functions bounded by a pseudo-random measure $\nu$. We'll follow the notation of \cite{TZ08} except that in our multi-dimensional setting we will have $l$ shift operators in the directions of the vector $\vv_1,\ldots,\vv_l$, namely $T_j f(\ux):=f(\ux+\vv_j)$ for $1\leq j\leq l$.
We will estimate averages of the form 
\eq\label{3.1}
\La_{\mP}(\of) =\EE_{\ux\in X}\EE_{y\in [M]} \prod_{j=1}^d T_j^{P_j(y)} f_j(\ux),
\ee
for a family of functions $\of=(f_1,\ldots,f_l)$ satisfying $|f_j|\leq \nud$,
using repeated applications of van der Courput's lemma \cite{TZ08} which in our  context is the following simple observation. Let $x_n$ be a sequence of real numbers satisfying $x_n=O(\log^C N)$ uniformly in $n$. Let $M=\log^L N$ and $H=\log^{\sqrt{L}} N$ with $L$ sufficiently large with respect $C$. Then,
\eq\label{3.2}
\EE_{n\in [M]}\ x_n =\EE_{h\in H} \EE_{n\in [M]}\ x_{n+h}+o(1),
\ee
as shifting the average by a small amount $h$ will change it only slightly. Then by the Cauchy-Schwarz inequality,
\eq\label{3.3}
|\EE_{n\in [M]}\ x_n|^2 \leq \EE_{n\in [M]}\EE_{h,h'\in [H]}\  x_{n+h}\,x_{n+h'}+o(1),
\ee

After each application we will arrive at a new system of polynomials some of which will have reduced degrees in the $y$-variable or applied to the measure $\nud$ in place of a function $f_j$. Eventually, one arrives at a system where each polynomial corresponding to a shift applied to one of the functions $f_j$ is linear in $y$ but may depend on the number of additional small parameters $h_1,\ldots,h_t$. Averages over such systems are then majorized by polynomial averages of local box norms, described in the introduction.\\

\underline{Example}. Let us first illustrate the procedure with an example. Let $d=2$, $T_i f(\ux):= f(x+\vv_i)$ for $i=1,2$.
\[
\La(\of)=\EE_{\ux}\, \EE_y\ f_0\cdot T_1^y f_1\cdot T_2^{y^2} f_2,
\]
where $\ux\in X$, $y\in [M]$, and $|f_i|\leq \nu:=\nu^{(2)}$. We assume that $\vv_i$ has no zero coordinates so the system $\mP(y)=(\underline{0},y\vv_1,y^2\vv_2)$ is in general position. Then,
\[
|\La(\of)|\leq \EE_{\ux}\, \nu\, |\EE_y\, T_1^y f_1\cdot T_2^{y^2} f_2|,
\]
hence by van der Courput's lemma
\[
|\La(\of)|^2 \leq \EE_{\ux}\, \nu\, \EE_{h,h'} \EE_y
\,T_1^{y+h}f_1\cdot T_1^{y+h'}f_1\cdot T_2^{(y+h)^2}f_2\cdot T_2^{(y+h')^2}f_2 +o(1)
\]
Now we shift the variables $\ux$ to $\ux-y\vv_1$ which does not change the average but which amounts to multiplying each factor with $T_1^{-y}$, thus we get
\[
|\La(\of)|^2 \leq \EE_{\ux}\, \EE_{h,h'}\, T_1^h\nu\cdot T_1^{h'}\nu\ 
|\EE_y\, T_1^{-y}\nu\cdot T_1^{-y}T_2^{(y+h)^2}f_2\cdot T_1^{-y}T_2^{(y+h')^2}f_2| +o(1).
\]
By the polynomial forms condition $\EE_{\ux} \EE_{h,h'}\, T_1^h\nu\cdot T_1^{h'}\nu=1+o(1)$ hence after one more application of van der Courput's lemma,
\begin{align*}
|\La(\of)|^4 &\leq \EE_{\ux}\, \EE_{h,h'}\,\EE_{k,k'}\,T_1^h\nu\cdot T_1^{h'}\nu\ 
\ \EE_y\  T_1^{-y-k}\nu \cdot T_1^{-y-k'}\nu \cdot\\
&\cdot T_1^{-y-k}T_2^{(y+h+k)^2}f_2 \cdot T_1^{-y-k}T_2^{(y+h'+k)^2}f_2 
\cdot T_1^{-y-k'}T_2^{(y+h+k')^2}f_2\cdot T_1^{-y-k'}T_2^{(y+h'+k')^2}f_2
+o(1).\end{align*}
Note that there are four quadratic polynomials in $y$ however each have the same main term $y^2$ thus after shifting the variables $\ux$ to $\ux-y^2\vv_2$,  which amounts to multiplying each factor with $T_2^{-y^2}$, all exponents of the shift operators $T_1,T_2$ applied to the function $f_2$ will be linear. \\

Doing this procedure in general leads to polynomial averages of the form

\eq\label{3.4}
\La_{\RR}(\of):= \EE_{h_1,\ldots,h_t}\EE_{\ux}\ \EE_{y\in [M]}
\prod_{\al\in\mA} T^{\uR_\al(y,h_1,\ldots,h_t;W)} f_\al (\ux),
\ee
where 
\eq\label{3.5}
\uR_\al(y,h_1,\ldots,h_t;W) =\sum_{j=1}^l \,
R_{j,\al}(y,h_1,\ldots,h_t;W)\,\vv_j,
\ee
and
\begin{align}\label{3.6}
T^{\uR_{\al} (y,h_1,\ldots,h_t;W)} f_\al (\ux) 
&:= \prod_{j=1}^l T_j^{R_{j,\al}\,(y,h_1,\ldots,h_t;W)} f_\al (\ux)\\
&= f_\al (\ux + \uR_\al(y,h_1,\ldots,h_t;W)),\nonumber
\end{align}

using notation similar to \cite{BL96,TZ08}. For simplicity of notation we will wite $\uR_\al(\uh,y)$ suppressing the dependence on $W$. To describe the PET procedure in our settings we will need several definitions.\\ 


Let $\RR:=\{\uR_\al(y,\uh):\ \al\in \mA\}$ be a polynomial system with $\uR_\al\in\Z^d[y,h_1,\ldots,h_t]$ being an integral polynomial map for all $\al\in\mA$. We will consider $y$ as the primary variable of the polynomial maps $\uR_\al(y,\uh)$ and $\uh=(h_1,\ldots,h_t)$ as parameters. We say that the $y$-degree of a polynomial map $\uR(y,\uh)$ is equal to $d$ and write $\deg_y\,\uR(y,\uh)=d$, if 
\[
\uR(y,\uh)=\uc(\uh)\,y^d + \uQ(y,\uh),
\]
where $\uc(\uh)\neq \underline{0}$ and $\deg_y\,(\uQ(y,\uh))<d$, and $\uQ(y,\uh)=\underline{0}$ if $d=0$. We extend the notion of \emph{general position} to polynomial systems $\RR:=\{\uR_\al(y,\uh):\,\al\in \mA\}$ depending on parameters $\uh=(h_1,\ldots,h_t)$ as follows.

\begin{defn} We say that a polynomial system $\RR:=\{\uR_\al(y,\uh):\ \al\in \mA\}$ is in general position if for any two distinct nodes $\al,\be$, we have 
\eq\label{3.7}
\deg_y\,\big( \pi_i (\uR_\al-\uR_\be)\big) = 
\deg_y\,(\uR_\al-\uR_\be) \quad{for\ all}\ \ 1\leq i\leq d.
\ee
Moreover, if $\deg_y\,(\uR_\al-\uR_\be)=0$ then $\pi_i(\uR_\al)\neq \pi_i(\uR_\be)$.
\end{defn}

\medskip

For given nodes $\al,\be\in\mA$ let 
$d_{\al\be}:= \deg_y\,(\uR_\al(y,\uh)-\uR_\be(y,\uh))$
and write 
\eq\label{3.8}
\uR_{\al}(y,\uh)-\uR_{\be}(y,\uh) =\uc_{\al\be}(\uh)\,y^{d_{\al\be}} + \uQ_{\al\be}(y,\uh),
\ee
that is $\uc_{\al\be}(\uh)\neq\underline{0}$ is the coefficient of leading term of the polynomial $\uR_\al(y,\uh)-\uR_\be(y,\uh)$ in the $y$-variable. If $d_{\al\be}=0$, then we set $\uQ_{\al\be}=\underline{0}$. 
Then \eqref{3.7} is equivalent to
\eq\label{3.9}
\pi_i(\uc_{\al\be}(\uh))\neq 0\quad\textit{for all}
\quad 1\leq i\leq d,\ \ \al,\be\in\mA,\ \ \al\neq\be. 
\ee


An important note is that starting from an initial polynomial configuration $\mP(y)=(P_1(y)\vv_1,\ldots,P_l(y)\vv_l)$ in general position, all polynomial systems $\{\uR_\al(\uh,y):\,\al\in\mA\}$ will remain in general position during the PET procedure, i.e. they will satisfy \eqref{3.7} or equivalently \eqref{3.9}. To formalize this observation, write $\mA=\mA_0 \cup \mA_1$ where $\mA_0=\{\al:\ \deg_y\, \uR_\al(y,\uh)=0\}$, and define the \emph{doubling} of the system $\RR$ as the polynomial system:
\eq\label{3.10}
\RR^\prime =\{\uR_\be (\uh):\ \be\in\mA_0\}
\cup\{\uR_\be (y+h^1,\uh):\ \be\in\mA_1\}
\cup\{\uR_\be (y+h^2,\uh):\ \be\in\mA_1\}.
\ee
We index this system with the set of nodes $\mA':=\mA_0\cup\mA_1^1\cup\mA_1^2$, so that for nodes  $\be^1\in\mA_1^1$ and $\be^2\in\mA_1^2$ there correspond the polynomials
\eq\label{3.11}
\uR_{\be^1}(y,\uh,h^1) = \uR_\be (y+h^1,\uh),\ \ 
\uR_{\be^2}(y,\uh,h^2) = \uR_\be (y+h^2,\uh).
\ee

\medskip

\begin{lem}\label{lem3.1} Let $\RR=\{\uR_\al(\uh,y)):\,\al\in\mA\}$ be a polynomial system in general position. Then the its doubling, that is the system $\RR'$ defined in \eqref{3.10}-\eqref{3.11} is also in general position. 
\end{lem}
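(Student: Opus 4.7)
The plan is to verify the general position of $\mathcal{R}'$ by case analysis on where two distinct nodes of $\mathcal{A}' = \mathcal{A}_0 \cup \mathcal{A}_1^1 \cup \mathcal{A}_1^2$ lie, computing in each case the $y$-leading coefficient of the difference of the corresponding polynomials (now depending on the enlarged parameter set $(\underline{h}, h^1, h^2)$ with $y$ still primary) and showing that each coordinate projection is a nonzero polynomial.

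Three of the four cases are essentially immediate. If both nodes lie in $\mathcal{A}_0$, the polynomials are unchanged by doubling and the ``moreover'' clause of the hypothesis applies verbatim. If one node lies in $\mathcal{A}_0$ and the other in some $\mathcal{A}_1^k$, the difference has $y$-degree $d = \deg_y \underline{R}_\beta$ with leading coefficient $-\underline{a}_d$, the top $y$-coefficient of $\underline{R}_\beta$, and general position of $\mathcal{R}$ applied to the pair $(\alpha, \beta)$ already provides $\pi_i(\underline{a}_d) \neq 0$ for every $i$. If both nodes lie in the same $\mathcal{A}_1^k$, the difference equals $(\underline{R}_\beta - \underline{R}_\gamma)(y + h^k)$; shifting in $y$ preserves both the $y$-degree and the leading $y$-coefficient, so general position on $(\beta, \gamma)$ transfers directly.

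The heart of the argument is the remaining case, with one node in $\mathcal{A}_1^1$ and the other in $\mathcal{A}_1^2$, which I would split by whether the underlying nodes $\beta, \gamma \in \mathcal{A}_1$ agree. When $\beta = \gamma$, expanding $\underline{R}_\beta(y+h^1) - \underline{R}_\beta(y+h^2)$ shows the $y^d$ terms cancel and the top surviving power is $d\,\underline{a}_d\,(h^1 - h^2)\,y^{d-1}$, with $d = \deg_y \underline{R}_\beta \geq 1$. When $\beta \neq \gamma$, either the $y^d$-coefficients differ, in which case the new leading $y$-coefficient is $\underline{a}_d - \underline{b}_d$ and general position on $(\beta, \gamma)$ closes the case, or they coincide and the same expansion yields a $y^{d-1}$-coefficient of the form $(\underline{a}_{d-1} - \underline{b}_{d-1}) + d\,\underline{a}_d\,(h^1 - h^2)$, whose $(h^1 - h^2)$-linear contribution is visible in each projection provided $\pi_i(\underline{a}_d) \neq 0$.

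The main obstacle is this last sub-case: the required nondegeneracy depends on the \emph{individual} leading $y$-coefficient $\underline{a}_d$ of $\underline{R}_\beta$, rather than on any difference of polynomials in $\mathcal{R}$. The resolution is to use the invariant, preserved by doubling because $\underline{0} \in \mathcal{A}_0 \subseteq \mathcal{A}'$, that the zero polynomial always belongs to the system (as the $f_0$-node sitting at $\underline{x}$ throughout the PET procedure): general position applied to the pair $(\underline{0}, \beta)$ forces $\deg_y(\pi_i(\underline{R}_\beta)) = \deg_y \underline{R}_\beta$, which is precisely the condition $\pi_i(\underline{a}_d) \neq 0$ needed to close the cross case.
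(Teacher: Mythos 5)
Your argument follows the paper's proof essentially verbatim: the same case analysis on where the two nodes of $\mathcal{A}' = \mathcal{A}_0 \cup \mathcal{A}_1^1 \cup \mathcal{A}_1^2$ lie, and the same crucial step of extracting $\pi_i(\underline{a}_d)\neq 0$ for each \emph{individual} leading $y$-coefficient in order to handle the cross $\mathcal{A}_1^1$-versus-$\mathcal{A}_1^2$ sub-case where the top coefficients agree. The only cosmetic difference is how that individual nondegeneracy is justified: the paper normalizes WLOG so that $\underline{R}_{\alpha_0}=\underline{0}$, invoking the translation-invariance of condition (3.7), whereas you appeal to the persistence of the $f_0$-node through the PET procedure; both amount to applying general position against a zero polynomial sitting in $\mathcal{A}_0$.
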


\begin{proof} We may assume without loss of generality that $\uR_{\al_0}=\underline{0}$ for some fixed node $\al_0$, as equation \eqref{3.7} is invariant under the translation $\uR_\al\to \uR_\al-\uR_{\al_0} \ \ (\al\in \mA)$. We write 

\[\uR_\al(y,\uh) =\uc_{\al}(\uh)\,y^{d_\al} + \uQ_{\al}(y,\uh),
\]
\\$\ $
where $\ d_\al = \deg_y\,\uR_\al(y,\uh)\,$ and $\ \uQ_{\al}(y,\uh)=0$ if $d_\al=\,0$, that is when $\al\in \mA_0$. By \eqref{3.9}, we have

\eq\label{3.12}
\pi_i(\uc_{\al}(\uh))\neq 0,\quad\textit{for all}\ \ 1\leq i\leq d.
\ee
\\$\ $
Let $\al,\be\in \mA\backslash{\{0\}}$ be two distinct nodes. If both nodes are in $\mA_0$, then $\uR'_\al=\uR_\al$ and $\uR'_\be=\uR_\be$ so \eqref{3.9} clearly holds for $\uR'_\al$ and $\uR'_\be$. If say $\al\in \mA_1^\si$ with $\si=1,2$, but $\be\in\mA_0$ then $d_\al>0$ and $d_\be=0$. Thus, 
\[
\uR'_{\al^\si}(y,\uh,h^\si)-\uR'_\be(y,\uh)=
\uR_\al(y+h^\si,\uh)-\uR_\be(\uh)=\uc_\al(\uh)y^{d_\al}+
\uQ'_{\al\be}(y,\uh,h^\si),
\]
\\$\ $
with $\deg_y\,(\uQ'_{\al\be}(y,\uh,h^\si))<d_\al$ and $\eqref{3.9}$ holds.\\

Assume now that $\al$ and $\be$ are both in $\mA_1$, and consider the difference 
\eq\label{3.13}
\uR'_{\al^\si}(y,\uh,h^\si)-\uR'_{\be^\tau}(y,\uh,h^\tau)=
\uR_\al(y+h^\si,\uh)-\uR_\be(y+h^\tau,\uh),
\ee
for $\si,\tau=1,2$. First we show that 
\[
\uR'_{\al^\si}(y,\uh,h^\si)-\uR'_{\be^\tau}(y,\uh,h^\tau)= \uc_{\al\be}(\uh)\,y^{d_{\al\be}} +
\uQ'_{\al^\si\be^\tau}(y,\uh),
\]
with $\deg_y\,(\uQ'_{\al^\si\be^\tau})(y,\uh)<d_{\al\be}$ in all cases, except when $d_\al=d_\be$, $\,\uc_\al(\uh)=\uc_\be(\uh)$ and $\si\neq\tau$.\\ 

Indeed, if $d_\al\neq d_\be$, say $d_\al>d_\be$ then the main term in the $y$-variable of the expression in \eqref{3.13} is $\uc_{\al}(\uh)\,y^{d_\al}=\uc_{\al\be}(\uh)\,y^{d_{\al\be}}$, thus \eqref{3.9} holds.\\

If $d_\al=d_\be$ and $\si=\tau$ then the expression in \eqref{3.13} takes the form

\begin{align*}
\uR_\al(y+h^\si,\uh)-\uR_\be(y+h^\si,\uh) &= \uc_{\al\be}(\uh)\,(y+h^\si)^{d_{\al\be}} +
\uQ_{\al\be}(y+h^\si,\uh)\\
&= \uc_{\al\be}(\uh)\,y^{d_{\al\be}} + \uQ'_{\al\be}(y,\uh,h^\si),
\end{align*}
\\$\ $
where $\deg_y\,(\uQ'_{\al\be}(y,\uh,h^\si))<d_{\al\be}$ and \eqref{3.9} holds again.\\

If $d_\al=d_be$,  $\si\neq\tau$ and $\uc_\al(\uh)\neq \uc_\be(\uh)$ then its is easy to see from \eqref{3.8} that $\uc_{\al\be}(\uh)=\uc_\al(\uh)-\uc_\be(\uh)$ and $d_{\al\be}=d_\al=d_\be$. In this case 
\begin{align*}
\uR_\al(y+h^\si,\uh)-\uR_\be(y+h^\tau,\uh) &= 
\uc_{\al}(\uh)\,(y+h^\si)^{d_\al} - \uc_{\be}(\uh)\,(y+h^\tau)^{d_\al} +
\uQ'_{\al\be}(y,\uh,h^1,h^2)\\
&= (\uc_{\al}(\uh)-\uc_{\be}(\uh))\,y^{d_\al} + \uQ''_{\al\be}(y,\uh,h^1,h^2),
\end{align*}
where $\deg_y\,(\uQ''_{\al\be}(y,\uh,h^1,h^2))<d_{\al}$ and \eqref{3.9} holds.\\

Finally, if $d_\al=d_\be$, $\,\uc_\al(\uh)=\uc_\be(\uh)\,$ but $\si\neq\tau$, then we use the expansion
\[\uR_\al (y,\uh)=\uc_\al(\uh)y^{d_\al} + \underline{e}_\al(\uh)y^{d_\al-1}+
\uQ_\al(y,\uh),
\]
where $\deg_y\,(\uQ_\al (y,\uh))<d_\al-1$ or $\uQ_\al(y,\uh)=0$ if $d_\al=1$. We have 

\begin{align*}
 &\ \uR_\al(y+h^\si,\uh)-\uR_\be(y+h^\tau,\uh) =\\
 &= \uc_{\al}(\uh)\,\big((y+h^\si)^{d_\al} - (y+h^\tau)^{d_\al}\big)+
\underline{e}_\al(\uh)(y+h^\si)^{d_\al-1}
-\underline{e}_\be(\uh)(y+h^\tau)^{d_\al-1}
+\uQ'_{\al\be}(y,\uh,h^1,h^2)\nonumber\\
&= \big(d_\al\,\uc_{\al}(\uh)(h^\si-h^\tau) + 
\underline{e}_\al(\uh)-\underline{e}_\be(\uh)\big)\,y^{d_\al-1} + 
\uQ''_{\al\be}(y,\uh,h^1,h^2),\nonumber
\end{align*}

where $\ \deg_y\,(\uQ_{\al\be}''(y,\uh))<d_\al-1\ $ or $\ \uQ''_{\al\be}(y,\uh)=0$ if $d_\al=1$. Applying the projection $\pi_i$ to the main term in the above expression, we have for all $1\leq i\leq d$,
\eq\label{3.14}
d_\al\,\pi_i(\uc_{\al}(\uh))\,(h^\si-h^\tau) + 
\pi_i(\underline{e}_\al(\uh)-\underline{e}_\be(\uh)) \neq 0.
\ee

Indeed, by our assumption $\pi_i(\uc_\al(\uh))\neq 0$, hence the first term in \eqref{3.14} depends on the new parameters $h^1$ and $h^2$ while the second term does not. Thus \eqref{3.9} holds again for the polynomials $\uR'_{\al^\si}(y,\uh,h^\si)$ and $\uR'_{\be^\tau}(y,\uh,h^\tau)$.\\
\end{proof}


We will fix a distinguished node $\al_0$ and we say that a node $\al$ is \emph{non-linear} if $\deg_y\, (\uR_{\al}-\uR_{\al_0})>1\,$ and \emph{linear} if $\deg_y\, (\uR_{\al}-\uR_{\al_0})=1\,$. We say that system $\RR$ is in \emph{general position w.r.t. $\al_0$}, if it is in general position and if for any two \emph{distinct linear nodes} $\al,\be$,
\eq\label{3.15}\deg_y\,(\uR_\al-\uR_\be) = 1.\ee

\begin{lem}\label{lem3.2} Let $\RR=$ be a polynomial system with distinguished node $\al_0$ and a nonlinear node $\al^\ast$. Then the its doubling, that is the system $\RR'$ defined in \eqref{3.10}-\eqref{3.11} is also in general position with respect to the distinguished node $\al_0^1$.
\end{lem}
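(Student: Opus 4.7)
The strategy is to reduce to Lemma \ref{lem3.1} and then to verify the additional strong linearity condition \eqref{3.15} for $\RR'$ with respect to the new distinguished node $\al_0^1$. By Lemma \ref{lem3.1} the doubling $\RR'$ already satisfies the base general-position condition \eqref{3.7}--\eqref{3.9}, so the only remaining task is: for any two \emph{distinct} nodes $\al',\be'\in\mA'$ that are linear with respect to $\al_0^1$ (meaning $\deg_y(\uR_{\al'}-\uR_{\al_0^1})=\deg_y(\uR_{\be'}-\uR_{\al_0^1})=1$), confirm that $\deg_y(\uR_{\al'}-\uR_{\be'})=1$ as well.

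The first step is to compute the relative degree $\deg_y(\uR_{\be'}-\uR_{\al_0^1})$ for each node type in $\mA'=\mA_0\cup\mA_1^1\cup\mA_1^2$. Writing $d^*:=\deg_y\uR_{\al_0}$ (so necessarily $\al_0\in\mA_1$ and $d^*\geq 1$) and expanding $\uR_{\al_0}(y+h^j,\uh)$ along the lines of the proof of Lemma \ref{lem3.1}, a short calculation yields: every $\be\in\mA_0$ has relative degree $d^*$; every $\be^1\in\mA_1^1$ with $\be\neq\al_0$ has relative degree $d_{\al_0\be}$ equal to its original degree in $\RR$; the node $\al_0^2$ has relative degree $d^*-1$, because the leading $\uc_{\al_0}(\uh)\,y^{d^*}$ terms cancel between $\uR_{\al_0}(y+h^2,\uh)$ and $\uR_{\al_0}(y+h^1,\uh)$; and every $\be^2\in\mA_1^2$ with $\be\neq\al_0$ has relative degree $d_{\al_0\be}$, dropping to $d^*-1$ precisely when $d_\be=d^*$ and $\uc_\be(\uh)=\uc_{\al_0}(\uh)$. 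A node is then linear with respect to $\al_0^1$ exactly when the corresponding quantity equals $1$.

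The easy pairings follow immediately from the strong linearity \eqref{3.15} for the original system $\RR$. For two linear nodes $\al^1,\be^1\in\mA_1^1$ one has $\uR_{\al^1}-\uR_{\be^1}=(\uR_\al-\uR_\be)(y+h^1,\uh)$, so the $y$-degree is preserved and equals $1$ by hypothesis on $\RR$; the case of two linear nodes in $\mA_1^2$ is symmetric; and two distinct linear nodes in $\mA_0$ (which could only arise when $d^*=1$) are ruled out, since their difference has $y$-degree $0$, contradicting \eqref{3.15} applied to $\RR$. Mixed pairings between an $\mA_0$-node and a linear $\mA_1^\si$-node reduce similarly after writing the difference polynomial out explicitly.

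The main obstacle, and the part requiring the most care, is the pairings in which at least one of the linear nodes is $\al_0^2$ (which requires $d^*=2$) or some $\be^2\in\mA_1^2$ whose relative degree has dropped to $d^*-1=1$ via cancellation of the top coefficient with $\uR_{\al_0}$. In these cases the difference takes the form $\uR_\al(y+h^\si,\uh)-\uR_\be(y+h^\tau,\uh)$ with $\si\neq\tau$, and one must track the subleading coefficients $\underline{e}_\al(\uh)$ exactly as at the end of the proof of Lemma \ref{lem3.1}. The new leading $y^{d^*-1}$-coefficient contains a contribution of the shape $d^*\,\uc_\al(\uh)(h^\si-h^\tau)$, which is nonzero under every projection $\pi_i$ thanks to $\pi_i(\uc_\al(\uh))\neq 0$ from \eqref{3.9} for $\RR$ together with $h^\si\neq h^\tau$, exactly as in \eqref{3.14}. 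The hypothesis that $\RR$ contains a nonlinear node $\al^*$ is what makes the doubling step meaningful in the first place (ensuring $\mA_1\neq\emptyset$ and allowing $d^*\geq 2$ for the above cancellations to occur); completing the enumeration of pairs then yields the desired condition \eqref{3.15} for $\RR'$ relative to $\al_0^1$.
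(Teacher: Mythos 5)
Your proof takes essentially the same approach as the paper's: reduce to Lemma~\ref{lem3.1} so that only the refinement condition \eqref{3.15} remains to be checked, and then argue case by case using the fact that $\partial_y\uR_{\al_0}(y+h^\si,\uh)$ genuinely depends on $h^\si$ (because $\deg_y\uR_{\al_0}\geq 2$ after the normalization $\uR_{\al^\ast}=0$), so that differences across $\si\neq\tau$ pick up a $y$-coefficient of the shape $d^\ast\uc_{\al_0}(\uh)(h^\si-h^\tau)$ that cannot cancel against $\uh$-only terms. You are in fact somewhat more explicit than the paper in enumerating which nodes of $\RR'$ are linear with respect to $\al_0^1$ (noting $\al_0^2$ when $d^\ast=2$ and the ``dropped-degree'' $\be^2$'s), whereas the paper only sets up $\be,\ga$ as linear nodes of the \emph{original} $\RR$ and leaves $\al_0^2$ implicit. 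One small slip: in your ``main obstacle'' paragraph you assert that these pairings always come with $\si\neq\tau$, but $\al_0^2$ can be paired with another $\mA_1^2$-node, giving $\si=\tau=2$; for that pairing the shift $y\mapsto y+h^2$ preserves degree, so one should argue directly that $\deg_y\big(\uR_{\al_0}(y+h^2,\uh)-\uR_\be(y+h^2,\uh)\big)=d_{\al_0\be}$ rather than via the $(h^\si-h^\tau)$ contribution. This organizational imprecision mirrors a similar terseness in the paper's own proof and does not change the substance of the argument.
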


\begin{proof} Without loss of generality we may assume $R_{\al^\ast}=0$ and hence $\deg R_{\al_0}\geq 2$. 
By Lemma \ref{lem3.1} it is enough to show that the main terms of the linear nodes with respect to the node $\al_0^1$ remain distinct in the new system $\RR'$. Let $\be,\ga$ be linear nodes w.r.t. $\al_0$. Write
\[\uR_\be(y,\uh)=\uR_{\al_0}(y,\uh)+{\uc}_\be(\uh)y+{\ue}_\be(\uh)\quad \textit{and similarly}\quad \uR_\ga(y,\uh)=\uR_{\al_0}(y,\uh)+\uc_\ga(\uh)y+\ue_\ga(\uh).
\]
Clearly, $\be,\ga$ both in $\mA_1$. If $\be\neq \ga$ then $\uc_\be\neq \uc_\ga$ thus clearly $\uR_\be(y+h^\si,\uh)-\uR_\ga(y+h^\si,\uh)$ is not constant in $y$ for $\si=1,2$. Now consider
\[
\uR_\be(y+h^1,\uh)-\uR_\ga(y+h^2,\uh) = 
\uR_{\al_0}(y+h^1,\uh)-\uR_{\al_0}(y+h^2,\uh)+(\uc_\be(\uh)-\uc_\ga(\uh))y+\ue_{\be\ga}(\uh).
\]
Taking a derivative w.r.t the $y$-variable, we have
\[
\partial_y (\uR_\be(y+h^1,\uh)-\uR_\ga(y+h^2,\uh)) =
\partial_y (\uR_{\al_0}(y+h^1,\uh)-\uR_{\al_0}(y+h^2,\uh))+(\uc_\be-\uc_\ga)(\uh)\neq \underline{0},
\]
as the above expression must depend on both $h^1$ and $h^2$ as $\al_0$ is a non-linear node. If $\be=\ga$ then we have again,
\[
\partial_y (\uR'_{\be^1}(y+h^1,\uh)-\uR'_{\be^2}(y+h^2,\uh))
=\partial_y (\uR_{\al_0}(y+h^1,\uh)-\uR_{\al_0}(y+h^2,\uh))\neq \underline{0}.
\]
\end{proof}


\medskip
 
Following \cite{BL96} we will assign a \emph{weight matrix} $\fW_\RR =\big(\fw_{j,k}\big)$ to a polynomial system 
$\,\RR= \{\uR_{\al}(\uh,y):\,\al\in\mA\}\,$ as follows. Let $\ \RR_j:=(R_{j,\al};\,\al\in\mA)\,$ and define $\,\bmR_j$ to be the set of those polynomials $\ R_{j,\al}(\uh,y)$ which satisfy
\eq\label{3.16}
\deg_y\,(R_{\al,j})\geq 1,\ \ but\ \ 
\deg_y\,(R_{j',\al})=0,\quad for\ all\ \ j<j'\leq l.
\ee
Let $D$ denote the maximum degree in $y$ of the polynomials $R_{j,\al}(y,\uh)$. For given $1\leq j\leq l$ and $1\leq k\leq D$ define the entry $\fw_{j,k}$ as the number of equivalence classes of the 
polynomials $R_{j,\al}\in\bmR_j$ of $y$-degree $k$, where two polynomials $R_{j,\al}$ and $R_{j,\al'}$ are equivalent if they have identical leading terms in the $y$-variable. The \emph{weight matrix} $\fW_\RR(\al^\ast) =\big(\fw_{j,k}(\al^\ast)\big)$ of the system $\ (\uR_{\al}:\,\al\in\mA)\,$ with respect to a specific node $\al^\ast$ is defined as the weight matrix of the shifted system 
$\ \{\uR_{\al}-\uR_{\al^\ast}:\,\al\in\mA\}\,$. Note that in the scalar case $l=1$ (and $v_1=1$) this agrees with definition of the weight vector $(w_k(\al^\ast))_{1\leq k\leq D}$ given in \cite{TZ08}.\\

We will estimate averages of the form \eqref{3.4} corresponding to systems $\ (\RR,\of):=\{(\uR_\al,f_\al):\,\al\in \mA\}$ where $|f_\al|\leq \nud $ for all $\al\in \mA$. 
We define a node $\al$ \emph{inactive} if $f_\al=\nud$ and \emph{active} otherwise. The \emph{weight matrix} $\fW_{\RR,\of}(\al^\ast)\,$ of the system $\,(\RR,\of)$ is defined as the weight matrix of the restricted polynomial system 
$(\uR_\al-\uR_{\al^\ast}:\,\al\in\mA_1)$, where $\mA_1$ denotes the set of active nodes.\\

We define the ordering of $l\times D$ weight matrices by reversed lexicographic ordering; we write $\fW'\prec \fW$ if there exist $1\leq j\leq l\leq l$, $1\leq k\leq D$ so that $\fw_{j,k}' < \fw_{j,k}$ but $\fw_{j',k'}'=\fw_{j',k'}'$ for $j<j'$ or $j=j'$ and $k<k'$. We remark that any ordered chain of weight matrices $\,\fW_1 \succ \fW_2\succ\ldots$ terminates in finitely many steps.\\


The key proposition of the PET procedure, which is a straightforward multidimensional extension of Proposition 5.14 in \cite{TZ08}, is the following.

\begin{prop}\label{prop3.1} Let $(\RR,\of)=\{(\uR_\al,f_\al):\,\al\in \mA\}$ be a polynomial system in general position with distinguished node $\al_0$. Assume the system $\RR$ is in general position with respect to $\al_0$, and there is an active non-linear node $\al^\ast$.\\
Then there exists a polynomial system 
$(\RR^\prime ,\of') =\{(\uR_{\al'}^{\prime},f'_{\al'}):\, \al'\in \mA'\}$ 
in general position with respect to a distinguished node $\al_0'$, and an active node $\al'$ such that,
\eq\label{3.17}
\fW_{\RR^\prime}(\al')\prec \fW_{\RR} (\al^\ast).
\ee
and 
\eq\label{3.18}
|\La_\RR(\of)|^2 \ll |\La_{\RR^{\prime}}(\of')| + o_{N\to\infty}(1),
\ee

\medskip

Moreover $f'_{\al_0}=f_{\al_0}$, and for each $\al'\in\mA'$ one has that $f'_{\al'}=f_\al$ for some $\al\in \mA$, or $f'_{\al'}=\nud$.
\end{prop}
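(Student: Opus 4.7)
The plan is to execute one step of the Bergelson--Leibman PET induction in the form adapted by Tao--Ziegler \cite{TZ08} to the pseudo-random majorant setting, mirroring the schematic calculation worked out in the $d=2$ example preceding the statement. First I normalize by translating $\ux \mapsto \ux - \uR_{\al^*}(y,\uh)$, which leaves $\La_\RR(\of)$ unchanged but replaces each $\uR_\al$ by $\uR_\al - \uR_{\al^*}$; we may therefore assume $\uR_{\al^*} = \underline{0}$, so that the factor at node $\al^*$ is simply $f_{\al^*}(\ux)$, independent of $(y,\uh)$. Bounding $|f_{\al^*}| \leq \nud$ and applying Cauchy--Schwarz in $(\ux,\uh)$ against the weight $\nud$ yields
\[
|\La_\RR(\of)|^2 \leq \Big(\EE_{\ux}\nud(\ux)\Big)\Big(\EE_{\ux,\uh}\nud(\ux)\,\Big|\EE_{y\in[M]}\prod_{\al\neq\al^*} f_\al\bigl(\ux+\uR_\al(y,\uh)\bigr)\Big|^2\Big);
\]
the first bracket is $1+o(1)$ by the single-term instance of Proposition \ref{prop2.1}. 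Applying the van der Corput inequality \eqref{3.3} to the inner $y$-average introduces two fine-scale shifts $h^1,h^2 \in [H]$ and, up to $o(1)$, replaces $|\EE_y \ldots|^2$ by the expected double-shifted product.

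The resulting expression is exactly $\La_{\RR'}(\of')$ for the doubled system $\RR'$ of \eqref{3.10}--\eqref{3.11}: the formerly nonlinear node $\al^*$ is replaced by the origin node carrying $\nud$ (hence becoming inactive), while each remaining node $\al \neq \al^*$ splits into two copies $\al^1,\al^2$ with polynomials $\uR_\al(y+h^\si,\uh)$ and functions $f_\al^{(1)} = f_\al$, $f_\al^{(2)} = \overline{f_\al}$. I choose $\al_0' := \al_0^1$ as the new distinguished node, so that $f'_{\al_0'} = f_{\al_0}^{(1)} = f_{\al_0}$, as required. Lemma \ref{lem3.2}, applied with distinguished node $\al_0$ and the nonlinear node $\al^*$, then guarantees that $\RR'$ is in general position with respect to $\al_0^1$.

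It remains to identify an active node $\al'$ with $\fW_{\RR',\of'}(\al') \prec \fW_{\RR,\of}(\al^*)$. I take $\al'$ to be the doubled copy $\al_0^2$ (or more generally a suitable doubled counterpart of a node realizing the top-weighted slot of $\fW_\RR(\al^*)$). The pairwise differences $\uR'_{\al^\si} - \uR'_{\al'}$ are computed case by case in the proof of Lemma \ref{lem3.2}: in the generic cases the leading $y$-term is preserved from the original system, while in the degenerate case (matching leading coefficients with $\si \neq \tau$) the $y$-degree drops by one via the cancellation $(y+h^1)^{d} - (y+h^2)^{d} = d(h^1 - h^2)y^{d-1} + \ldots$. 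No new top-degree equivalence class is introduced, and the equivalence class to which the formerly active node $\al^*$ contributed is now absent, since $\al^*$ has become inactive. This produces the strict decrease in the reverse-lex ordering.

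The main obstacle is the weight-matrix bookkeeping in this last step: one must track the degree changes under doubling simultaneously for every coordinate direction $j \in \{1,\ldots,l\}$, and confirm that the net effect---removal of $\al^*$ from the active set together with the splitting of every other active node into two shifted copies---produces a strict decrease at the reverse-lex-largest nonzero slot of the weight matrix, rather than a reshuffling compensated at a later position. This is precisely the coordinate-wise analogue of the scalar weight-vector analysis of \cite[Proposition 5.14]{TZ08}, and the general-position hypothesis, in the form of Lemma \ref{lem3.2}, is exactly what guarantees that the scalar argument applies in every coordinate direction.
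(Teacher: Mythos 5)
Your overall skeleton matches the paper's proof (normalize so $\uR_{\al^\ast}=\underline{0}$, Cauchy--Schwarz, van der Corput, recognize the doubled system $\RR'$, invoke Lemmas \ref{lem3.1}--\ref{lem3.2}), but two genuine gaps remain. First, after normalization there may be \emph{several} nodes $\be$ with $\deg_y\uR_\be=0$, not only $\al^\ast$; the paper groups all of these into $\mA_0$ and majorizes the full product $G=\prod_{\be\in\mA_0}T^{\uR_\be(\uh)}f_\be$ by $H=\prod_{\be\in\mA_0}T^{\uR_\be(\uh)}\nud$ \emph{before} Cauchy--Schwarz, so each such $\be$ appears once in $\RR'$ carrying $\nud$. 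Your version only majorizes the $\al^\ast$ factor, then applies van der Corput to the product over all $\al\neq\al^\ast$: for a $y$-constant node $\be\in\mA_0\setminus\{\al^\ast\}$ the two "shifted copies" $\uR_\be(y+h^1,\uh)$ and $\uR_\be(y+h^2,\uh)$ are identical polynomials, so you would get either a repeated node (violating general position, since Definition 3.2 forbids $\uR_{\al}=\uR_{\be}$ for distinct nodes) or a factor $f_\be^2$ that is not bounded by $\nud$. Your construction therefore does not reproduce the $\RR'$ of \eqref{3.10} except in the special case $\mA_0=\{\al^\ast\}$.

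Second, and more importantly, the weight-matrix decrease \eqref{3.17} is not produced by $\al^\ast$ becoming inactive: after normalization $\uR_{\al^\ast}=\underline{0}$ is $y$-constant, so $\al^\ast$ already contributes nothing to $\fW_{\RR,\of}(\al^\ast)$, and deactivating it changes no entry. The actual mechanism in the paper is a \emph{second} re-centering: one identifies the lexicographically-first nonzero entry $\fw_{j_0,k_0}$ of $\fW_{\RR,\of}(\al^\ast)$, corresponding to a node $\al$ with $R_{j_0,\al}\in\bmR_{j_0}$ of degree $k_0$, and then chooses $\al'$ to be the doubled copy $\al^1$. Shifting by $\uR'_{\al^1}$ annihilates the equivalence class of $R_{j_0,\al^1}$ (so $\fw'_{j_0,k_0}$ drops by one), leaves entries at larger indices unchanged (those correspond to strictly larger $y$-degree in coordinate $j_0$, or to coordinates $j>j_0$ where $\deg_y R_{j,\al^1}=0$), and may only create new entries at smaller indices, which is harmless in the reverse-lex order. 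Your suggestion $\al'=\al_0^2$ does not work in general: $\al_0$ need not realize the first nonzero slot, and re-centering at it need not decrease the critical entry. You flag this step as the main obstacle and describe it as an analogue of the scalar case, but the specific argument you offer is the wrong one, and the correct identification of the re-centering node is precisely the content you would need to supply.
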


\begin{proof} Shifting the system $\RR$ by $\uR_{\al^\ast}$, which amounts to changing the polynomials $\uR_\al\to\uR_\al-\uR_{\al^\ast}$, we may assume that $\uR_{\al^\ast}=0$, where $\al^\ast$ is a fixed non-linear node with respect to $\al_0$. Clearly the shifted system will remain in general position.\\ 

Write $\mA=\mA_0\cup\mA_1$, where $\mA_0=\{\be\in\mA:\ \deg_y\,(\uR_\be)=0\}$ and $\mA_1:=\mA\backslash \mA_0$. Accordingly, write
\[
G(x,\uh):=\prod_{\be\in\mA_0} T^{\uR_\be(\uh)} f_\be(\ux)
\]
and note that 
\[
|G(\ux,\uh)| \leq H(\ux,\uh):=\prod_{\be\in\mA_0} T^{\uR_\be(\uh)} \ \nud_\be(\ux).
\]

\medskip

Then by the polynomial forms condition we have that,

\eq\label{3.19}
\EE_{\uh\in[H]^t}\,\EE_{\ux\in X} H(\ux,\uh) = 1 +o(1).
\ee
Let 
\eq\label{3.20}
F(\ux,y,\uh):= \prod_{\be\in\mA_1} T^{\uR_\be(y,\uh)} f_\be(\ux),
\ee
then
\begin{align}\label{3.21}
\mid\La_\mP(\of)\mid 
&\leq\, \EE_{\uh\in[H]^t}\ \EE_{\ux\in X}\ H(\ux,\uh)\,
\mid\EE_{y\in [M]} F(\ux,y,\uh)\mid\\
\nonumber\\
&\leq \,\EE_{\uh\in[H]^t}\ \EE_{\ux\in X}\ H(\ux,\uh)\, \EE_{y\in [M]} 
\mid\EE_{h\in [H]}\,F(\ux,y+h,\uh)\mid + o(1).\nonumber
\end{align}

\medskip

Here the $o(1)$ term is coming from shifting the variables $y\to y+h$ causing an error term 
$O((\log\,N)^{C_\RR}\,H/M)=O((\log\,N)^{-\sqrt{L}/2}\,)=o(1)\,$ if $L$ is chosen sufficiently large with respect to $\RR$. Applying the Cauchy-Schwarz inequality in the $h$-variable, using \eqref{3.19}, we get

\begin{align}\label{3.22}
\mid\La_\mP(\of)\mid^2  
&\leq \,\EE_{(\uh,h^1,h^2)\in[H]^{t+2}}\EE_{\ux\in X}\,\EE_{y\in [M]}
\,H(\ux,\uh)\,F(\ux,y+h^1,\uh)\,F(\ux,y+h^2,\uh) + o(1)\\
\nonumber\\
&:= \La_{\RR^\prime}(\of')+o(1),\nonumber
\end{align}
where
\eq\label{3.23}
\RR^\prime =\{\uR_\be (\uh):\ \be\in\mA_0\}
\cup\{\uR_\be (y+h^1,\uh):\ \be\in\mA_1\}
\cup\{\uR_\be (y+h^2,\uh):\ \be\in\mA_1\}.
\ee
We index this system with the set of nodes $\mA':=\mA_0\cup\mA_1^1\cup\mA_1^2$, so that for the nodes  $\be^1\in\mA_1^1$ and $\be^2\in\mA_1^2$ we have the polynomials
\[
\uR_{\be^1}(y,\uh,h^1,h^2) = \uR_\be (y+h^1,\uh),\ \ 
\uR_{\be^2}(y,\uh,h^1,h^2) = \uR_\be (y+h^2,\uh)
\]
and functions
\[
f'_{\be^1}(\ux) = f'_{\be^2}(\ux) = f_\be (\ux).
\]

\medskip

Note that $\RR'$ is the doubling of the system $\RR$ and hence by Lemma \ref{lem3.1} and Lemma \ref{lem3.2} it is in general position with respect to the distinguished node $\al_0^1$. The doubling $R_{j,\al}(y,\uh)\to R_{j,\al}(y+h^\si,\uh)$ ($\si=1,2$) does not change the main terms of the polynomials $R_{j,\al}(y,\uh)$ in the $y$-variable, hence $\fW_{\RR'}=\fW_{\RR}$. In the above doubling procedure we do not activate inactive nodes but we may deactivate some active nodes, in fact those the active nodes $\al$ such that $\deg_y\,(\uR_\al)=0$. Thus $\fW_{\RR',\of'}\prec\fW_{\RR,\of}\,$ or $\,\fW_{\RR',\of'}=\fW_{\RR,\of}$.\\

Let $\fw_{j_0 k_0}$ be the first non-zero entry of the weight matrix $\fW_{\RR,\of}$, such an entry must exist as $\al_0$ is a nonlinear node. This means there is a polynomial $R_{j_0,\al}\in \RR_{j_0}$ such that $\deg_y\,(R_{j_0 \al})=k_0$ but $\bmR_j=\emptyset$ for $j<j_0$ and 
$\,\deg_y\,(R_{j_0 \be})\geq k_0\,$ for all 
$\,\bmR_{j_0,\be}\in \RR_{j_0}$.

Also $\,\deg_y\,(\RR_{j,\al})=0$ for $j>j_0\,$ by the definition of the reduced class of polynomials $\bmR_{j_0}$.\\

We claim that $\fw_{\RR',\of'}(\al^1)\prec \fw_{\RR',\of'}$ and hence $\fW_{\RR',\of'}(\al^1) \prec \fW_{\RR,\of}$, where $\fw_{\RR',\of'}(\al^1)$ is the weight matrix of the shifted system $\{\uR'_{\al'}-\uR'_{\al^1}:\ \al'\in \mA'\}$. Indeed if $j>j_0$ then $\,\deg_y\,(\RR'_{j,\al^1})=0$ thus we do not change the entries $\fw'_{j,k}$. If $k>k_0$ then again we do not change the entries $\fw'{j_0,k}$ as they depend only on the main terms of polynomials in $\RR'_{j_0}$ of degree $k$. However we reduce the entry $\fw'_{j_0,k_0}$ by 1 when we subtract the polynomial $R_{j_0,\al^1}(y,\uh)$ from the polynomials $R_{j_0,\al'}(y,\uh)\in \bmR_{j_0}$ as the equivalence class of $R_{j_0,\al^1}(y,\uh)$ vanishes. We may get new equivalence classes of smaller degrees than $k_0$ but that does not affect the ordering of the matrices. This proves \eqref{3.17} with and \eqref{3.18} follows from \eqref{3.22}.
\end{proof}


We have shown, after shifting the system so that $\uR_{\al_0}=\underline{0}$,
\begin{equation} \label{linearizedandweighted}
|\Lambda(f)|^{2^s}\leq 
\EE_{\underline{h}, y, \ux} f_{\al_0}(\underline{x})\prod_{\alpha\in A_{nl}}T^{\underline{R_{\alpha}}(\underline{h},y)}\nu^{(d)}(\underline{x})\times \prod_{\alpha\in A_{l}}T^{\ub_{\al} (\uh)y +\underline{c_{\alpha}}(\underline{h})}f_\al(\underline{x})+o(1),
\end{equation}

where $\ub_{\al} (\uh),\underline{c_{\alpha}}(\underline{h})$ are integral polynomial maps dependent on shift parameters $\underline{h}=(h_1,\ldots,h_t)$. We use $A_l$ to denote the active nodes ($l$ stands for linear), and we let $A_{nl}$ denote the inactive nodes. Note that polynomial system 
$\RR=\{\uR_\al(y,\uh),\al\in\mA_{nl}\}\bigcup 
\{\uR_\al(y,\uh)=\ub_{\al}(\uh)y+\uc_\al(\uh)\}$ is in general position in the sense of $\eqref{3.7}$. In particular, $\pi_i(\ub_{\al})\neq 0$ for all $i\in [d]$ and $\al\in \mA$.\\

We will show that the expression in \eqref{linearizedandweighted} is bounded by 
\begin{align}\label{gowersnormcontrol}
&\leq \bigg(\EE_{\uh}\ 
\EE_{\underline{x},\uy^{(0)}
,\uy^{(1)}}
\prod_{\omega\in\{0,1\}^{|A_l|}} T^{\sum_{\gamma\in A_l} y_{\gamma}^{(\omega_{\gamma})}\ub_{\ga} (\uh)} \,f_{\al_0} (\ux)\bigg)^{2^{-|A_l|}} + o(1)\\
&= \|f_{\al_0}\|_{\Box_{H,M}(\ub_{\ga}:\ \ga\in \mA_l)}
\nonumber
\end{align}
i.e. the average of the local Gowers box norms $\|f_{\al_0}\|_{\Box(\ub_{\ga}(\uh):\ \ga\in \mA_l)}$ +o(1).\\

The main tool in this step is the weighted generalized von Neumann inequality \cite[Appendix A]{TZ08}. To this end, we introduce new shift parameters $\uy=(y_{\alpha})_{\alpha\in A_l}$ and define $\uQ_0(\uy):=\sum_{\alpha\in A_l}\ub_{\alpha}(\uh)\,y_{\alpha}\,$. Then, after shifting $y$ by $-\sum_{\alpha\in \mA_l} y_{\alpha}$ and shifting the polynomial system by $\uQ_0(\uy)$, the right-hand side of (\ref{linearizedandweighted}) becomes 

\begin{align}
\EE_{\uh}\ \EE_{\uy,y,\ux}\,
T^{\underline{Q}_0 (\uy)}\, f_{\al_0}(\underline{x})&
\prod_{\alpha\in \mA_{nl}}
T^{\uQ_0(\uy)+\underline{R_{\alpha}}\left(\underline{h},y-\sum_{\gamma\in A_l}y_{\gamma}\right)}\nu^{(d)}\left(\underline{x}\right) \nonumber \\
&\times \prod_{\alpha\in A_{l}} T^{\ub_{\alpha}(\uh)y + \sum_{\gamma\in A_l} (\ub_\ga(\uh)-\ub_\al (\uh))
y_{\ga}+\uc_\al(\uh)} f_{\al} (\ux) +o(1). \label{shifted}
\end{align}

Notice that since we simply shifted the system, it is still in general position as a system in the variables $\uh,\uy$ and hence will satisfy \eqref{2.7} and the polynomial forms condition \eqref{2.8} applies.

Let 
\begin{align*}
f_{0,\underline{x},\underline{h},y}(\uy) &= T^{\uQ_0(\uy)}f_{\al_0}(\underline{x})\prod_{\alpha\in A_{nl}}T^{\uQ_0(\uy)+\underline{R_{\alpha}}\left(\underline{h},y-\sum_{\gamma\in A_l}y_{\gamma}\right)}\nu^{(d)}\left(\underline{x}\right),\\
f_{\alpha,\underline{x},\underline{h},y}(\uy) &= T^{\ub_{\al} (\uh)y+\sum_{\gamma\in A_l}(\ub_{\ga} (\uh)-\ub_{\al} (\uh))y_{\gamma}+\underline{c_{\alpha}}(\underline{h})}f_{\alpha}\left(\underline{x}\right),\\
\nu^{(d)}_{\alpha,\underline{x},\underline{h},y}(\uy) &= T^{\ub_{\al} (\uh)y+\sum_{\gamma\in A_l}(\ub_{\ga} (\uh)-\ub_{\al} (\uh))y_{\gamma}+\underline{c_{\alpha}}(\underline{h})}\nu^{(d)}\left(\underline{x}\right).
\end{align*}
With this in mind, (\ref{shifted}) becomes 
\[\EE_{\underline{h},y,\underline{x}}\,\EE_{\uy}\  f_{0,\underline{x},\underline{h}}(\uy) \prod_{\alpha\in A_l} f_{\alpha,\underline{x},\underline{h}}(\uy). 
\]

By design, $0\leq f_{\alpha,\underline{x},\underline{h},y}\leq \nu^{(d)}_{\alpha,\underline{x},\underline{h},y}\ $ pointwise and $f_{\alpha,\underline{x},\underline{h}}(\uy)$ is independent of $y_{\alpha}$ for every $\alpha\in A_l$. Thus, we may apply the generalized weighted von Neumann theorem in the $\uy$-variable \cite[Appendix A]{TZ08}: 
\begin{equation}
\EE_{\underline{h},y,\ux}\,\EE_{\uy}\  f_{0,\underline{x},\underline{h},y}(\uy)
\prod_{\al\in A_l} f_{\al,\ux,\uh}(\uy)\leq 
\EE_{\uh,y,\ux}\ ||f_{0,\ux,\uh,y} ||_{\square^{A_l}(\nu^{(d)})}\prod_{\al\in A_l} ||\nud_{\al,\ux,\uh,y}||_{\square^{A_l\setminus \{\alpha\}}}^{1\slash2}.
\end{equation}
Thus by Hölder's inequality, 
\eq\label{holder}
\EE_{\underline{h},y,\ux}\,\EE_{\uy}\  f_{0,\underline{x},\underline{h},y}(\uy)
\prod_{\al\in A_l} f_{\al,\ux,\uh}(\uy)\leq
 \bigg(\EE_{\uh,y,\ux}\ ||f_{0,\ux,\uh,y}||_{\square^{A_l}(\nu^{(d)})}^{2^{|A_l|}} \bigg)^{2^{-|A_l|}},  
\ee


as by the polynomial forms condition, we have for all $\al\in \mA_l$

\begin{align*}
\EE_{\uh,\ux}
||\nu^{(d)}_{\alpha,\underline{x},\underline{h},y,\cdot} ||_{\square^{A_l \setminus \{\alpha\}}}^{2^{|A_l|-1}}
&=\EE_{\underline{h},\ux,y}\ \EE_{\uy^{(0)},\uy^{(1)}}\  \prod_{\omega\in\{0,1\}^{A_l\setminus\{\alpha\}}}T^{\ub_{\al} (\uh)y+\sum_{\gamma\in A_l}(\ub_{\ga} (\uh)-\ub_{\al} (\uh)y_{\gamma}^{(\omega_{\gamma})}+\underline{c_{\alpha}}(\underline{h})}\nu^{(d)}\left(\underline{x}\right)\\
&=1+o(1).
\end{align*}


Note that,
\begin{equation}\label{weightedparavg}
\EE_{\underline{h}, y , \underline{x}}
||f_{0,\ux,\underline{h},y,\cdot}||_{\square^{A_l}(\nu^{(d)})}^{2^{|A_l|}}=
\EE_{\uh,\ux}\,\EE_{\uy^{(0)},\uy^{(1)}}\,
\prod_{\omega\in\{0,1\}^{A_l}} T^{\uQ_0(\uy^{(\omega)})}f(\underline{x})\ w(\underline{h},\underline{x},\uy^{(0)},\uy^{(1)}), 
\end{equation}
where
\begin{align}\label{weightfunct}
w(\underline{h},\underline{x},\uy^{(0)},\uy^{(1)})&:=
\EE_{y}\ \prod_{\alpha\in A_{nl}}\prod_{\omega\in\{0,1\}^{A_l}}T^{\uQ_0(\uy^{(\omega)})+\underline{R_{\alpha}}\left(\underline{h},y-\sum_{\gamma\in A_l}y_{\gamma}^{(\omega_{\gamma})}\right)}\nu^{(d)}\left(\underline{x}\right)\nonumber\\
&\times \prod_{\alpha\in A_l} \prod_{\omega^{(\alpha)}\in\{0,1\}^{A_l\setminus\{\alpha\}}}T^{\ub_{\al} (\uh)y+\sum_{\gamma\in A_l}(\ub_{\ga} (\uh)-\ub_{\al} (\uh))y_{\gamma}^{(\omega_{\gamma}^{(\alpha)})}+\underline{c_{\alpha}}(\underline{h})}\nu^{(d)}\left(\underline{x}\right).
\end{align}
Notice that the right-hand side of (\ref{weightedparavg}) is a weighted parallelogram average, and, ignoring the weight $w$, we end up with an average of local Gowers norms:
$$
\begin{aligned}
\EE_{\underline{h},\underline{x}}\,\EE_{\uy^{(0)},\uy^{(1)}}{\EE}\prod_{\omega\in\{0,1\}^{A_l}} T^{\uQ_0(\uy^{(\omega)})}f_{\al_0}(\underline{x})
&=\EE_{\underline{h},\underline{x}}\,\EE_{\uy^{(0)},\uy^{(1)}}\ \prod_{\omega\in\{0,1\}^{A_l}} T^{\sum_{\gamma\in A_l} y_{\gamma}^{(\omega_{\gamma})}\ub_{\ga} (\uh)}f_{\al_0}\left(\underline{x}\right)\\
&=\EE_{\uh}\ \|f_{\al_0}\|_{\Box(\ub_{\ga}(\uh):\ \ga\in \mA_l)}^{2^{|A_l|}}.\\\label{goal}
\end{aligned}
$$

Thus, it suffices to prove 
\begin{equation}
\underset{\uh,\ux,\uy^{(0)},\uy^{(1)}}{\EE}\prod_{\omega\in\{0,1\}^{A_l}} T^{\uQ_0(\uy^{(\omega)})}f_{\al_0}(\ux)\ (w(\uh,\ux,\uy^{(0)},\uy^{(1)})-1)= o(1) \label{done}
\end{equation}
By the Cauchy-Schwarz inequality and the bound $|f_{\al_0}|\leq \nu^{(d)}$, the square of the left-hand side of (\ref{done}) is at most 
$$
\begin{aligned}
&\textbf{} \underset{\uh,\ux,\uy^{(0)},\uy^{(1)}}{\EE}\prod_{\omega\in\{0,1\}^{A_l}} T^{\uQ_0(\uy^{(\omega)})}\nu^{(d)}\,(\ux)\big(w(\uh,\ux,\uy^{(0)},\uy^{(1)})-1\big)^2\\
&=\underset{\uh,\ux,\uy^{(0)},\uy^{(1)}}{\EE}\prod_{\omega\in\{0,1\}^{A_l}} T^{\uQ_0(\uy^{(\omega)})}\nu^{(d)}(\ux)\,
\big(w(\uh,\ux,\uy^{(0)},\uy^{(1)})^2
-2w(\uh,\ux,\uy^{(0)},\uy^{(1)})+1\big)\\
&=(1+o(1))-2(1+o(1))+(1+o(1)) =o(1),
\end{aligned}
$$
where we have repeatedly applied the polynomial forms condition to the coordinate projections of the polynomials appearing in \eqref{weightedparavg}-\eqref{weightfunct}, in the $\uh,\ux,\uy^{(0)},\uy^{(1)}$ and $y$ variables. To justify this, let $i\in [d]$ be fixed and consider the polynomials

\eq\label{poly1}
x_i+\pi_i\big(\uQ_0(\uy^{(\om)},\uh)\big)= \sum_{\al\in\mA_l}
\pi_i (\ub_\al(\uh))\,y_\al^{(\om_\al)},
\ee
for $\uom\in \{0,1\}^{\mA_l}$. The polynomials,
\eq\label{poly2}
x_i + \sum_{\al\in\mA_l}
\pi_i (\ub_\al(\uh))\,y_\al^{(\om_\al)}
+\pi_i\big(\underline{R_{\al'}}\,(\uh,y-\sum_{\ga\in A_l}y_{\ga}^{(\om_{\ga})})\big),
\ee
for $\al'\in\mA_{nl}$ and $\uom\in \{0,1\}^{\mA_l}$, 
and the polynomials
\eq\label{poly3}
x_i+ \pi_i(\ub_{\al}(\uh))\,y+
\sum_{\ga\in A_l}\pi_i\big((\ub_{\ga}-\ub_{\al})(\uh)\big)\, y_{\ga}^{(\om_{\ga}^{(\alpha)})} 
+ \pi_i(\underline{c_\al}(\uh)),
\ee

for $\al\in\mA_l$, $\uom^{(\al)}\in\{0,1\}^{A_l\setminus\{\al\}}$.\\

Since $\pi_i(\ub_\al (\uh))\neq 0$, the polynomials in \eqref{poly1} are distinct and satisfy \eqref{2.7}. The polynomials in \eqref{poly2} are distinct for the same reason, and they are also distinct from the polynomials in \eqref{poly1} as either $\deg_y\,\underline{R_{\al'}}\,(\uh,y)\geq 1$ hence they depend also on the $y$-variable, or $\pi_i(\underline{R_{\al'}})\neq 0$ hence contain a term depending only on the variables $\uh$. Finally the polynomials in \eqref{poly3} are distinct as 
$\pi_i\big((\ub_{\ga}-\ub_{\al})(\uh)\big)\neq 0$ for $\ga\neq \al$, and they are distinct from the polynomials 
in \eqref{poly1}-\eqref{poly2}, as they are independent of the $y_\al^{(0)},y_\al^{(1)}$ variables but depend on the $y$-variable.\\

This shows the validity of \eqref{gowersnormcontrol}. Since our initial family of functions is $f_1,\ldots,f_l$ we have that $\al_0=k$ and $f_{\al_0}=f_k$, and after re-indexing and renaming the variables, we may write 
\[
(\ub_\ga(\uh):\ \ga\in \mA_l) =
(\uQ^k_j(\uh^k):\ j\in [s_k])
\]
thus we have for all $1\leq k\leq l$,
\eq\label{gnormk}
\big| \La_{\mP,V}(\of)\big| \leq 
||f_k||^{2^{-S_k}}_{\Box_{H,M}(\uQ^k_j(\uh^k):\ j\in [s_k])}+o(1).
\ee
Note that the norms on right side of \eqref{gnormk} depend on $k$, however they can be majorized by a single local polynomial box norm using a simple concatenation property. Indeed one can define the concatenation of two polynomial systems $\QQ=(\uQ_1(\uh),\ldots,\uQ_s(\uh):\ \uh\in[H]^t)$ and $\QQ'=(\uQ_1'(\uh'),\ldots,\uQ_{s'}(\uh'):\ \uh'\in[H]^{t'})$ as $\QQ\oplus\QQ'=
(\uQ_1(\uh),\ldots,\uQ_s(\uh),\uQ_1'(\uh'),\ldots,\uQ_{s'}(\uh'):\ (h,h')\in [H]^{t+t'}$. Inductively one defines the concatenation $\QQ=\QQ^1\oplus\ldots\oplus\QQ^l$ of more that two polynomial systems $\QQ^1,\ldots,\QQ^l$. The following ``concatenation lemma" follows from simple application of 
H\"{o}lder's and the Cauchy-Schwarz inequality, see \cite[Lemma A.3]{TZ08}

\begin{lem}\label{lem3.3} Let $\QQ=\QQ^1\oplus\ldots\oplus\QQ^l$. Then one has for any $f:X\to\R$
\eq\label{concatlem}
||f||_{\Box_{H,M}(\QQ)}\geq ||f||_{\Box_{H,M}(\QQ^k)},
\ee
for all $1\leq k\leq l$.
\end{lem}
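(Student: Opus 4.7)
The plan is to combine a pointwise monotonicity of the local box norm $\|\cdot\|_{\Box_M}$ under adjoining extra shift directions with Jensen's inequality on the $\uh$-average that defines $\|\cdot\|_{\Box_{H,M}}$. The only real content is the pointwise monotonicity; the rest is routine.

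First, for any $f:X\to\R$ and any vectors $\uu_1,\ldots,\uu_{s+1}\in\Z^d$ I would establish the monotonicity $\|f\|_{\Box_M(\uu_1,\ldots,\uu_{s+1})}\geq\|f\|_{\Box_M(\uu_1,\ldots,\uu_s)}$. Grouping the $2^{s+1}$-fold product in \eqref{1.4} according to the value of the new coordinate $\om_{s+1}\in\{0,1\}$ factors it as
\[
\prod_{\om\in\{0,1\}^{s+1}} f\Big(\ux+\sum_{i=1}^{s+1}y_i^{(\om_i)}\uu_i\Big) = \Psi(y_{s+1}^{(0)})\,\Psi(y_{s+1}^{(1)}),
\]
where $\Psi(t):=\prod_{\om'\in\{0,1\}^s} f(\ux+\sum_{i\leq s}y_i^{(\om'_i)}\uu_i+t\,\uu_{s+1})$. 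Since $y_{s+1}^{(0)}$ and $y_{s+1}^{(1)}$ are averaged independently, their joint average collapses to $\Phi^2$, where $\Phi(\ux,\uy^{(0)},\uy^{(1)})=\EE_{t\in[M]}\Psi(t)$ and $\uy^{(\tau)}=(y_1^{(\tau)},\ldots,y_s^{(\tau)})$. Applying $\EE[\Phi^2]\geq(\EE\,\Phi)^2$ and then performing the substitution $\ux\mapsto\ux-t\,\uu_{s+1}$ inside $\EE\,\Phi$ (which eliminates both the $\uu_{s+1}$-shift and the $t$-dependence) yields $\EE\,\Phi=\|f\|^{2^s}_{\Box_M(\uu_1,\ldots,\uu_s)}$, which is itself non-negative by iterating the same identity. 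Extracting $2^{s+1}$-th roots gives the monotonicity, and by symmetry of the box norm in its vector arguments together with induction, any subset of the $\uu_i$'s may be removed.

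Second, applied to $\QQ(\uh)=(\uQ^1(\uh^1),\ldots,\uQ^l(\uh^l))$ versus its sub-block $\uQ^k(\uh^k)$, for each fixed $\uh$ one obtains
\[
\|f\|^{2^s}_{\Box_M(\QQ(\uh))}\;\geq\;\|f\|^{2^s}_{\Box_M(\uQ^k(\uh^k))}\;=\;\big(\|f\|^{2^{s_k}}_{\Box_M(\uQ^k(\uh^k))}\big)^{2^{s-s_k}}.
\]
Averaging over $\uh\in[H]^{t_1+\cdots+t_l}$, the right-hand side depends only on $\uh^k$, so the remaining $\uh^j$ integrate out trivially, and Jensen's inequality with convex exponent $p=2^{s-s_k}\geq 1$ gives
\[
\|f\|^{2^s}_{\Box_{H,M}(\QQ)}\;\geq\;\EE_{\uh^k}\big(\|f\|^{2^{s_k}}_{\Box_M(\uQ^k(\uh^k))}\big)^{p}\;\geq\;\big(\EE_{\uh^k}\|f\|^{2^{s_k}}_{\Box_M(\uQ^k(\uh^k))}\big)^{p}\;=\;\|f\|^{2^s}_{\Box_{H,M}(\QQ^k)}.
\]
Extracting $2^s$-th roots yields \eqref{concatlem}.

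I expect no genuine obstacle. The entire argument uses only Cauchy--Schwarz in the form $\EE\,\Phi^2\geq(\EE\,\Phi)^2$ and Jensen in the form $\EE\,X^p\geq(\EE\,X)^p$ for $p\geq 1$ and $X\geq 0$; no part of it appeals to the polynomial forms condition or to any structural hypothesis on the polynomial systems $\uQ^j$.
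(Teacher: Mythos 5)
Your proof is correct, and it uses exactly the tools the paper invokes — the paper itself supplies no argument but merely remarks that the lemma ``follows from simple application of H\"older's and the Cauchy--Schwarz inequality'' and cites \cite[Lemma A.3]{TZ08}; you have simply written out that argument. The pointwise monotonicity $\|f\|_{\Box_M(\uu_1,\ldots,\uu_{s+1})}\geq\|f\|_{\Box_M(\uu_1,\ldots,\uu_s)}$ is derived by factoring over the last coordinate of $\uom$, applying $\EE[\Phi^2]\geq(\EE\Phi)^2$, and absorbing the remaining shift by the translation $\ux\mapsto\ux-t\,\uu_{s+1}$ on the group $X$; the passage to the $\uh$-averaged norm is Jensen with exponent $2^{s-s_k}\geq 1$. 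Both steps are sound, the non-negativity needed to extract roots is justified by the same factoring one level down, and the appeal to permutation symmetry of the box norm in its directions is legitimate since it amounts to relabelling the dummy $y_i^{(\tau)}$ variables.
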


Note that if $\QQ^k$ is in general position and moreover if $\pi_i(\uQ^k_j)\neq 0$ for all $i\in[d],\ k\in [l]$ and $j\in [s_k]$ then $\QQ$ is also in general position. This holds in our case $\pi_i(\ub_\al)\neq 0$ for all $\al\in \mA_l$. Since $|f|_k\leq \nud$ and $\|\nud\|_{\Box_{H,M}(\QQ^k)}=1+o(1)$ by the polynomial forms condition as the system $\QQ^k$ is in general position. We have our key estimate by Lemma \ref{lem3.3} and inequality \eqref{gnormk}.

\begin{cor}\label{cor3.1}(\emph{generalized von-Neumann inequality})\\
Let $\mP=(P_1(y)\vv_1,\ldots,P_l(y)\vv_l)$ be a polynomial system in general position. Then there exist $t,s,S\geq 1$ and a polynomial system $\QQ=(\uQ_1,\ldots,\uQ_s)$ in general position, satisfying $\pi(\uQ_j)\neq 0$, such that
\eq\label{von-Neumann}
\big| \La_{\mP,V}(\of)\big| \leq \min_{1\leq k\leq l}\,
||f_k||^{2^{-S}}_{\Box_{H,M}(\uQ_1,\ldots,\uQ_s)}+o(1).
\ee
\end{cor}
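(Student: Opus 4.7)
The plan combines three ingredients developed above: the PET reduction of Proposition \ref{prop3.1}, the weighted generalized von Neumann inequality of \cite[Appendix A]{TZ08}, and the concatenation Lemma \ref{lem3.3}.

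Fix $k\in\{1,\ldots,l\}$ and designate $\al_0 := k$ as the distinguished node of the initial system $(\mP_V,\of)$. Since $\mP_V$ is in general position, the hypotheses of Proposition \ref{prop3.1} hold. Iterate that proposition: each application squares the average and replaces the current system by its doubling, which by Lemmas \ref{lem3.1} and \ref{lem3.2} remains in general position with respect to the designated node, while strictly decreasing the weight matrix $\fW_{\RR,\of}(\al^\ast)$ in the reverse lexicographic order. Since this ordering is well-founded on $l\times D$ integer matrices, after some bounded number $S_k$ of iterations we reach a system $\RR^{(k)}$ in which every active node is linear in $y$ and carries the function $f_k$, while every other slot carries the weight $\nud$. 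Unwinding the nested Cauchy--Schwarz applications yields
\begin{equation*}
\big|\La_{\mP,V}(\of)\big|^{2^{S_k}} \ll \La_{\RR^{(k)}}(\of') + o(1).
\end{equation*}

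Next, linearize $\La_{\RR^{(k)}}$ exactly as in the computation \eqref{shifted}--\eqref{holder}: introduce new shift parameters $\uy=(y_\al)_{\al\in\mA_l}$, shift the system by $\uQ_0(\uy)=\sum_{\al\in\mA_l}\ub_\al(\uh)\,y_\al$, and apply the weighted generalized von Neumann theorem in the $\uy$-variable. This produces a weighted parallelogram average in $f_k$ with weight $w$ as in \eqref{weightfunct}; a Cauchy--Schwarz expansion of $(w-1)^2$ together with repeated applications of the polynomial forms condition (Proposition \ref{prop2.1}) to the coordinate-projected polynomial systems listed in \eqref{poly1}--\eqref{poly3} shows that $w$ has mean $1+o(1)$ and second moment $1+o(1)$, so may be replaced by $1$ up to an $o(1)$ error. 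Verifying that these projected systems satisfy the non-degeneracy condition \eqref{2.7} is exactly where the general-position hypothesis on $\mP_V$ is used. The output is estimate \eqref{gnormk},
\begin{equation*}
\big|\La_{\mP,V}(\of)\big|\le \|f_k\|_{\Box_{H,M}(\uQ^k_1,\ldots,\uQ^k_{s_k})}^{2^{-S_k}}+o(1),
\end{equation*}
where $\uQ^k_j(\uh^k)$ enumerates the linear coefficients $\ub_\al(\uh^k)$ of the surviving active nodes.

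Finally, to replace the $k$-dependent system by a single one, form the concatenation $\QQ := \QQ^1\oplus\cdots\oplus\QQ^l$. Each $\QQ^k$ is in general position with $\pi_i(\uQ^k_j)\neq 0$, and the distinct $\QQ^k$ depend on disjoint blocks of fine-scale parameters, so $\QQ$ is in general position with non-vanishing coordinate projections. Lemma \ref{lem3.3} gives $\|f_k\|_{\Box_{H,M}(\QQ^k)}\le \|f_k\|_{\Box_{H,M}(\QQ)}$ for every $k$. Setting $S:=\max_k S_k$ and using $|f_k|\le\nud$ together with $\|\nud\|_{\Box_{H,M}(\QQ)}=1+o(1)$ from Proposition \ref{prop2.1} to absorb the discrepancy in exponents, we obtain $|\La_{\mP,V}(\of)| \le \|f_k\|^{2^{-S}}_{\Box_{H,M}(\QQ)}+o(1)$ for each $k$, and taking the minimum yields \eqref{von-Neumann}. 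The principal obstacle throughout is the bookkeeping of general position: ensuring that every PET doubling, every coordinate projection in the polynomial forms step, and the final concatenation all preserve it, so that Proposition \ref{prop2.1} remains applicable at every invocation.
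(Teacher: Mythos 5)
Your argument follows the paper's own route essentially step by step: iterate Proposition \ref{prop3.1} with distinguished node $\al_0=k$ until only linear active nodes remain, then linearize, invoke the weighted generalized von Neumann inequality in the auxiliary $\uy$-variables and discharge the weight $w$ via the polynomial forms condition applied to the coordinate projections \eqref{poly1}--\eqref{poly3}, and finally concatenate the $k$-dependent box norms through Lemma \ref{lem3.3} and normalize the exponent using $\|\nud\|_{\Box_{H,M}(\QQ)}=1+o(1)$. The only slight imprecision is the claim that after PET every active node carries $f_k$ --- by Proposition \ref{prop3.1} they may carry any of the original $f_j$ --- but this is harmless since the von Neumann step only isolates the distinguished node $\al_0$ and uses the bound $|f_\al|\le\nud$ for the rest.
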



\section{Dual functions and the main decomposition.}

In this section we prove the crucial decomposition \eqref{1.6} following \cite{TZ14} which combines Gowers approach to decompositions theorems based on the Hahn-Banach theorem \cite{Gow10}. The starting point is the show the orthogonality of $\nud -1$ to products of so-called \emph{dual functions} associated to polynomial averages of local box norms. 

\begin{defn}\label{defn4.1} Let 
$\QQ=(\uQ_1,\ldots,\uQ_s):\ [H]^t\to \Z^{sd}$ be a polynomial map where $\uQ_j\in\Z^d[h_1,\ldots,h_t]$. 
Let $f_{\uom}:X\to\R$, 
$\uom\in \{0,1\}^s\backslash \{\underline{0}\}$ 
be a family of functions. We define the dual function of the family of functions $(f_{\uom})_{\uom\in \{0,1\}^s\backslash \{\zero\}}$, as 
\eq\label{4.1}
D_{\QQ}(f_{\uom})(\ux):= 
\EE_{\uh\in [H]^t}\,\EE_{\yyo,\yyv\in [M]^s}\,
\prod_{\uom\in \{0,1\}^s,\uom\neq \zero} 
f_{\uom}\big(\ux+\yomv\uQ_1(\uh)+\ldots +\yoms\uQ_s(\uh)\big).
\ee
If $f_ {\uom} =f$ for all $\uom\in {\{0,1\}^s\backslash \{\zero\}}$ then we write $D_{\QQ}(f)$ and refer to $D_{\QQ}(f)$ as the dual function of the function $f$.
\end{defn}

This means that for fixed $\uh$ and fixed $\yyo=(y_1^{(0)},\ldots,y_s^{(1)})$, $\yyv=(y_1^{(1)},\ldots,y_s^{(1)})$
we take the product of the functions $f_{\uom}$ evaluated at the vertices $\ux+\yomv\uQ_1(\uh)+\ldots +\yoms\uQ_s(\uh)$ forming a parallelepiped as $\uom=(\om_1,\ldots,\om_s)$ runs through all ${\{0,1\}^s\backslash \{\zero\}}$, where the vectors $\uQ_1(\uh),\ldots,uQ_s(\uh)$ give the directions of the side vectors of the parallelepiped, and then we sum the products for all such parallelepipeds was $\yyo,\yyv$ are running through $[M]^s$ and $\uh$ through $[H]^t$. 
The terminology \emph{dual function} is originated in \cite{GT06} for global Gowers norms, and is due to the fact that the inner product,
\eq\label{4.2}
\langle f,D_{\QQ}(f)\rangle := 
\EE_{\ux\in X} f(\ux) D_{\QQ}(f)(\ux) = \EE_{\uh\in[H]^t}\,
\| f\|_{\Box_M (\uQ_1(\uh),\ldots,\uQ_s(\uh))}^{2^s}.
\ee


The Gowers type local box-norm inner product of a family of functions $(f_{\uom})_{\uom\in\{0,1\}^s}$ of scale $M$ and directions $\uu_1,\ldots,\uu_s$, is defined as
\eq\label{4.3}
\langle f_{\om}\rangle_{\Box_M(\uu_1,\ldots,\uu_s)}:=
\EE_{\ux\in X} \EE_{\yyo,\yyv\in [M]^s}\,
\prod_{\uom\in \{0,1\}^s} 
f_{\uom}\big(\ux+\yomv\uu_1+\ldots +\yoms\uu_s\big).
\ee

Let us recall a basic inequality referred to as the Gowers-Cauchy-Schwarz inequality for box norms that is well-known in various forms see e.g.\cite[Appendix B]{TZ08}. 

\begin{lem}\label{lem4.1} One has 
\eq\label{4.4}
\big|\langle f_{\om}\rangle_{\Box_M(\uu_1,\ldots,\uu_s)}\big| \leq 
\prod_{\uom\in\{0,1\}^s} \|f_{\uom}\|_{\Box_M (\uu_1,\ldots,\uu_s)}
\ee
\end{lem}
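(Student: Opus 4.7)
The claim is the Gowers--Cauchy--Schwarz inequality for local box-norm inner products, and the natural approach is by induction on $s$, applying the Cauchy--Schwarz inequality once in each of the $s$ directions $\uu_1,\ldots,\uu_s$. The base case $s=0$ is trivial: both sides reduce to $|\EE_{\ux} f_{\emptyset}(\ux)|$.

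For the inductive step I would group the product $\prod_{\uom\in\{0,1\}^s} f_{\uom}(\ux+\sum_i y_i^{(\om_i)}\uu_i)$ according to the value of $\om_s$. For any fixed choice of $\ux$ and of the variables $(y_i^{(0)},y_i^{(1)})_{i<s}$, the product splits as a product of two independent expressions, one depending on $y_s^{(0)}$ through the factors with $\om_s=0$, and one depending on $y_s^{(1)}$ through the factors with $\om_s=1$. Averaging over $y_s^{(0)},y_s^{(1)}\in [M]$ therefore yields a pointwise product $F_0(\ux,\ldots)\cdot F_1(\ux,\ldots)$ of two functions of the remaining variables. Applying Cauchy--Schwarz in those remaining variables gives $|\EE F_0 F_1|^2 \leq (\EE F_0^2)(\EE F_1^2)$. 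A direct unfolding shows that $\EE F_0^2$ is exactly the box-norm form $\langle g_{\uom}\rangle_{\Box_M(\uu_1,\ldots,\uu_s)}$ in which every $f_{(\uom',1)}$ has been replaced by $f_{(\uom',0)}$, and symmetrically for $\EE F_1^2$; in other words we have squared the inner product at the cost of collapsing the family to one which is independent of $\om_s$.

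Iterating this step once for each of the directions $\uu_s,\uu_{s-1},\ldots,\uu_1$ successively duplicates each coordinate of the index set and produces the bound
\[
\big|\langle f_{\uom}\rangle_{\Box_M(\uu_1,\ldots,\uu_s)}\big|^{2^s}
\leq \prod_{\uom\in\{0,1\}^s}
\langle f_{\uom},\ldots,f_{\uom}\rangle_{\Box_M(\uu_1,\ldots,\uu_s)},
\]
where each factor on the right is the uniform form associated to the single function $f_{\uom}$; by the definitions \eqref{1.4} and \eqref{4.3} this factor equals $\|f_{\uom}\|^{2^s}_{\Box_M(\uu_1,\ldots,\uu_s)}$. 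Taking $2^s$-th roots yields \eqref{4.4}.

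There is no real obstacle: this is a standard Cauchy--Schwarz ladder identical to the one used for global Gowers inner products, see e.g.\ \cite[Appendix~B]{TZ08}. The only mild care is in checking that after each Cauchy--Schwarz step the resulting expression is again an instance of the same box-norm form, on a modified family; this is immediate from \eqref{4.3} because the direction $\uu_i$ enters only through the single summand $y_i^{(\om_i)}\uu_i$, so collapsing the $\om_i$ coordinate of the family preserves the form's shape.
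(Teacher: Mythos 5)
Your proof is correct and takes essentially the same Cauchy--Schwarz ladder approach as the paper: both iterate Cauchy--Schwarz once per direction $\uu_s,\ldots,\uu_1$, collapsing one coordinate of the index $\uom$ at each step until only uniform inner products remain. The only presentational difference is that you carry the $\ux$-average along through each Cauchy--Schwarz application, whereas the paper first establishes the pointwise bound for each fixed $\ux$ (Cauchy--Schwarz in the $y$-variables alone) and then applies H\"older in $\ux$; the two bookkeeping schemes yield the same bound.
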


\begin{proof} Let us write.
\[
f_{\uom,\ux}(y_1,\ldots,y_s):=
f_{\uom}(\ux+y_1\uu_1+\ldots +y_s\uu_s).
\]
For a fixed $\ux\in X$, we show 
\eq\label{4.45}
\big|\EE_{\yyo,\yyv\in [M]^s}\,
\prod_{\uom\in \{0,1\}^s} 
f_{\uom,\ux}(\yomv,\ldots,\yoms)\big| \leq
\prod_{\uom\in \{0,1\}^s} \|f_{\uom,\ux}\|_{\Box_M},
\ee
where for a function $f:X\to\R$,
\[
\|f\|_{\Box_M}^{2^s} = \EE_{\yyo,\yyv\in [M]^s}
\prod_{\uom\in \{0,1\}^s} f(\yomv,\ldots,\yoms).
\]
This is easy to see by repeated application of the Cauchy-Schwarz inequality. Indeed, separating the last variables and writing $\yyo=({\yyo}',y_s^{(0)})$ $\yyv=({\yyv}',y_s^{(1)})$, one may write 
\begin{align*}
& \langle f_{\om}\rangle_{\Box_M(\uu_1,\ldots,\uu_s)} =\\
&\EE_{{\yyo}',{\yyv}'} \big(\EE_{y_s^{(0)}}\,
\prod_{\uom'\in\{0,1\}^{s-1}} 
f_{(\uom',0)}(y_1^{(\om_1)},\ldots,y_{s-1}^{(\om_{s-1})},y_s^{(0)})\big)\,
\big(\EE_{y_s^{(1)}}\,
\prod_{\uom'\in\{0,1\}^{s-1}} 
f_{(\uom',1)}(y_1^{(\om_1)},\ldots,y_{s-1}^{(\om_{s-1})},y_s^{(1)})\big)
\end{align*}
Then by the Cauchy-Schwarz inequality

\begin{align*}
& \langle f_{\om}\rangle_{\Box_M(\uu_1,\ldots,\uu_s)}^2
\leq \\
& \big(\EE_{{\yyo}',{\yyv}'}\,\EE_{y_s^{(0)},y_s^{(1)}}\,
\prod_{\uom\in\{0,1\}^s} 
f_{(\uom',0)}(y_1^{(\om_1)},\ldots,y_{s-1}^{(\om_{s-1})},y_s^{(0)})\,
f_{(\uom',0)}(y_1^{(\om_1)},\ldots,y_{s-1}^{(\om_{s-1})},y_s^{(1)})\big)\\
& \big( \EE_{{\yyo}',{\yyv}'}\,\EE_{y_s^{(0)},y_s^{(1)}}\,
\prod_{\uom\in\{0,1\}^s}
f_{(\uom',1)}(y_1^{(\om_1)},\ldots,y_{s-1}^{(\om_{s-1})},y_s^{(0)})\,
f_{(\uom',1)}(y_1^{(\om_1)},\ldots,y_{s-1}^{(\om_{s-1})},y_s^{(1)})\big)
\end{align*}
Notice that in each factors the functions $f_{\uom}$ depend only on the first $s-1$ components of $\uom$. Repeating the procedure in the other $s-1$ variables give \eqref{4.45}. Then by H\"{o}lder's inequality left side of \eqref{4.3} is estimated by
\begin{align*}
&\langle f_{\om}\rangle_{\Box_M(\uu_1,\ldots,\uu_s)}^{2^s}
= \bigg(\EE_{\ux}\,\EE_{\yyo,\yyv\in [M]^s}\,
\prod_{\uom\in \{0,1\}^s} 
f_{\uom,\ux}(\yomv,\ldots,\yoms)\bigg)^{2^s}\\
&\leq \big(\EE_{\ux}\,\prod_{\uom\in \{0,1\}^s} \|f_{\uom,\ux}\|_{\Box_M}\big)^{2^s}
\leq \prod_{\uom\in \{0,1\}^s} 
\big(\EE_{\ux}\,\|f_{\uom,\ux}\|_{\Box_M}\big)^{2^s}\big)
= \prod_{\uom\in \{0,1\}^s} 
\|f_{\om}\|_{\Box_M(\uu_1,\ldots,\uu_s)}^{2^s}.
\end{align*}
\end{proof}


The following orthogonality property is crucial for the decomposition \eqref{1.6}.

\begin{prop}\label{prop4.1} Let $K,s\geq 1$ and for $1\leq k\leq K$ let $f^k_{\uom}:X\to [-1,1]$ be given for $\uom\in \{0,1\}^s\backslash \{\zero\}$. Let $\QQ=(\uQ_0,\uQ_1,\ldots,\uQ_s)$, $\uQ_0=\zero$, $\uQ_j\in \Z^d[\uh]$ be a family of polynomial maps in general position, satisfying condition \eqref{2.7} for all $1\leq i\leq d$ and $1\leq j<j'\leq s$. Then one has 
\eq\label{4.5}
\langle (\nud-1),\prod_{k=1}^K D_{\QQ}(f^k_{\uom})\rangle :=\EE_{\ux\in X}\ \big(\nud(\ux)-1\big)\cdot \prod_{k=1}^K D_{\QQ}(f^k_{\uom})(\ux) = o(1),
\ee
where the implicit constant may depend on $K,s$ and the polynomial map $\QQ$, 
\end{prop}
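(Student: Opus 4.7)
The plan is to prove this orthogonality estimate by expanding the dual functions, applying Cauchy--Schwarz repeatedly in the cube parameters to strip away the bounded functions $f^k_{\uom}$, and finally invoking the polynomial forms condition (Proposition \ref{prop2.1}) to evaluate the resulting $\nud$-averages. First I would unfold each factor $D_{\QQ}(f^k_{\uom})$ using Definition \ref{defn4.1}, rewriting the left-hand side of \eqref{4.5} as
\[
I = \EE_{\uh^1,\ldots,\uh^K}\, \EE_{\yyo^1,\yyv^1,\ldots,\yyo^K,\yyv^K}\, \EE_{\ux}\, (\nud(\ux)-1) \prod_{k=1}^{K}\prod_{\uom\neq \zero} f^k_{\uom}\Bigl(\ux+\sum_{j=1}^{s} y_j^{(\om_j),k}\,\uQ_j(\uh^k)\Bigr),
\]
in which the integrand is $(\nud(\ux)-1)$ times a product of $K(2^s-1)$ factors, each bounded by $1$ pointwise.

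Next I would apply Cauchy--Schwarz $Ks$ times, once in each pair $(k,j)\in [K]\times[s]$, duplicating the variables $y_j^{(1),k}$ in the standard Gowers fashion. At each step the bound $|f^k_{\uom}|\leq 1$ permits dropping those $f$-factors that become redundant under the duplication. After all $Ks$ iterations I expect a bound of the form
\[
|I|^{2^{Ks}}\ \ll\ \EE_{\text{doubled params}}\ \EE_{\ux}\ \prod_{\tau\in T} \bigl(\nud(\ux+\underline{Z}_{\tau})-1\bigr) + o(1),
\]
where $T$ is a finite index set of size $2^{Ks}$ and the shifts $\underline{Z}_{\tau}$ are polynomial expressions in the doubled $\uh$, $\yyo$, $\yyv$ parameters, arising as differences of the original cube-vertex shifts. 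I would then expand each factor $\nud(\ux+\underline{Z}_{\tau})-1$ and distribute across the product, turning the right-hand side into a signed sum over subsets $S\subseteq T$ of pure $\nud$-averages $\EE\,\EE_{\ux}\prod_{\tau\in S}\nud(\ux+\underline{Z}_{\tau})$. Each such average equals $1+o(1)$ by Proposition \ref{prop2.1}, provided the polynomial map $\tau\mapsto \underline{Z}_\tau$ (restricted to $S$) is non-degenerate in the sense of \eqref{2.7}; the signed sum then collapses to $o(1)$ by inclusion--exclusion.

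The main obstacle, in analogy with the PET doubling analysis of Section 3, is verifying non-degeneracy of the polynomial systems $\{\underline{Z}_\tau\}_{\tau\in T}$ produced by the iterated Cauchy--Schwarz: every coordinate projection $\pi_i(\underline{Z}_\tau-\underline{Z}_{\tau'})$ must be non-constant as a polynomial in the doubled parameters. I would reduce this to the hypothesis that $\QQ$ is in general position with $\pi_i(\uQ_j)\neq 0$ for all $i\in[d]$ and $j\geq 1$ (using that $\uQ_0=\zero$), by a doubling argument in direct analogy with Lemmas \ref{lem3.1}--\ref{lem3.2}. Keeping book-keeping of which cube-vertex differences survive each Cauchy--Schwarz, and checking that their coordinate projections retain nontrivial dependence on the fresh parameters introduced by the duplication, is the most delicate step; once this is in place, Proposition \ref{prop2.1} closes the argument routinely.
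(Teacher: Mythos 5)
Your proposal is essentially correct and tracks the paper's argument: expand the product of dual functions, package the $K$ configurations into a single $Ks$-dimensional box inner product (with the ``extra'' vertices carrying the constant function $1$), apply the Gowers--Cauchy--Schwarz inequality and the bound $|f^k_{\uom}|\leq 1$ to reduce to $\|\nud-1\|_{\Box_M(\uQ_j(\uh_k):\,j\in[s],k\in[K])}^{2^{Ks}}$, then expand the product of $(\nud-1)$ factors and apply Proposition \ref{prop2.1} term by term, with the main terms cancelling by inclusion--exclusion. Two points are worth adjusting. First, the final non-degeneracy check is substantially easier than you suggest, and in particular does \emph{not} require a PET-style doubling analysis in the spirit of Lemmas \ref{lem3.1}--\ref{lem3.2}: for distinct $\uOm,\uOm'\in\{0,1\}^{Ks}$ the difference $\pi_i(\underline{Z}_\uOm-\underline{Z}_{\uOm'})$ contains a summand $(y_{kj}^{(1)}-y_{kj}^{(0)})\,\pi_i(\uQ_j(\uh_k))$ for some $(k,j)$ with $\om_{kj}\neq\om'_{kj}$, and since $\pi_i(\uQ_j)\neq 0$ as a polynomial (from general position together with $\uQ_0=\zero$) this summand is a nonzero polynomial in the combined variables $(y,\uh)$ involving a fresh $y_{kj}$ that no other $(k',j')$-summand can cancel. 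Condition \eqref{2.7} for the shift system is therefore immediate; the subtle doubling book-keeping you worry about simply does not arise because each vertex pair of the $Ks$-cube already separates by a distinct $y$-variable. Second, the paper also disposes of a negligible exceptional set of tuples $(\uh_1,\ldots,\uh_K)$ where some $\pi_i(\uQ_j(\uh_k))$ happens to vanish, using the Schwarz--Zippel lemma to show this set has relative measure $O(H^{-1})=o(1)$; your sketch omits this cleanup step, which is minor but should be included for a complete argument.
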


\begin{proof}  By expanding the product of the dual functions, we have 

\eq\label{4.6}
\prod_{k=1}^K D_{\QQ}(f^k_{\uom})\,(\ux) =
\EE_{\uh_1,\ldots,\uh_k}\,
\EE_{\substack{\yyo_1,\ldots, \yyo_k\\ \yyv_1,\ldots,\yyv_K}}\
\prod_{k=1}^K \prd_{\uom\in \{0,1\}^s} 
f^k_{\uom}\,\big(\ux+y_{k1}^{(\om_1)}\uQ_1(\uh_k)+\ldots +y_{ks}^{(\om_s)}\uQ_s(\uh_k)\big),
\ee

where we have written $\prd_{\om}$ for the restricted product $\prod_{\uom\in \{0,1\}^s,\uom\neq 
\zero}$ for simplicity of notations.\\

Let $B_{K,t}(H)$ be the set of those $k$-tuples of vectors
$(\uh_1,\ldots,\uh_K) \in [H]^{kt}$ such that
$\pi_i(\uQ_j(\uh_k))=0$ for some $i\in [d],\,j\in [s]$ and $k\in [K]K$. Then $|B_{K,t}(H)|\ll H^{kt-1}$ by the Schwarz-Zippel lemma, see \cite[Lemma D.3]{TZ08}, since $\pi_i(\uQ_j(\uh))$ is not zero by our assumption. Thus the contribution of the $K$-tuples that are in $B_{k,t}(H)$ to the right side of \eqref{4.4} is $O(H^{-1})=o(1)$.\\

Let $(\uh_1\,\ldots,\uh_K)\notin B_{k,t}(H)$ be a fixed $K$-tuple. We estimate 
\eq\label{4.7}
\EE_{\ux}\big(\nud(\ux)-1\big)\ 
\EE_{\substack{\yyo_1,\ldots, \yyo_k\\ \yyv_1,\ldots,\yyv_K}}\
\prod_{k=1}^K \prod^\ast_{\uom\in \{0,1\}^s} 
f^k_{\uom}\,\big(\ux+y_{k1}^{(\om_1)}\uQ_1(\uh_k)+\ldots +y_{ks}^{(\om_s)}\uQ_s(\uh_k)\big).
\ee

\medskip

Note that the product is taken over the union of $K$-distinct parallelepipeds with vertices $\ux+y_{k1}^{(\om_1)}\uQ_1(\uh_k)+\ldots +y_{ks}^{(\om_s)}\uQ_s(\uh_k)$. We may embed these $K$ parallelepipeds into a $Ks$-dimensional parallelepiped with vertices of the form
\[
\ux+ \sum_{j=1}^s \sum_{k=1}^K\ 
y_{k1}^{(\om_{k1})}\uQ_1(\uh_k)+\ldots +y_{ks}^{(\om_{ks})}\uQ_s(\uh_k),
\]
corresponding to 
$\underline{\Om}=(\uom_1,\ldots,\uom_K)$, $\uom_k=(\om_{k1},\ldots,\om_{ks})\in\{0,1\}^s$. 
Define $f_{\uOm}=1$ unless 
$\Om=(\zero,\ldots,\zero,\uom,\zero,\ldots,\zero)$, i.e. when $\uom_j=\zero$ for all $j\neq k$ and $\uom_k=\uom$; in which case let $f_{\uOm}=f^k_{\uom}$. In other words, to all the remaining vertices of $Ks$-dimensional parallelepiped we attach weight 1. Then the expression in \eqref{4.7} may be written as 
\begin{align}\label{4.8}
&\EE_{\ux}\,\big(\nud(\ux)-1\big)\ 
\EE_{\yyo_1,\ldots, \yyo_K, \yyv_1,\ldots,\yyv_K}\ 
\prod_{\uOm\in\{0,1\}^{Ks}}
f_{\uOm}\,
\big(\ux +\sum_{k=1}^K\sum_{j=1}^s 
y_{kj}^{(\om_{kj})} \uQ_j(\uh_k)\big)
\\
&= \big\langle \nud -1, f_{\uOm} 
\big\rangle_{\Box_M (\uQ_j(\uh_k):\,j\in [s], k\in [K])}.\nonumber
\end{align}

\medskip

By the Gowers-Cauchy-Schwarz inequality for box norms we have that 
\eq\label{4.9}
\big|\big\langle \nud -1, f_{\uOm} 
\big\rangle_{\Box_M (\uQ_j(\uh_k):\,j\in [s], k\in [K])}
\big| \leq 
\|\nud -1\|_{\Box_M (\uQ_j(\uh_k):\,j\in [s], k\in [K])},
\ee

as $|f_\Om|\leq 1$ and hence $\,\|f_{\uOm}\|_{\Box_M (\uQ_j(\uh_k):\,j\in [s], k\in [K])}\leq 1\,$ for all $\Om\in \{0,1\}^{Ks}$. Thus by H\"{o}lder's inequality the left side of \eqref{4.6} is estimated by 
\begin{align*}
&\EE_{\uh_1,\ldots,\uh_k}\|\nud -1\|_
{\Box_M (\uQ_j(\uh_k):\,j\in [s], k\in [K])}
\leq 
\bigg(\EE_{\uh_1,\ldots,\uh_K}\|\nud -1\|^{2^{Ks}}_{\Box_M (\uQ_j(\uh_k):\,j\in [s], k\in [K])}\bigg)^{\frac{1}{2^{Ks}}}
=o(1),
\end{align*}
\medskip

by the polynomial forms condition \eqref{2.8} applied in the variables $\ux_i,y_{kj}^{(\si)},\uh_{k}$ $(i\in [d],j\in [s],\,k\in [K])$ to the family of polynomials
$\ x_i +\sum_{k=1}^K\sum_{j=1}^s\,
y_{kj}^{(\om_{kj})} \pi_i\big(\uQ_j(\uh_k)\big)$.
\end{proof}

\medskip

Using the orthogonality property \eqref{4.5} one can obtain the crucial decomposition \eqref{1.6} of an unbounded function $0\leq f\leq \nud$ via an abstract decomposition theorem which appears implicitly in \cite{GT06} and is stated in various essentially explicit forms in \cite{RTTV08,Gow10,TZ14}. We recall the statement from \cite{TZ14} and for the sake of completeness provide a proof, which is a slight modification of an argument of Gowers \cite[Theorem 4.8]{Gow10}. 


\begin{thmD}{(Dense model theorem \cite{TZ14})}\label{thmD}

Let $\eps>0$ and let $\mF$ be a set of bounded functions $F:X\to [-1,1]$. Then there exists a constant $C>0$ and a constant $\eta>0$ depending only on $\eps$ such that the following holds; If a function $\nu:X\to\R_{\geq 0}$ obeys the bound 
\eq\label{4.10}
\EE_{\ux\in X} 
\big(\nu(\ux)-1\big)\,F_1\cdot\ldots\cdot F_K(x)\leq\eta,
\ee
\medskip
for all $K\leq \eps^{-C}$ and $F_j\in\mF$, then to every function $0\leq f\leq \nu$ there exists a function $0\leq g\leq 2$ such that 
\eq\label{4.11}
\big| \EE_{\ux\in X} 
\big( f(\ux)-g(\ux)\big)\,F(\ux)\,\big| \leq \eps,
\ee
for all $F\in\mF$.
\end{thmD}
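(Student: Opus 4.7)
The plan is to prove this by the Hahn--Banach / polynomial-approximation strategy of Gowers \cite[Theorem 4.8]{Gow10}. First, by replacing $\mF$ with $\mF\cup(-\mF)$, which leaves both hypothesis and conclusion unchanged, we may assume $\mF=-\mF$ and interpret the hypothesis as the two-sided bound $|\EE_{\ux}(\nu(\ux)-1)F_1\cdots F_K(\ux)|\leq \eta$. Given $0\leq f\leq \nu$, suppose for contradiction that no admissible $g$ exists. This is equivalent to $f\notin K_1+K_2$, where $K_1:=\{g:X\to\R,\ 0\leq g\leq 2\}$ and $K_2:=\{h:X\to\R,\ |\langle h,F\rangle|\leq \eps\ \forall F\in\mF\}$ are closed convex sets in the finite-dimensional space $\R^X$. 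By the Hahn--Banach separation theorem there is a function $\phi:X\to\R$ satisfying $\langle \phi,f\rangle>\sup_{g\in K_1}\langle\phi,g\rangle+\sup_{h\in K_2}\langle\phi,h\rangle$.

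Computing the two support functions, $\sup_{g\in K_1}\langle\phi,g\rangle=2\|\phi_+\|_1$, where $\phi_+:=\max(\phi,0)$ and norms are with respect to the uniform measure on $X$; and by standard duality $\sup_{h\in K_2}\langle\phi,h\rangle=\eps\,\|\phi\|_{\mF^*}$, where
$$\|\phi\|_{\mF^*}:=\inf\Bigl\{\textstyle\sum_i|c_i|\ :\ \phi=\sum_i c_iF_i,\ F_i\in \mF\Bigr\}.$$
Finiteness of the second supremum forces $\phi$ to lie in the linear span of $\mF$; rescaling so $\|\phi\|_{\mF^*}\leq 1$, fix a representation $\phi=\sum_ic_iF_i$ with $\sum_i|c_i|\leq 1$. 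Then $\|\phi\|_\infty\leq 1$ and the separation reads $\langle\phi,f\rangle > 2\|\phi_+\|_1 + \eps$.

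The next step is to approximate $\phi_+$ by a polynomial in $\phi$. Since $t\mapsto t_+$ is $1$-Lipschitz, Jackson's theorem yields, for any $\eps'>0$, a polynomial $P(t)=\sum_{k=0}^K a_k t^k$ of degree $K=K(\eps')=O(1/\eps')$ with $|t_+-P(t)|\leq \eps'$ uniformly on $[-1,1]$ and absolute coefficient sum $M(\eps'):=\sum_k|a_k|$ depending only on $\eps'$. Since $|\phi|\leq 1$ pointwise, $|\phi_+-P(\phi)|\leq \eps'$ pointwise. Using $0\leq f\leq \nu$ and $\phi_+\geq \phi$,
$$\langle\phi,f\rangle\ \leq\ \langle\phi_+,\nu\rangle\ =\ \|\phi_+\|_1+\langle\phi_+,\nu-1\rangle\ \leq\ \|\phi_+\|_1+3\eps'+M(\eps')\,\eta.$$
Here we split $\phi_+=P(\phi)+(\phi_+-P(\phi))$: the error part contributes at most $\eps'\|\nu-1\|_1\leq 3\eps'$ (using $\EE[\nu]\leq 1+\eta\leq 2$, i.e.\ the $K=0$ instance of the hypothesis), while the polynomial part is controlled by expanding $P(\phi)=\sum_k a_k(\sum_i c_iF_i)^k$ into a signed sum of products $F_{i_1}\cdots F_{i_k}$ with $k\leq K$ and total absolute coefficient mass $\sum_k|a_k|(\sum_i|c_i|)^k\leq M(\eps')$, after which the two-sided hypothesis on each product yields $|\langle P(\phi),\nu-1\rangle|\leq M(\eps')\eta$, legal as soon as $K\leq \eps^{-C}$.

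Combining with the separation inequality and using $\|\phi_+\|_1\geq 0$ gives $\eps < 3\eps' + M(\eps')\eta$. Setting $\eps':=\eps/6$ and $\eta:=\eta(\eps):=\eps/(4M(\eps/6))$ produces the contradiction $\eps < \eps/2 + \eps/4 < \eps$; choosing $C$ so that $K(\eps/6)\leq \eps^{-C}$ (e.g.\ $C=2$) closes the argument. The main technical wrinkle is the coefficient bound $M(\eps')$: a uniform approximation of the Lipschitz function $t_+$ on $[-1,1]$ of degree $O(1/\eps')$ generally has $M(\eps')$ exponential in $1/\eps'$, so the required $\eta$ is exponentially small in $1/\eps$. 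This does not affect the qualitative existence of $\eta>0$ asserted in the theorem, though it dictates the quantitative relationship between $\eta$ and $\eps$ when the decomposition is later applied with $\eps$ chosen as a function of the density $\de$.
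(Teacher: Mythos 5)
Your proof is correct and follows essentially the same Hahn--Banach / polynomial-approximation route as the paper: the paper invokes Gowers' Corollaries 3.2 and 3.5 to produce the separating functional $\phi$ and its $\ell^1$-representation over $\mF$, and uses Weierstrass approximation on $[-2\eps^{-1},2\eps^{-1}]$ where you normalize $\|\phi\|_{\infty}\le 1$ and invoke Jackson on $[-1,1]$; these are cosmetic differences. The one imprecision is the opening claim that replacing $\mF$ by $\mF\cup(-\mF)$ ``leaves the hypothesis unchanged'' --- it actually \emph{strengthens} the hypothesis to the two-sided bound $|\EE_{\ux}(\nu(\ux)-1)F_1\cdots F_K(\ux)|\le\eta$, so this is not a genuine WLOG reduction; however, the paper's own proof also silently uses the two-sided bound (it estimates $|\langle\nu-1,P\phi\rangle|$), and the two-sided form is exactly what Proposition \ref{prop4.1} delivers when the theorem is applied, so the wrinkle is shared and harmless in context.
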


This means that if the function $\nu-1$ is approximately  orthogonal to long products of functions $F_i\in \mF$ then every function $0\leq f\leq \nu$ can be approximated by a bounded function $g$ so that $f-g$ is approximately orthogonal to all function $F\in\mF$. If $1 \in\mF$ (as in most cases of interest) then $f$ and $g$ has approximately the same average, and $g$ is referred to as a dense model of $f$ for the family of functions $\mF$. 

\begin{proof}

Let $\epsilon>0$, and define $J: \R\to \R$ by $J(x)=\frac{x+|x|}{2}$. By the Weirerstrass approximation theorem, there exists a polynomial $P(x)=\sum_{i=0}^j a_i x^i$ such that 
\begin{equation}|P(x)-J(x)|<\frac{1}{8}\label{approximation} \end{equation}
 for all $x\in [-2\eps^{-1}, 2\eps^{-1}]$. Let $C$ be large enough that $\eps^{-C}>j$, and let $\eta=\frac{1}{4}\left(\sum_{i=0}^j |a_i| (2\eps^{-1})^i\right)^{-1}$.

Let $\nu:X\to\R_{\geq 0}$ be a function satisfying \eqref{4.10}, and suppose for the sake of contradiction there exists a function $0\leq f\leq \nu$ which cannot be written as $g+h$ where $0\leq g\leq 2$ and 
\[\sup_{F\in \mF} \big| \EE_{\ux\in X} h(\ux)F(\ux)\big| \leq \eps.\]
By the Hahn-Banach theorem (specifically \cite[Corollary 3.2]{Gow10} ), there exists $\phi:X\to \R$ such that $\langle f, \phi \rangle >1$, $\langle g, \phi\rangle < 1\slash 2$
for every $0\leq g \leq 1$, and $\langle h,\phi \rangle \leq \eps^{-1}$ for every $h: X\to \R$ such that 
\[\sup_{F\in \mF} \big| \EE_{\ux\in X} h(\ux)F(\ux)\big| \leq 1.\]
The above condition forces $\phi$ to be in the subspace spanned by the functions $F\in \FF$, and then by\\ \cite[Corollary 3.5]{Gow10}, we may write $\phi = \sum_{i=1}^s \la_i F_i$ 
where $F_1,\ldots,F_s\in \mF$ and \begin{equation}\sum_{i=1}^s |\la_i|<2\eps^{-1}\label{phi decomp}.\end{equation} 

By \eqref{4.10} and some manipulation, we have 

\begin{align}
\big|\langle \nu-1, P\phi \rangle\big| &\leq \sum_{i=0}^j |a_i|\big|\big\langle \nu-1, \bigg(\sum_{k=1}^s \la_k F_k\bigg)^i\big\rangle\big| \\
&\leq \eta \sum_{i=0}^j |a_i|\left(\sum_{k=1}^s |\lambda_k|\right)^i \nonumber\\
&\leq \eta \sum_{i=0}^j |a_i|\left(2\eps^{-1}\right)^i\nonumber\\ 
&= 1/4. \label{pphi bound}\nonumber
\end{align}

Since $\langle g, \phi \rangle\leq 1\slash 2$ for every $0\leq g\leq 1$, we have $\langle 1, J\phi \rangle\leq 1\slash 2$. By \eqref{phi decomp}, it is clear that $|\phi|<2\eps^{-1}$ pointwise, and so by \eqref{approximation}, 
 $||P\phi -J\phi||_{\infty}\leq 1\slash 8$. Combining these facts, we have $\langle 1,P\phi \rangle \leq 5\slash 8$. Hence, by \eqref{pphi bound}, $\langle \nu, P\phi \rangle \leq 7\slash 8$. Finally, once again using the fact that 
$||P\phi -J\phi||_{\infty}\leq 1\slash 8$, we have $\langle \nu, J\phi \rangle \leq 1$. But then 
\[\langle f, \phi \rangle \leq \langle f, J\phi \rangle \leq \langle \nu, J\phi \rangle \leq 1,\]
contradicting the fact that $\langle f, \phi \rangle >1$.

\end{proof}

Applying Proposition 4.1 and Theorem D to the family $\mF :=\{D_{\QQ}(f_{\uom})_{ \uom\in\{0,1\}^s\backslash\{\zero\}},\,f_{\uom}:X\to [-1,1]\}$ where $\QQ=(\uQ_0,\uQ_1,\ldots,\uQ_s)$ is a family of integral polynomials, we have 

\begin{cor}  Let $s,t\geq 1$ and let $\QQ=(\uQ_0,\uQ_1,\ldots,\uQ_s)$, $\uQ_0=\zero$, $\uQ_j\in \Z^d[h_1,\ldots,h_t]$ be a family of polynomials in general position. Given $\eps>0$, if $\,N\geq N(\QQ,t,s,\eps)\,$ are sufficiently large then to every function $0\leq f\leq \nud$ there exists a function $0\leq g\leq 2$ s.t.
\eq\label{4.12}
\big| \EE_{\ux\in X} \big( f(x)-g(\ux)\big) D_{\QQ} (f_{\uom})(\ux) \big| \leq \eps, 
\ee

\medskip

for all families of functions 
$\big(f_{\uom}\big)_{ \uom\in\{0,1\}^s\backslash\{\zero\}}$, $f_{\uom}:X\to [-1,1]$.
\end{cor}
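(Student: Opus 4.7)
The plan is to apply Theorem D directly, with $\nu = \nud$ and with the test family
\[
\mF := \{\, D_{\QQ}(f_{\uom}) \;:\; (f_{\uom})_{\uom\in\{0,1\}^s\setminus\{\zero\}},\ f_{\uom}:X\to[-1,1]\,\},
\]
so that the conclusion \eqref{4.11} of the dense model theorem, specialized to this $\mF$, becomes exactly the desired bound \eqref{4.12}.

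First I would check that $\mF$ actually fits the framework of Theorem D, i.e.\ that every $F\in\mF$ satisfies $F:X\to[-1,1]$. This is immediate from Definition \ref{defn4.1}: $D_{\QQ}(f_{\uom})(\ux)$ is an average over $\uh,\yyo,\yyv$ of a product of values of the $f_{\uom}$'s, each of which lies in $[-1,1]$, so $|D_{\QQ}(f_{\uom})|\leq 1$ pointwise.

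Next, given $\eps>0$, let $C=C(\eps)$ and $\eta=\eta(\eps)>0$ be the constants furnished by Theorem D. To invoke the theorem I must verify the orthogonality hypothesis \eqref{4.10} for $\nud$, namely that for every $K\leq\eps^{-C}$ and every choice $F_1,\dots,F_K\in\mF$ one has
\[
\bigl|\EE_{\ux\in X}\bigl(\nud(\ux)-1\bigr)\,F_1(\ux)\cdots F_K(\ux)\bigr|\leq\eta.
\]
Each $F_k$ is of the form $D_{\QQ}(f^k_{\uom})$ for some family $(f^k_{\uom})_{\uom\neq\zero}$ of $[-1,1]$-valued functions, so this left-hand side is precisely the quantity appearing on the left of \eqref{4.5}. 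Proposition \ref{prop4.1} then provides the bound $o(1)$ as $N\to\infty$, with implicit constants depending only on $K$, $s$ and $\QQ$. Since $K\leq\eps^{-C}$ is controlled by $\eps$, and $s,\QQ$ are fixed, choosing $N\geq N(\QQ,t,s,\eps)$ sufficiently large makes this $o(1)$-quantity at most $\eta$ uniformly over all admissible $F_1,\dots,F_K$.

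With \eqref{4.10} verified, Theorem D produces, for each $0\leq f\leq\nud$, a function $0\leq g\leq 2$ such that
\[
\bigl|\EE_{\ux\in X}(f(\ux)-g(\ux))\,F(\ux)\bigr|\leq\eps \quad\text{for all } F\in\mF,
\]
which unfolds to \eqref{4.12}. The only non-bookkeeping point in the argument is the observation $|D_{\QQ}(f_{\uom})|\leq 1$; the real content has already been shouldered by Proposition \ref{prop4.1} (whose proof relies on the polynomial forms condition for $\nud$) and by the abstract Hahn--Banach decomposition of Theorem D. There is no further obstacle: the corollary is essentially the specialization of Theorem D to the family of dual functions for which Proposition \ref{prop4.1} supplies the requisite orthogonality.
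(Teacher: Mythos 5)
Your proposal is correct and is essentially the paper's own argument: the paper states the corollary as an immediate consequence of applying Proposition \ref{prop4.1} and Theorem D to the family $\mF$ of dual functions, which is exactly your route. Your explicit check that $|D_{\QQ}(f_{\uom})|\leq 1$ (so that $\mF$ fits the hypotheses of Theorem D) is a detail the paper leaves implicit, and your handling of the $o(1)$ bound from Proposition \ref{prop4.1} versus the threshold $\eta$ is the right bookkeeping.
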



At this point, for the function $0\leq f_A\leq \nud$ we may obtain a decomposition $f_A =g+\fh$ with $0\leq g\leq 2$ and the function $\fh$ satisfying
\eq\label{4.13}
\big| \EE_{\ux\in X} \fh(\ux) D_{\QQ} (f_{\uom})(\ux)\big|\leq\eps,
\ee
for all dual functions $D_{\QQ} (f_{\uom})$ corresponding to bounded functions $f_{\uom}:X\to [-1,1]$. Note that $\fh$ may not be bounded independently of $N$ but $|\fh|\leq \nud+2$. If \eqref{4.11} would hold for $D_{\QQ}(\fh)$ i.e. when $f_{\uom}=\fh$ for all $\uom\in\{0,1\}\backslash \{\zero\}$, then we would have 
\eq\label{4.12}
\|\fh\|_{\Box_{H,M}(\uQ_1,\ldots,\uQ_s)} \leq \eps.
\ee
Hence by \eqref{1.5} and multi-linearity
\eq\label{4.15}
\La_{\mP,V}(f_A,\ldots,f_A)=\La_{\mP,V}( g,\ldots,g)
+O(\eps^c)+o(1),
\ee
where $c>0$ is a constant depending on the initial data $\mP$ and $V$. The Bergelson-Liebman theorem \cite{BL96} implies (see next section), 
\eq\label{4.16}
\La_{\mP,V}( g,\ldots,g)\geq c(\de)+o(1),
\ee
where $c(\de)>0$ is a constant depending on the initial data but is independent of $N$. Thus choosing $\eps>0$ sufficiently small we have that 
$\La_{\mP,V}(f_A,\ldots,f_A)>0$ which implies our main result.\\

The idea that the bounded functions $f_{\uom}$ can be replaced by the unbounded function $\fh$ appeared first in \cite{CFZ15}. The proof is essentially the same as that of \cite[Theorem 11]{TZ14} and include it here only for the sake of completeness. Under the same conditions as in Proposition \ref{prop4.1}, we have 

\begin{prop}\label{prop4.2} Let $\eps>0$ and let $\fh:X\to\R$ s.t. $|\fh|\leq \nud+2$. If 
\eq\label{4.17}
\big|\EE_{\ux\in X}\ \fh(\ux)\,D_{\QQ}(f_{\uom})(\ux)\big| \leq \eps+o(1),
\ee
for all bounded families of functions $(f_{\uom})_{\uom\in\{0,1\}^s\backslash{\{\zero\}}}$, $f_{\uom}:X\to [-1,1]$, then 
\eq\label{4.18}
\|\fh\|_{\Box_{H,M}(\uQ_1,\ldots,\uQ_s)} \leq \eps^C +o(1)
\ee
for some constant $C>0$ that is independent of $\eps$.
\end{prop}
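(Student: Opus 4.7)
The identity \eqref{4.2} gives
\[
\|\fh\|^{2^s}_{\Box_{H,M}(\uQ_1,\ldots,\uQ_s)} \;=\; \langle \fh,D_\QQ(\fh)\rangle,
\]
where $D_\QQ(\fh)$ is the dual function obtained by setting every input $f_{\uom}=\fh$ for $\uom\in\{0,1\}^s\setminus\{\zero\}$. My plan is to show $|\langle \fh,D_\QQ(\fh)\rangle|\leq C_s\,\eps+o(1)$ for some $C_s>0$ depending only on $s$; taking $2^s$-th roots then yields \eqref{4.18} with $C=1/2^s$. The basic obstruction is that the $2^s-1$ inputs to $D_\QQ(\fh)$ are unbounded — only majorised by $\nud+2$ — so the hypothesis \eqref{4.17}, which requires bounded inputs, does not apply directly. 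The strategy is to trade each unbounded slot for a bounded one via the Dense Model Theorem, paying a factor of $\eps$ each time.

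The first step is to decompose $\fh=g+\tilde h$ with $|g|\leq C_0$ pointwise and $|\langle \tilde h,D_\QQ(f_{\uom})\rangle|\leq C_1\eps+o(1)$ for every bounded family $(f_{\uom})$. This is obtained by splitting $\fh=\fh_+-\fh_-$ with $0\leq \fh_\pm\leq \nud+2$ and applying Theorem~D (or the Corollary following it) separately to $\fh_\pm/3$ with pseudo-random majorant $(\nud+2)/3$ — the latter satisfies the polynomial forms condition \eqref{2.8}, as the constant $2$ expands out into a sum of products of $\nud$'s and constants each of which is $1+o(1)$ by Proposition~\ref{prop2.1}.

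Next, I expand by multilinearity
\[
\langle \fh, D_\QQ(\fh)\rangle \;=\; \sum_{S\subseteq \{0,1\}^s\setminus\{\zero\}}
\langle \fh, D_\QQ(f^S_{\uom})\rangle,\qquad
f^S_{\uom}=\begin{cases} g,&\uom\in S,\\ \tilde h,&\uom\notin S.\end{cases}
\]
For $S=\{0,1\}^s\setminus\{\zero\}$ every slot is bounded by $C_0$, and the hypothesis \eqref{4.17} on $\fh$ gives $|\langle \fh, D_\QQ(g,\ldots,g)\rangle|\leq C_0^{2^s-1}\eps+o(1)$. For any other $S$, choose a slot $\uom^*\notin S$ carrying $\tilde h$ and use the symmetry of the parallelepiped: the affine change of variables $\ux\mapsto \ux-\sum_j y_j^{(\om^*_j)}\uQ_j(\uh)$ combined with the relabelling $y_j^{(0)}\leftrightarrow y_j^{(1)}$ in every coordinate $j$ with $\om^*_j=1$ turns $\langle \fh, D_\QQ(f^S_{\uom})\rangle$ into $\langle \tilde h,D_\QQ(f'_{\uom})\rangle$, where the outer factor is now $\tilde h$ (with its small inner product against duals of bounded families) and the former outer $\fh$ has moved to the slot $\uom^*$ inside $D_\QQ$. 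I then iterate: every time an unbounded $\fh$ remains inside $D_\QQ$, split it again as $g+\tilde h$ and apply the same symmetry trick to absorb the $\tilde h$ portion against the outer factor's small-dual-norm bound. Each round strictly decreases the number of unbounded inputs, so the recursion terminates after at most $2^s-1$ rounds; at the end, every contributing term is of the form $\langle\ast,D_\QQ(\text{bounded})\rangle$ with $\ast\in\{\fh,\tilde h\}$ and is therefore $O(\eps)+o(1)$.

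The main obstacle is the bookkeeping of the recursion: at each stage one must verify that the polynomial system remains in general position after the affine shift, so that the polynomial forms condition of Proposition~\ref{prop2.1} continues to apply and the orthogonality statement of Proposition~\ref{prop4.1} still holds for the duals that appear. This follows from the fact that the shift $\ux\mapsto \ux-\sum_j y_j^{(\om^*_j)}\uQ_j(\uh)$ is linear in $\ux$ and does not disturb the leading-term structure of $\QQ$, essentially as in the doubling calculation of Lemma~\ref{lem3.1}. Summing the $2^{2^s-1}$ contributions and combining with the at most $2^s-1$ layers of the recursion yields $|\langle \fh,D_\QQ(\fh)\rangle|\leq C_s\,\eps+o(1)$ and hence \eqref{4.18}.
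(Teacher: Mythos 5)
The central difficulty in this proposition is that all $2^s$ slots of the box norm are unbounded, while the hypothesis \eqref{4.17} (and the dense-model property of any $\tilde h$ you extract) only controls inner products against duals of \emph{bounded} families. Your symmetry trick does not actually reduce this: swapping $y_j^{(0)}\leftrightarrow y_j^{(1)}$ on the support of $\uom^*$ is a relabelling of the $2^s$ slots of the Gowers--box inner product, so it permutes which function sits outside $D_{\QQ}$ but leaves the total count of unbounded slots unchanged. After the first swap you have $\langle\tilde h, D_\QQ(f'_\uom)\rangle$ with $f'_{\uom^*}=\fh$ and the positions corresponding to $T\setminus\{\uom^*\}$ still carrying $\tilde h$; splitting the single $\fh$ inside removes at most one unbounded slot, and the residual $\tilde h$'s cannot be split further (decomposing $\tilde h$ via the dense model theorem again yields another unbounded residual, which is where you started). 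So for any term of the multilinear expansion with $|T|\geq 2$ you are stuck with an outer $\tilde h$ and at least one inner $\tilde h$, and none of your tools --- \eqref{4.17}, the dense-model orthogonality, or the symmetry --- applies. The claim ``each round strictly decreases the number of unbounded inputs'' is therefore not justified, and the recursion does not terminate.

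The paper's proof avoids this by a different mechanism: it inducts on the number $n$ of unbounded slots in $\langle f_\uom\rangle_{\Box_{H,M}}$. When the outer slot $f_\zero$ is unbounded it is bounded pointwise by $\nud$, and after a scalar Cauchy--Schwarz the problem reduces to $\EE_\ux D_\QQ^2(f_\uom)$ plus a $\nud-1$ correlation that is $o(1)$ by the polynomial forms condition applied to the doubled system $\QQ'$. The key step you are missing is the interpolation bound on $\EE_\ux D_\QQ^2$: write
$\EE_\ux|D_\QQ(f_\uom)|=\sup_{|g|\leq1}\langle g,D_\QQ(f_\uom)\rangle$,
which \emph{replaces} the unbounded outer slot by a bounded supremum while leaving the inner slots untouched, hence has $n-1$ unbounded slots and is small by the inductive hypothesis; combine this $L^1$ bound with the $L^4$ bound $\EE_\ux D_\QQ^4(f_\uom)\ll1$ (again from the polynomial forms condition) to control $\EE_\ux D_\QQ^2$. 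This is what makes the induction genuinely decrease, at the cost of a worse exponent $C$ than your $2^{-s}$. To repair your argument you would need something playing the role of this $L^1$--$L^4$ interpolation; the dense-model decomposition and vertex symmetry alone do not suffice.
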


\begin{proof} By multi-linearity it is enough to show that 
\eq\label{4.20}
\langle f_{\uom}\rangle_{\Box_{H,M}(\uQ_1,\ldots,\uQ_s)}\ll
\eps^C + o(1)
\ee
whenever $|f_{\uom}|\leq 1$ or $|f_{\uom}|\leq\nud$, assuming that at least one of the functions $f_{\uom}=\fh$. We prove \eqref{4.20} on the number $n$ unbounded functions $f_{\uom}$.\\

If $n=1$ we may assume $f_{\zero} =\fh$ and \eqref{4.20} follows from \eqref{4.16}. If $n\geq 2$ we may assume again that $|f_{\zero} |\leq\nud$ is unbounded and write 
$\ \big|\langle f_{\uom}\rangle_{\Box(\QQ)}\big|\leq \EE_{\ux} \nud (x) |D_{\QQ}(f_{\uom})\big|$. By the Cauchy-Schwarz inequality, we have 
\[
\big|\langle f_{\uom}\rangle_{\Box_{H,M}(\uQ_1,\ldots,\uQ_s)}\big|^2 \leq
\EE_{\ux}\,D_{\QQ}^2(f_{\uom})(x)+ 
\EE_{\ux}\,\big(\nud(\ux)-1\big)\,D_{\QQ}^2(f_{\uom})(x):= I + II.
\]
To estimate the first term note that 
\[
\EE_{\ux}\,D_{\QQ}^4(f_{\uom})(x)\leq
\EE_{\ux}\,D_{\QQ}^4(\nud)(x) \ll 1,
\]
by the polynomial forms condition and the assumption that the family $\QQ=(\uQ_1,\ldots,\uQ_s)$ is in general position. On the other hand 
\[
\EE_{\ux}\,\big|D_{\QQ}(f_{\uom})(x)\big|\leq \eps^C + o(1),
\]
as it is equivalent to the fact that
\[
\EE_{\ux}\,D_{\QQ}(f_{\uom})(x)\,g(x0 \leq \eps^C + o(1),
\]
uniformly for all functions $|g|\leq 1$ which follows from the inductive hypothesis on the number of unbounded factors. Then $I \leq \eps^C +o(1)$ by interpolation.\\

For the second term we may write again,
\begin{align*}
D_{\QQ}^2(f_{\uom})(x) &= \EE_{\uh,\uh'}\,
\EE_{\substack{\uy_1^{(0)},\uy_1^{(1)}\\
\uy_2^{(0)}\uy_2^{(1)}}}
\prd_{(\uom,\uom')\in\{0,1\}^{2s}}
f_{\uom}\big(\ux+\sum_{j=1}^s y_{1j}^{(\om_j)} \uQ_j(\uh)big)\,
f_{\uom}\big(\ux+\sum_{j=1}^s y_{2j}^{(\om_j)} \uQ_j(\uh')\big)\\
&= 
\EE_{\uh,\uh'}\,
\EE_{\substack{\uy_1^{(0)},\uy_1^{(1)}\\
\uy_2^{(0)}\uy_2^{(1)}}}
\prd_{(\uom,\uom')\in\{0,1\}^{2s}}
f_{(\uom,\uom')}\big(\ux+\sum_{j=1}^s y_{1j}^{(\om_j)} \uQ_j(\uh) + \sum_{j=1}^s y_{1j}^{(\om_j')} \uQ_j(\uh')
\big),
\end{align*}

\medskip

where $f_{(\uom,\uom')}=1$, unless $(\uom,\uom')=(\uom,\zero)$ or $(\uom,\uom')=(\zero,\uom)$ (with $\uom\neq \zero$), in which case $f_{(\uom,\uom')}=f_{\uom}$. Writing $\QQ'(\uh,\uh')=(\uQ_1(\uh),\ldots,\uQ_s(\uh),\uQ_1(\uh'),\ldots,\uQ_s(\uh'))$ for the extended polynomial system, we have that by box-norm Cauchy-Schwarz inequality

\begin{align*}
\big| \EE_{\ux}\,
\big(\nud (\ux)-1\big)\,D_{\QQ}^2(f_{\uom})(x)\big| &= 
\big|\langle \nud -1,\,D_{\QQ'}(f_{\uom,\uom'})\rangle\big|\\
& \leq \|\nud -1\|_{\Box_{H,M}(\QQ')}\,
\prod_{(\uom,{\uom}')\neq (\zero,\zero)}
\|f_{\uom,{\uom}'}\|_{\Box_{H,M}(\QQ')}=o(1).
\end{align*}
Indeed the extended system $\QQ'$ is also in general position (as $\pi_i(\uQ_j(\uh)\neq 0$ for $i\in [d],\ j\in [s]$), hence 
$\ \|\nud -1\|_{\Box_{H,M}(\QQ')}=o(1)$, while 
\[\|f_{\uom,{\uom}'}\|_{\Box_{H,M}(\QQ')}\leq
\|\nud \|_{\Box_{H,M}(\QQ')} +1 \ll 1,
\]
\medskip

by the polynomial forms condition. This proves the Proposition.
\end{proof}

\begin{cor}\label{cor4.2} Let $f:X\to\R$ satisfying $0\leq f\leq\nud$ and let $\eps>0$. Then there exists functions $0\leq g\leq 2$ and $\fh$, such that
\[
f=g+\fh,\quad\textit{and}\quad \|\fh\|_{\Box_{H,M}(\QQ)}
\leq \eps.
\]
\end{cor}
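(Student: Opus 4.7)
The plan is to simply chain together the two decomposition tools already established: the dense model theorem (Theorem D) packaged via the corollary after it, and the von Neumann type upgrade (Proposition \ref{prop4.2}) that replaces bounded test functions by the unbounded function $\fh$.

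First, I would absorb the constant $C>0$ from Proposition \ref{prop4.2} by choosing an auxiliary parameter $\eps'>0$ with $(\eps')^C\leq \eps/3$. Apply the corollary immediately preceding Proposition \ref{prop4.2} to the family $\QQ=(\uQ_0,\uQ_1,\ldots,\uQ_s)$ in general position, with threshold $\eps'$. Provided $N$ is large enough in terms of $\QQ,t,s,\eps'$, this yields a decomposition $f=g+\fh$ with $0\leq g\leq 2$ such that
\[
\bigl| \EE_{\ux\in X} \fh(\ux)\, D_{\QQ}(f_{\uom})(\ux)\bigr| \leq \eps'
\]
for every bounded family $(f_{\uom})_{\uom\in \{0,1\}^s\setminus\{\zero\}}$ with $f_{\uom}:X\to[-1,1]$.

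Second, since $0\leq f\leq \nud$ and $0\leq g\leq 2$, we automatically have the pointwise bound $|\fh|\leq \nud+2$, which is the one hypothesis of Proposition \ref{prop4.2}. Applying that proposition with threshold $\eps'$ therefore gives
\[
\|\fh\|_{\Box_{H,M}(\uQ_1,\ldots,\uQ_s)} \leq (\eps')^C + o(1) \leq \eps/3 + o(1).
\]
For $N$ large enough the $o(1)$ term is at most $\eps/3$, so the total is at most $\eps$, as required.

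There is no real obstacle: the two main ingredients have already done all the work. The only mild care needed is the order of quantifiers, namely choosing $\eps'$ first (depending on $\eps$ and on the absolute constant $C$ from Proposition \ref{prop4.2}), then picking $N$ large enough relative to $\QQ,t,s$ and $\eps'$, so that both the dense model approximation and the $o(1)$ error in Proposition \ref{prop4.2} are simultaneously controlled. Note that the resulting threshold $N$ depends only on $\eps,\QQ,t,s$, consistent with the qualitative nature of the corollary.
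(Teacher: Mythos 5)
Your proposal is correct and follows essentially the same route as the paper: first apply the dense model corollary (the one following Theorem D) with a suitably reduced threshold $\eps'$ to obtain $f=g+\fh$, $0\leq g\leq 2$, with $\fh$ nearly orthogonal to all dual functions $D_{\QQ}(f_{\uom})$ for bounded $f_{\uom}$; then invoke Proposition \ref{prop4.2} with the pointwise bound $|\fh|\leq\nud+2$ to upgrade this to $\|\fh\|_{\Box_{H,M}(\QQ)}\leq(\eps')^{C}+o(1)$. The only cosmetic difference is your choice $(\eps')^{C}\leq\eps/3$ (so the $o(1)$ term is explicitly absorbed) versus the paper's $\eps'=\eps^{1/C}$; both are fine.
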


\begin{proof} Let $0\leq g\leq 2$ be given as Corollary \ref{cor4.1} with $\eps'=\eps^{1/C}$. Then by \eqref{4.13} and Proposition \ref{prop4.2} we have that 
\[
\|(f-g)\|_{\Box_{H,M}(\QQ)} \leq \eps+o(1).
\]
The Corollary follows by writing $\fh=f-g$.
\end{proof}

\section{Proof of Theorem 1.} 

The other main ingredient of obtaining polynomial patterns in relative dense subsets of the primes is using an appropriate version of the polynomial extension of Szemer\'{e}di's theorem due to Bergelson-Leibman \cite{BL96}. The version we use, given \cite{TZ08,TZ14}, is as follows.

\begin{thmE} Let $\de>0$, $\mP=(P_1,\ldots,P_l)$, $P_j\in\Z[y],\,P_j(0)=0$ be a polynomial map, and let 
$V=\{\vv_1,\ldots,\vv_l\}\subs\Z^d$.\\

If $g:X\to [0,1]$ is a function satisfying $\EE_{x\in X} g(x)\geq\de$, then one has 
\eq\label{5.1}
\La_{\mP,V}(g,\ldots,g)\geq c(\de)-o(1),
\ee
where $c(\de)>0$ is a constant, that depends only on $\de$, $V$ and polynomial map $\mP$.
\end{thmE}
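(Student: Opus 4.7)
The plan is to reduce Theorem E to the (qualitative) polynomial Szemer\'{e}di theorem of Bergelson--Leibman \cite{BL96} via a standard Varnavides-type random-sampling argument, exploiting the large separation between the ambient scale $N$ and the configuration scale $M=\log^L N$.

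First I would record the finitary/Varnavides strengthening of Bergelson--Leibman which we need: for every $\de>0$, polynomial family $\mP=(P_1,\ldots,P_l)$ and set $V=\{\vv_1,\ldots,\vv_l\}\subs\Z^d$, there exist a threshold $M_0=M_0(\de,\mP,V)\in\N$ and a constant $c_1(\de)>0$ such that for every integer $M_\ast\geq M_0$, every $Y$ with $Y_0(\de)\leq Y$ and $Y^D\|V\|_\infty\leq M_\ast/10$, and every $h:[M_\ast]^d\to[0,1]$ with $\EE h\geq\de/2$,
\[
\EE_{\ux\in [M_\ast]^d,\,y\in [Y]}\prod_{j=1}^l h(\ux+P_j(y)\vv_j)\ \geq\ c_1(\de).
\]
This standard consequence of BL is implicit in \cite{TZ08,TZ14}; crucially the $y$-scale $Y$ need only satisfy its two natural bounds and may be taken much smaller than the spatial scale $M_\ast$.

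Next I would identify $X=(\Z/N\Z)^d$ with $[N]^d$ and pick an intermediate scale $M_\ast := C\,M^D$ with $C=C(\mP,V,D)$ large enough that every polynomial configuration at $y$-scale $M$ has diameter at most $M_\ast/10$. For $N$ sufficiently large one has $M_\ast\geq M_0(\de)$ and $M_\ast\leq\sqrt N$. Sample $\uz\in X$ uniformly at random and consider the sub-box $B_\uz := \uz + [M_\ast]^d \pmod N$. Since $\EE_\uz\EE_{\ux\in B_\uz} g(\ux) = \EE_{\ux\in X} g(\ux)\geq\de$, Markov's inequality yields that at least a $\de/2$-fraction of $\uz\in X$ satisfy $\EE_{\ux\in B_\uz} g(\ux)\geq\de/2$. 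On each such $B_\uz$ the finitary Bergelson--Leibman (applied with $Y=M$) produces
\[
\EE_{\ux\in B_\uz,\,y\in [M]}\prod_{j=1}^l g(\ux+P_j(y)\vv_j)\ \geq\ c_1(\de).
\]

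Averaging over $\uz\in X$ and applying Fubini, the quantity on the left-hand side of the last display, averaged in $\uz$, equals $\La_{\mP,V}(g,\ldots,g)$ up to an $O(M_\ast/N)=o(1)$ wraparound/boundary error coming from base points within distance $M_\ast$ of the ``seam'' of $X$. Combining, we obtain $\La_{\mP,V}(g,\ldots,g)\geq (\de/2)\,c_1(\de) - o(1) =: c(\de)-o(1)$, as claimed. The single non-elementary obstacle is the Varnavides-type upgrade of BL underlying the displayed inequality: one must ensure that $c_1(\de)$ does not degenerate when the $y$-scale $Y$ is taken much smaller than the spatial scale $M_\ast$. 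This is the heart of the matter and is the only place where the polynomial Szemer\'{e}di theorem for bounded functions on $\Z^d$ is genuinely invoked; everything else is a clean pigeonhole reduction.
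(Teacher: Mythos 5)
The paper does not prove Theorem~E: it is quoted verbatim from \cite{TZ08,TZ14} (essentially Theorem~B.3 of \cite{TZ08} and its multidimensional analogue), so you are attempting something the authors deliberately leave as a black box. Your sampling wrapper around that black box, however, does not reduce the difficulty.

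The core problem is that the ``finitary/Varnavides strengthening of Bergelson--Leibman'' you invoke is not a standard consequence of the qualitative BL theorem --- it is, up to cosmetic changes, exactly Theorem~E. Since you place no upper bound on $M_\ast$ relative to $Y$, the cited lemma can be applied directly with $M_\ast \approx N$ (say $M_\ast = N - O(\sqrt N)$ to stay inside the box) and $Y=M$, yielding the conclusion immediately and rendering the Markov/sampling step a no-op. If instead you intended the lemma to hold only in the ``snug'' regime $M_\ast \asymp Y^D$, then the sampling is genuinely needed, but then the lemma is precisely where all the content sits, and it is \emph{not} derivable from qualitative BL by an elementary Varnavides tiling. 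The classical Varnavides argument relies on affine rescaling of the pattern, which works for linear configurations but fails for polynomial ones since $P_j(by)\neq b^{k}P_j(y)$; this is exactly why \cite{TZ08}, Appendix~B proves the uniform-in-scale statement by a compactness/correspondence argument against the \emph{infinitary} Bergelson--Leibman multiple recurrence theorem, not by tiling. Concretely, if you only had qualitative BL on a box of side $M_\ast = CM^D$ with density $\ge \delta/2$, you would obtain one configuration with some $y\ge 1$, but that $y$ could be as large as a power of $M_\ast$ --- far beyond $M$ --- so nothing constrains it to the narrow range $[M]$ that Theorem~E demands.

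In short: the pigeonhole reduction you supply is correct but vacuous, and ``the single non-elementary obstacle'' you honestly flag at the end \emph{is} the theorem. A self-contained proof would need to reproduce the \cite{TZ08} Appendix~B scheme: suppose for contradiction there exist $\delta,\epsilon>0$, sequences $N_i,M_i\to\infty$, and $g_i:X_i\to[0,1]$ with $\EE g_i\ge\delta$ and $\La_{\mP,V}(g_i,\ldots,g_i)<\epsilon$; build a Furstenberg correspondence to a $\Z^d$-measure-preserving system in which the multiple recurrence average along the polynomials $\{P_j(y)\vv_j\}$ is at most $\epsilon$; contradict the ergodic Bergelson--Leibman theorem. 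Finally, a small technical point in your lemma as written: $h(\ux+P_j(y)\vv_j)$ is undefined when $\ux$ is near the boundary of $[M_\ast]^d$, so the $\ux$-average must be taken over an interior sub-box (or $h$ extended by zero, or the box replaced by a torus); this also slightly complicates the Fubini step relating the box averages to $\La_{\mP,V}$.
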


\medskip

Now it is easy to prove our main result.

\begin{proof}[Proof of Theorem 1.] Given $A\subs \PP_{N'}^d$ with $|A|\geq \de\,|\PP_{N'}|^d$ we choose $W$, $N=[N/W]$,$\ub$ and the function $f_A$ as in \eqref{2.1}-{2.2}, then by \eqref{2.3}-\eqref{2.5} we have 

\[
\EE_{\ux\in X}\, f_A(\ux)\geq \de',\quad 0\leq f_A(\ux) \leq\nud (\ux),
\]
\medskip

with $\de'= c_0\de /4$. Then by Corollary \ref{cor4.2} with $\eps>0$ sufficiently small with respect to $c(\de')$, we have 
$f_A=g + \fh$ with $\|\fh\|_{\Box_{H,M}(\QQ)}
\leq \eps$, thus Corollary 3.1:

\[
\La_{\mP,V}(f_A,\ldots,f_A) \geq \La_{\mP,V}(g,\ldots,g)-
\eps^c-o(1) \geq c(\de')-\eps^c-o(1)\geq 
c(\de')/2-o(1)>0,
\]
\medskip

as long as $L\geq L(\mP,V)$ and $N\geq N(\mP,V,\de)$ is sufficiently large. 
\end{proof}

\section*{Acknowledgement} This research was performed while the
second author was a guest researcher at the HUN-REN Alfr\'{e}d R\'{e}nyi
Institute of Mathematics in the frame of the MTA Distinguished Guest
Fellow Scientist Program 2024 of the Hungarian Academy of Sciences, and was supported by the Hungarian National Research Developments and Innovation Office, NKFIH, Excellence 151341. 


\newpage

\bigskip

{\small
\noindent
Andrew Lott\\
The University of Georgia\\
Athens, GA 30602, United States\\
e-mail: Andrew.Lott@uga.edu\\
\\
\'Akos Magyar\\
HUN-REN Alfr\'ed R\'enyi Institute of Mathematics \\
Budapest, Re\'altanoda u. 13--15, H-1053 Hungary\\
and\\
The University of Georgia\\
Athens, GA 30602, United States\\
e-mail: amagyar@uga.edu\\
\\
Giorgis Petridis\\
The University of Georgia\\
Athens, GA 30602, United States\\
email: petridis@uga.edu\\
\\
J\'anos Pintz\\
HUN-REN Alfr\'ed R\'enyi Institute of Mathematics \\
Budapest, Re\'altanoda u. 13--15, 
H-1053 Hungary\\
e-mail: pintz@renyi.hu}


\begin{thebibliography}{99}

\bibitem{BH62} \textsc{P. T. Bateman, R. A. Horn}
\newblock{A heuristic asymptotic formula concerning the distribution of prime numbers.}
\newblock{Math. Comp., 16 (1962), 363–367}

\bibitem{BL96} \textsc{V. Bergelson, A. Leibman}.
\newblock {Polynomial extensions of van der Waerden and Szemer{\'e}di's theorems.}
\newblock { J. Amer.  Math. Soc. {\bf 9} (1996), 725--753.}

\bibitem{CFZ15} {\sc D. Conlon, J. Fox, Y. Zhao}, 
\newblock {A relative Szemeredi theorem}
\newblock {Geometric and Functional Analysis 25.3 (2015): 733-762.}



\bibitem{CM12} {\sc B. Cook, \'A.  Magyar.}
\newblock {Cook, Brian, and Akos Magyar},
\newblock{Constellations in $\PP^d$} 
\newblock {International Mathematics Research Notices 2012.12 (2012): 2794-2816.}

\bibitem{CMT18} \textsc{B. Cook, Á. Magyar, T. Titichetrakun}
\newblock{A multidimensional Szemerédi theorem in the primes via combinatorics.} \newblock{Annals of Combinatorics 22 (2018): 711-768.}

\bibitem{GPY09} \textsc{D. Goldston, J. Pintz, Yildirim}
\newblock {Primes in tuples I}
\newblock {Ann. of Math. 170 (2009): 819-862}

\bibitem{FZ15} \textsc{J. Fox, Y. Zhao}
\newblock{A short proof of the multidimensional Szemerédi theorem in the primes.} \newblock{American Journal of Mathematics 137, no. 4 (2015): 1139-1145.}

\bibitem{Gow10} \textsc{W. T. Gowers}
\newblock{Decompositions, approximate structure, transference, and the Hahn–Banach theorem.}
\newblock{Bulletin of the London Mathematical Society 42, no. 4 (2010): 573-606.}

\bibitem{GT06} \textsc{B. Green, T. Tao}
\newblock{The primes contain arbitrarily long arithmetic progressions.}
\newblock{Annals of mathematics (2008): 481-547.}

\bibitem{Kuca24} \textsc{B. Kuca}
\newblock{Multidimensional polynomial Szemerédi theorem in finite fields for polynomials of distinct degrees.}
\newblock{Israel Journal of Mathematics 259, no. 2 (2024): 589-620.}

\bibitem{KKL24a} \textsc{N. Kravitz, B. Kuca, J. Leng"}
\newblock{Quantitative concatenation for polynomial box norms.}
\newblock{arXiv preprint arXiv:2407.08636 (2024).}

\bibitem{KKL24b} \textsc{N. Kravitz, B. Kuca, J. Leng"}
\newblock{Corners with polynomial side length.}
\newblock{arXiv preprint arXiv:2407.08637 (2024).}

\bibitem{KMPW24} \textsc{D. Kosz, Dariusz, M. Mirek, S. Peluse, J. Wright}
\newblock{The multilinear circle method and a question of Bergelson.} \newblock{arXiv preprint arXiv:2411.09478 (2024)}

\bibitem{MTW24} \textsc{L. Matthiesen, J. Teräväinen, M. Wang} \newblock{Quantitative asymptotics for polynomial patterns in the primes.}
\newblock{arXiv preprint arXiv:2405.12190 (2024)}

\bibitem{Pel20} \textsc{S. Peluse}
\newblock{Bounds for sets with no polynomial progressions.}
\newblock{Forum of Mathematics, Pi, vol.8 pp. 1-53}
\newblock{Cambridge University Press (2020)}

\bibitem{PPS24} \textsc{S. Peluse, S. Prendiville, X. Shao}
\newblock{Bounds in a popular multidimensional nonlinear Roth theorem.}
\newblock{Journal of the London Mathematical Society 110, no. 5 (2024): e70019.}

\bibitem{RTTV08} \textsc{O. Reingold, L. Trevisan, M. Tulsiani and S. Vadhan}
\newblock{Dense subsets of pseudorandom sets.}
\newblock{Electronic Colloquium on Computational Complexity, Report TR08-045 (2008).}

\bibitem{Tao06} \textsc{T. Tao}
\newblock{The Gaussian primes contain arbitrarily shaped constellations.}
\newblock{Journal d’Analyse Mathématique 99 (2006): 109-176.}

\bibitem{TZ08} \textsc{T. Tao, T. Ziegler} 
\newblock{The primes contain arbitrarily long polynomial progressions.}
\newblock{Acta Mathematica, 201(2) (2008): 213-305.}

\bibitem{TZ14} \textsc{T. Tao, T. Ziegler} 
\newblock{Narrow progressions in the primes.}
\newblock{Analytic Number Theory: In Honor of Helmut Maier’s 60th Birthday (2015): 357-379.}

\bibitem{TZ15} \textsc{T. Tao, T. Ziegler} 
\newblock{A multi-dimensional Szemerédi theorem for the primes via a correspondence principle.}
\newblock{Israel Journal of mathematics 207, no. 1 (2015): 203-228.}

\bibitem{TZ18} \textsc{T. Tao, T. Ziegler} 
\newblock{Polynomial patterns in the primes.}
\newblock{In Forum of Mathematics, Pi, v.6/1 (2018).}
\newblock{Cambridge University Press}


\end{thebibliography}
\end{document}